\newcommand{\Le}{\mathcal{L}}
\DeclareMathOperator{\comp}{Comp}
\DeclareMathOperator{\incomp}{Incomp}
\DeclareMathOperator{\spread}{Spread}
\newcommand{\1}{\mathbf{1}}
\newcommand{\relmiddle}[1]{\mathrel{}\middle#1\mathrel{}}
\DeclareMathOperator{\dist}{dist}
\DeclareMathOperator{\supp}{supp}
\newcommand{\Prob}{\mathbb{P}}
\newcommand{\E}{\mathbb{E}}
\newcommand{\eps}{\varepsilon}
\def\R{\mathbb{R}}
\theoremstyle{plain}
  \newtheorem{theorem}{Theorem}[section]
  \newtheorem{conjecture}[theorem]{Conjecture}
  \newtheorem{lemma}[theorem]{Lemma}
  \newtheorem{corollary}[theorem]{Corollary}
  \newtheorem{proposition}[theorem]{Proposition}
\theoremstyle{definition}
  \newtheorem{definition}[theorem]{Definition}
  \newtheorem{assumption}[theorem]{Assumption}
\theoremstyle{remark}
  \newtheorem{remark}[theorem]{Remark}
\begin{document}
\title[On a conjecture concerning controllable random graphs]{On a conjecture of Godsil concerning controllable random graphs} 

\author{Sean O'Rourke}
\address{Department of Mathematics, University of Colorado at Boulder, Boulder, CO 80309 }
\email{sean.d.orourke@colorado.edu}

\author{Behrouz Touri}
\address{Department of Electrical, Computer, and Energy Engineering, University of Colorado at Boulder, Boulder, CO 80309}
\email{behrouz.touri@colorado.edu}

\begin{abstract}
It is conjectured by Godsil \cite{G} that the relative number of controllable graphs compared to the total number of simple graphs on $n$ vertices approaches one as $n$ tends to infinity.  We prove that this conjecture is true.  More generally, our methods show that the linear system formed from the pair $(W,b)$ is controllable for a large class of Wigner random matrices $W$ and deterministic vectors $b$.  The proof relies on recent advances in Littlewood--Offord theory developed by Rudelson and Vershynin \cite{RVnogaps, RVlo, Vsym}.  
\end{abstract}

\maketitle

\section{Introduction}

This paper is devoted to the study of controllability properties of linear systems and graphs.  Although controllability of linear systems is a well-developed subject, the topic has regained a great deal of attention in recent years due to its application in networked systems and distributed control \cite{gu2015controllability,barabasi}.  In particular, many researchers have provided conditions on controllability of specific linear systems with a given set of parameters.  These works include \cite{ bahman,AG2,chapman,G,magnus2,marzieh,marzieh2,alex,francesco,RME,T,jorge} where controllability properties---including controllability, minimal controllability, and minimum energy control---of linear and bilinear systems were studied. 

Motivated by these works, we study controllability properties of random systems and random graphs. Specifically, we consider linear systems whose parameters (as far as controllability is concerned) are random. We confirm a common belief that ``most systems are highly controllable'' even when one deals with systems of a very discrete nature.  For the purposes of this note, we define controllability in terms of Kalman's rank condition \cite{K2, K3, K, Klec}.  

\begin{definition}[Controllable]
Let $A$ be an $n \times n$ matrix, and let $b$ be a vector in $\R^n$.  We say the pair $(A,b)$ is \emph{controllable} if the $n \times n$ matrix
\begin{equation} \label{eq:Ab}
	\begin{pmatrix} b & Ab & \cdots & A^{n-1}b \end{pmatrix}
\end{equation}
has full rank (that is, rank $n$).  Here, the matrix in \eqref{eq:Ab} is the matrix with columns $b, Ab, \ldots, A^{n-1} b$.  We say the pair $(A,b)$ is \emph{uncontrollable} if it is not controllable.  
\end{definition}

Of particular importance is the case when $A$ is the adjacency matrix of a graph.  Recall that $G(n,p)$ is the Erd\"os--R\'enyi random graph on $n$ vertices with edge density $p$.  That is, $G(n,p)$ is a simple graph on $n$ vertices (which we shall label as $1, \ldots, n$) such that each edge $\{i,j\}$ is in $G(n,p)$ with probability $p$, independent of other edges.  In the special case when $p=1/2$, one can view $G(n,1/2)$ as random graph selected uniformly among all $2^{\binom{n}{2}}$ simple graphs on $n$ vertices.  We let $A_n$ be the zero-one adjacency matrix of $G(n,p)$.  

In this note, we verify the following conjecture due to Godsil \cite{G}.  

\begin{conjecture}[Godsil \cite{G}] \label{conj:main}
As $n \to \infty$, the probability that $(A_n, \1_n)$ is controllable approaches one, where $A_n$ is the adjacency matrix of $G(n,1/2)$ and $\1_n$ is the all-ones vector in $\mathbb{R}^n$.  
\end{conjecture}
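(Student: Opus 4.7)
My plan is to reduce controllability of $(A_n,\1_n)$ to two spectral conditions and verify each with high probability. If $A_n = \sum_i \lambda_i v_i v_i^T$ denotes the spectral decomposition, a direct Vandermonde-type computation gives
\[ \det\begin{pmatrix} \1_n & A_n \1_n & \cdots & A_n^{n-1}\1_n \end{pmatrix} = \pm \prod_{i<j}(\lambda_j - \lambda_i) \prod_i \langle v_i, \1_n\rangle, \]
so $(A_n,\1_n)$ is controllable iff (a) the eigenvalues $\lambda_1,\dots,\lambda_n$ are all distinct and (b) $\langle v_i, \1_n\rangle \ne 0$ for every $i$. Condition (a) is known to hold with probability $1-o(1)$ for the adjacency matrix of $G(n,1/2)$: applying Vershynin's smallest singular value estimates for shifted symmetric Bernoulli matrices \cite{Vsym} on a fine net of candidate double eigenvalues $\lambda$, together with $\|A_n\| = O(\sqrt n)$ with high probability, yields even a quantitative spectral gap $\min_{i\ne j}|\lambda_i - \lambda_j| \ge n^{-C}$ for some absolute constant $C>0$.

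The real work is (b). I would recast it via the Popov--Belevitch--Hautus test: some eigenvector of $A_n$ is orthogonal to $\1_n$ iff the augmented $(n+1)\times n$ matrix
\[ M_\lambda := \begin{pmatrix} A_n - \lambda I \\ \1_n^T \end{pmatrix} \]
satisfies $s_{\min}(M_\lambda) = 0$ for some $\lambda \in \R$. Since $\lambda \mapsto s_{\min}(M_\lambda)$ is $1$-Lipschitz in the operator norm and $\|A_n\| \le C\sqrt n$ with high probability, it suffices to show $s_{\min}(M_\lambda) > \eps$ for every $\lambda$ in an $\eps$-net of the interval $[-C\sqrt n, C\sqrt n]$. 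Choosing $\eps = n^{-A}$ for a sufficiently large constant $A$, the net has size $O(n^{A+1/2})$, and a union bound reduces everything to a per-$\lambda$ invertibility estimate.

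The expected main obstacle is this per-$\lambda$ estimate: for each fixed $\lambda \in \R$, establish $\Prob(s_{\min}(M_\lambda) \le n^{-A}) \le n^{-A-1}$ (say). Here $A_n - \lambda I$ is symmetric with independent above-diagonal Bernoulli entries, and $M_\lambda$ differs from it only by the deterministic appended row $\1_n^T$. The strategy is to follow the Rudelson--Vershynin framework adapted to this symmetric/augmented setting: partition the unit sphere into \emph{compressible} (nearly sparse) and \emph{incompressible} (spread-out) vectors. The compressible case is dispatched by a deterministic net argument combined with Chernoff-type concentration to show no such vector is nearly annihilated by $M_\lambda$. For incompressible $v$, apply the vector-valued Littlewood--Offord inequality of \cite{RVlo} to a typical row of $A_n - \lambda I$ against $v$; the resulting small-ball estimates combine across rows, with the symmetry-induced dependence handled by the decoupling approach of \cite{Vsym}. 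The no-gaps delocalization of \cite{RVnogaps} is then invoked to rule out intermediate ``moderately localized'' approximate null vectors of $M_\lambda$ that would slip through the compressible/incompressible dichotomy. Combining the per-$\lambda$ bound with the union bound over the $\lambda$-net completes the proof.
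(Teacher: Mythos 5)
Your reduction via the Vandermonde determinant is correct and equivalent to the PBH reduction the paper uses (Lemma~\ref{lemma:PBHtest} together with simplicity of the spectrum and the rank-one equivalence in Lemma~\ref{lemma:PBHtest2}). Your observation that the appended row $\1_n^T$ in $M_\lambda$ automatically controls the rank-one mean shift $\frac12\1_n\1_n^T$ (because any near-kernel vector of $M_\lambda$ must be nearly orthogonal to $\1_n$) is also a nice way to sidestep the centering issue that the paper resolves with Lemma~\ref{lemma:PBHtest2}. However, the heart of your argument---the union bound over a polynomially fine $\lambda$-net combined with a per-$\lambda$ estimate $\Prob(s_{\min}(M_\lambda)\le n^{-A})\le n^{-A-1}$---has a genuine gap that I do not think your sketch closes.

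The issue is quantitative. The known smallest-singular-value bounds for symmetric random matrices (e.g.\ Vershynin \cite{Vsym}) give $\Prob\bigl(s_{\min}(A_n-\lambda I)\le \eps n^{-1/2}\bigr)\lesssim \eps^{1/9}+e^{-n^c}$, which for $\eps=n^{1/2-A}$ yields a failure probability of order $n^{-(A-1/2)/9}$. This is far too large to beat the net size $\sim n^{A+1/2}$: the two exponents move in the same direction as you refine the net, so the union bound diverges. The entire saving must therefore come from the extra deterministic row $\1_n^T$, i.e.\ from the fact that the near-kernel vector must simultaneously be nearly orthogonal to $\1_n$. But in the Rudelson--Vershynin incompressible-vector framework this constraint is deterministic (it restricts the net of candidate $x$'s, it does not shrink the per-$x$ anti-concentration probability), and you do not explain how it yields the extra polynomial factor you need. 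Informally you would need a statement of the form ``given that $\lambda$ is near an eigenvalue, the corresponding eigenvector is unlikely to be near-orthogonal to $\1_n$,'' and proving that cleanly requires precisely the apparatus the paper develops: conditioning on the matrix, observing (via Lemmas~\ref{lemma:complower} and Theorem~\ref{thm:structure}) that eigenvectors are incompressible with large regularized LCD, and then---crucially---injecting \emph{fresh} randomness via the symmetrization $v\mapsto(\psi_i v_i)$ of Lemma~\ref{lemma:symmetry}, which is what allows a Littlewood--Offord estimate to be applied against the deterministic vector $\1_n$. None of this appears in your sketch. The paper's union bound runs only over the $n$ eigenvectors (after establishing simple spectrum), which is why its per-eigenvector bound $n^{-\alpha}$ suffices; your $\lambda$-net union bound is structurally harder and, as written, does not close.

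A smaller point: ruling out repeated eigenvalues by bounding $s_{\min}(A_n-\lambda I)$ on a $\lambda$-net cannot work as stated, since a repeated eigenvalue at $\lambda$ forces $s_{n-1}(A_n-\lambda I)=0$, not merely $s_n(A_n-\lambda I)=0$; Vershynin's theorem controls only the smallest singular value. The paper instead cites the Tao--Vu simple-spectrum theorem directly (Theorem~\ref{thm:simple}), which is the right move here.
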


\begin{remark}
Following Godsil \cite{G}, one can define a simple graph $G$ on $n$ vertices to be \emph{controllable} if the pair $(A, \1_n)$ is controllable, where $A$ is the adjacency matrix of $G$ and $\1_n$ is the all-ones vector.  In this language, Conjecture \ref{conj:main} states that the relative number of controllable graphs compared to the total number of simple graphs on $n$ vertices approaches one as $n$ tends to infinity.     
\end{remark}

We will prove the following slightly-stronger version of Conjecture \ref{conj:main}.  

\begin{theorem} \label{thm:Gn12}
Let $A_n$ be the adjacency matrix of $G(n,1/2)$.  Then, for any $\alpha > 0$, there exists $C > 0$ such that $(A_n, \1_n)$ is controllable with probability at least $1 - C n^{-\alpha}$.  
\end{theorem}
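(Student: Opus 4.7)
\textit{Reduction.} Write $A_n = \tfrac12(J_n - I_n) + \tfrac12 \Xi_n$, where $J_n$ is the all-ones matrix and $\Xi_n$ is the symmetric random matrix with zero diagonal and i.i.d.\ Rademacher entries $\pm 1$ above the diagonal. Since $J_n v = 0$ for any $v \perp \mathbf{1}_n$, the equation $A_n v = \lambda v$ restricted to $v \perp \mathbf{1}_n$ is equivalent to $\Xi_n v = (2\lambda+1)v$. Hence $(A_n, \mathbf{1}_n)$ is uncontrollable if and only if $(\Xi_n, \mathbf{1}_n)$ is, and I would work with the mean-zero matrix $\Xi_n$ from here on. A short computation yields the factorization
\[
\det\bigl[\mathbf{1}_n,\, \Xi_n \mathbf{1}_n,\, \ldots,\, \Xi_n^{n-1}\mathbf{1}_n\bigr] \;=\; \pm \prod_{i=1}^n (u_i^T \mathbf{1}_n) \prod_{i<j}(\mu_j - \mu_i),
\]
where $\mu_i, u_i$ are the eigenvalues and unit eigenvectors of $\Xi_n$. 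Thus uncontrollability is equivalent to $\Xi_n$ having a repeated eigenvalue or an eigenvector orthogonal to $\mathbf{1}_n$. Spectral simplicity of symmetric Rademacher matrices is known to hold with probability $1 - O(n^{-C})$ for every $C > 0$ (via Nguyen--Tao--Vu-type bounds), disposing of the first event.

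\textit{Main step.} The remaining event---that some unit eigenvector $x$ of $\Xi_n$ with eigenvalue $\mu$ satisfies $\mathbf{1}_n^T x = 0$---is equivalent to the least singular value of the $(n+1)\times n$ augmented matrix $B_n(\mu) := \begin{pmatrix} \Xi_n - \mu I \\ \mathbf{1}_n^T \end{pmatrix}$ vanishing for some $\mu \in \R$. To bound the probability of this event by $n^{-\alpha}$, I would follow the Rudelson--Vershynin strategy for least singular values of symmetric random matrices \cite{Vsym}: decompose $S^{n-1}$ into compressible and incompressible vectors. Compressible $x$ are handled by a standard $\varepsilon$-net argument combined with the operator-norm bound $\|\Xi_n\| = O(\sqrt n)$. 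For incompressible $x$, the no-gaps delocalization theorem of \cite{RVnogaps} ensures that $x$ has a large essential LCD, so the inverse Littlewood--Offord inequality of \cite{RVlo} gives polynomially small anti-concentration for $\|B_n(\mu) x\|$ at each fixed $\mu$. The extra deterministic row $\mathbf{1}_n^T$ of $B_n(\mu)$ additionally pins $\mathbf{1}_n^T x$ close to $0$, consistent with the delocalization regime. Restricting $|\mu|\leq C\sqrt n$ by the norm bound and using a polynomial-size $\mu$-net yields uniformity in $\mu$.

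\textit{Main obstacle.} The most delicate point is calibrating the inverse Littlewood--Offord estimate finely enough to survive the union bound over the $\mu$-net: a naive Lipschitz-in-$\mu$ argument would require $\P(s_{\min}(B_n(\mu_0))\leq \varepsilon) \ll \varepsilon \cdot n^{-1}$ at each fixed $\mu_0$, which is sharper than what \cite{Vsym} produces directly. To circumvent this I would use a leave-one-out decomposition: conditioning on the $(n-1)\times(n-1)$ principal minor $\Xi_n^{(n)}$, the last row/column of $\Xi_n$ is an independent Rademacher vector $\eta$, and the block-matrix resolvent formula reduces the bad event to the existence of $\mu$ with $\eta^T (\mu I - \Xi_n^{(n)})^{-1}\mathbf{1}_{n-1} = -1$ (together with the secular equation determining $\mu$). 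Per-eigenvalue inverse Littlewood--Offord bounds on this linear form in $\eta$---valid because the resolvent vector $(\mu I - \Xi_n^{(n)})^{-1}\mathbf{1}_{n-1}$ inherits a large LCD from delocalization of the eigenvectors of $\Xi_n^{(n)}$ and isotropic local laws---then give the desired bound on a per-eigenvalue basis, avoiding the Lipschitz-net penalty entirely.
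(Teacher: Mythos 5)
Your reduction to the mean-zero Rademacher matrix $\Xi_n$, the Vandermonde factorization of the controllability determinant, and the dichotomy ``repeated eigenvalue or eigenvector $\perp \mathbf{1}_n$'' are all correct and essentially parallel what the paper does (the paper uses the PBH test plus a rank-one shift via its Lemma on $(A+\gamma bb^{\mathrm T},b)$; your decomposition of $A_n$ differs harmlessly in the diagonal). Spectral simplicity via Tao--Vu also matches. The gap is in the ``main step'' and your proposed workaround.

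The real obstruction is not the $\mu$-net penalty you single out. In the Rudelson--Vershynin framework the per-point small-ball estimate for $\|(W-\mu)x\|\le t\sqrt n$ is \emph{exponentially} small (tensorization over $\sim n$ independent rows), so union bounding over a polynomial-size net in $\mu$ together with the exponential-size net in $x$ is unproblematic; the paper's Theorem on eigenvector structure achieves a bound of order $n^{-c'n}$ precisely this way. The genuine difficulty is elsewhere: after you know the eigenvector $v$ has large regularized LCD, you still need to bound $\P(\mathbf 1_n^{\mathrm T}v=0)$, and there is \emph{no fresh randomness} left --- the row $\mathbf 1_n^{\mathrm T}$ of your augmented matrix $B_n(\mu)$ is deterministic and contributes nothing to anticoncentration, so $B_n(\mu)$ is morally just $\Xi_n - \mu I$ restricted to the hyperplane $\mathbf 1_n^\perp$.

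Your leave-one-out remedy does try to extract fresh randomness from the last column $\eta$, and the block identity $\mathbf 1_{n-1}^{\mathrm T}(\mu I-\Xi_n^{(n)})^{-1}\eta=-1$ is correctly derived. But here $\mu$ is an eigenvalue of $\Xi_n$, hence a solution of the secular equation $\eta^{\mathrm T}(\mu I-\Xi_n^{(n)})^{-1}\eta=\mu$: it is a (highly nonlinear) function of $\eta$ itself. The quantity $\mathbf 1_{n-1}^{\mathrm T}(\mu(\eta)I-\Xi_n^{(n)})^{-1}\eta$ is therefore not a linear form in $\eta$ with frozen coefficients, and the Littlewood--Offord inequalities you invoke do not apply to it. You would need either to decouple $\mu$ from $\eta$ (which reintroduces the $\mu$-net problem, now with resolvents that are large near eigenvalues of $\Xi_n^{(n)}$), or to import much heavier machinery (rigidity/local-law control of the resolvent plus eigenvalue gaps) than the sketch indicates. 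As written this step does not close.

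What the paper does instead is a one-line trick you have all the ingredients for but did not use: since the off-diagonal atom variable is symmetric, $W$ and $\Psi W\Psi$ (with $\Psi=\operatorname{diag}(\psi_1,\dots,\psi_n)$ an independent Rademacher diagonal) are equal in distribution, and eigenvectors transform as $v\mapsto\Psi v$. Hence, conditionally on $W$, $\mathbf 1_n^{\mathrm T}v$ has the same distribution as $\sum_k\psi_k v_k$ --- a genuine Rademacher sum with frozen coefficients $v_k$ --- and the LCD-based Littlewood--Offord bound applies directly. This is the missing ingredient; without it (or a correct replacement) your argument does not yield the polynomial bound.
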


More generally, we also consider random graphs with loops.  We define the random graph $G(n,p,q)$ as follows.  Begin with the simple Erd\"os--R\'enyi random graph $G(n,p)$.  A loop is then added at each vertex with probability $q$, independent of all other choices.  Hence, $G(n,p,0)$ is the simple graph $G(n,p)$.  We prove the following result for $G(n,1/2, q)$.  

\begin{theorem} \label{thm:Gn12q}
Let $A_n$ be the adjacency matrix of $G(n,1/2, q)$ for some $0 \leq q \leq 1$.  Then, for any $\alpha > 0$, there exists $C > 0$ such that $(A_n, \1_n)$ is controllable with probability at least $1 - C n^{-\alpha}$.  
\end{theorem}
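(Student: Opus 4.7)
\medskip
\noindent\textbf{Proof plan.}

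\emph{Step 1 (Reduction to a Wigner matrix).} Controllability of $(A_n, \1_n)$ is invariant under the substitution $A_n \mapsto A_n + \alpha\1_n\1_n^T + \beta I_n$ for any $\alpha, \beta \in \R$, since $\1_n\1_n^T v = (\1_n^T v)\1_n$ and $I_n v = v$ keep the Krylov subspace of $(A_n, \1_n)$ invariant. Applied with $\alpha = -\tfrac{1}{2}$ and $\beta = \tfrac{1}{2}-q$, this replaces $A_n$ by
\[
W_n := A_n - \tfrac{1}{2}\bigl(\1_n\1_n^T - I_n\bigr) - q I_n,
\]
a symmetric random matrix whose upper-triangular and diagonal entries are jointly independent, centered, and uniformly bounded---a Wigner matrix in the setting of \cite{RVnogaps, Vsym}. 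Orthogonally diagonalizing $W_n = UDU^T$, the Krylov matrix $[\1_n, W_n\1_n, \ldots, W_n^{n-1}\1_n]$ factors as $U \cdot \diag(U^T\1_n) \cdot V$ with $V$ Vandermonde in the eigenvalues, hence has full rank if and only if no eigenvector of $W_n$ is orthogonal to $\1_n$. This single condition subsumes simple spectrum, since any eigenspace of dimension $\geq 2$ automatically contains a vector orthogonal to $\1_n$.

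\emph{Step 2 (Row exposure).} By no-gaps delocalization \cite{RVnogaps}, with probability $1 - O(n^{-\alpha})$ every coordinate of every unit eigenvector of $W_n$ is nonzero; after a union bound over the distinguished coordinate, it suffices to rule out the existence of $(\lambda, v)$ with $W_n v = \lambda v$, $\langle v, \1_n\rangle = 0$, and $v_n = 1$. Write $v = (\bar v, 1)^T$ and condition on the $(n-1) \times (n-1)$ principal submatrix $W_{n-1}$ (obtained by deleting row and column $n$) and on the diagonal entry $(W_n)_{nn}$, treating the last column $X \in \R^{n-1}$ as the only remaining randomness. On the event $\lambda \notin \sigma(W_{n-1})$, the eigenequation forces $\bar v = -(W_{n-1} - \lambda I_{n-1})^{-1}X$, so $\1_n^T v = 0$ becomes the linear constraint
\[
\langle w(\lambda), X\rangle = -1, \qquad w(\lambda) := (W_{n-1} - \lambda I_{n-1})^{-1}\1_{n-1}.
\]
By Cauchy interlacing, the candidate values of $\lambda$ are confined to the $n$ intervals determined by $\sigma(W_{n-1})$.

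\emph{Step 3 (Anti-concentration, and the main obstacle).} For each fixed $\lambda$, the small-ball machinery of \cite{RVlo, Vsym} bounds $\P\bigl(\langle w(\lambda), X\rangle = -1 \,\big|\, W_{n-1}\bigr)$ by the reciprocal of the essential least common denominator of $w(\lambda)$, with the discrete coordinates of $X$ entering through the standard Littlewood--Offord inequality. The heart of the argument is to show that, with probability $1 - O(n^{-\alpha})$ over $W_{n-1}$, the resolvent vector $w(\lambda)$ has super-polynomial LCD simultaneously for every candidate $\lambda$; a net argument in $\lambda$ (justified by the interlacing constraint) then yields the claimed $1 - Cn^{-\alpha}$ bound. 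The main obstacle lies in the near-spectrum regime: when $\lambda$ approaches an eigenvalue $\mu_j$ of $W_{n-1}$, the vector $w(\lambda)$ is nearly parallel to the $\mu_j$-eigenvector of $W_{n-1}$, and its structurelessness must therefore be inherited from the eigenvectors themselves---precisely the content of the no-gaps delocalization of \cite{RVnogaps} and the symmetrization arguments of \cite{Vsym}. The parameter $q$ enters only through the scalar shift in Step 1, so the same argument establishes Theorems \ref{thm:Gn12} and \ref{thm:Gn12q}, and hence Conjecture \ref{conj:main}.
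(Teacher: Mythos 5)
Your Step 1 reduction is correct and aligns with the paper's (the paper instead writes $A_n \overset{d}{=}\tfrac12(W+\1_n\1_n^{\mathrm T})$ and invokes a PBH-equivalence, Lemma~\ref{lemma:PBHtest2}; your Krylov-subspace invariance argument is a valid alternative, and your observation that ``no eigenvector $\perp \1_n$'' subsumes simple spectrum matches Lemma~\ref{lemma:PBHtest}). From Step~2 onward, however, you diverge from the paper's route, and the divergence is where the gaps appear.

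The paper never conditions on a principal submatrix and never touches resolvents. It works with the eigenvector $v$ of the full matrix $W$ directly: it shows $v$ is incompressible (Lemma~\ref{lemma:complower}), that its regularized LCD exceeds $n^\alpha$ with very high probability (Theorem~\ref{thm:structure}), and then---this is the step your plan is missing an analogue of---it introduces fresh randomness into the otherwise deterministic quantity $v^{\mathrm T}\1_n$ via a \emph{symmetrization trick} (Lemma~\ref{lemma:symmetry}): because the off-diagonal atom variable is symmetric, the vector $(\psi_1 v_1,\ldots,\psi_n v_n)$ with independent Rademacher $\psi_i$ is an eigenvector of an equidistributed Wigner matrix, so one may average over the $\psi_i$ and run a Littlewood--Offord bound in that randomness. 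Your Step~2 tries to manufacture randomness differently, by exposing the last column $X$ of $W$ and reducing to $\langle w(\lambda),X\rangle=\text{const}$ with $w(\lambda)=(W_{n-1}-\lambda)^{-1}\1_{n-1}$. That reduction is formally fine, but Step~3, as you write it, is a statement of the problem rather than a proof of it: the claim that $w(\lambda)$ has super-polynomial LCD \emph{simultaneously for all candidate $\lambda$} is exactly the hard part, and your proposal does not establish it. Three concrete obstacles: (i) $\lambda$ is not a free parameter you can union-bound over---the last scalar equation $-X^{\mathrm T}(W_{n-1}-\lambda)^{-1}X+(W_n)_{nn}=\lambda$ couples $\lambda$ to the very $X$ you are using as the randomness, so a naive $\lambda$-net plus union bound is not justified without an argument untangling this dependence; (ii) near the spectrum of $W_{n-1}$ the vector $w(\lambda)$ blows up and becomes nearly parallel to a single eigenvector of $W_{n-1}$, so any net in $\lambda$ must become arbitrarily fine exactly where the anti-concentration you need is hardest, and the LCD bound you would need there reduces to an LCD bound for the eigenvectors of $W_{n-1}$, i.e.\ to a statement essentially equivalent to Theorem~\ref{thm:structure}, which your plan does not prove; (iii) in the far-from-spectrum regime $w(\lambda)$ is a mixture of many eigenvectors and it is not clear that structurelessness of the individual eigenvectors passes to the mixture. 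The paper's route avoids (i)--(iii) entirely by never introducing $\lambda$ or a resolvent: it proves the eigenvector LCD bound once, then gets the needed randomness from the symmetrization trick rather than from a conditioned column. Your plan could conceivably be completed, but as written it identifies the crux and then cites it rather than proving it.
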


\subsection{Overview and outline}
Theorems \ref{thm:Gn12} and \ref{thm:Gn12q} are related to a large body of work on controllability and structural controllability of network control systems.  One of the primary focal points of previous works has been controllability criteria involving the topology and symmetry of the network.  We refer the interested reader to \cite{bahman,AG2,chapman,G,barabasi,magnus2,marzieh,marzieh2,RME,T} and references therein.  

Unlike many of the previous works on the subject, the approach taken in this note utilizes mostly stochastic methods.  Specifically, we will exploit advances in Littlewood--Offord theory recently developed by Rudelson and Vershynin \cite{RVnogaps, RVlo, Vsym}.  The authors' previous work \cite{OT} also used a stochastic approach to study controllability properties of random systems and was based on recent advances by Tao and Vu \cite{TVsimple} and Nguyen, Tao, and Vu \cite{NTV} concerning gaps between eigenvalues of random matrices. There it is shown that the controllability and minimal controllability of systems is a generic property, even for systems of a very discrete nature. 

Clearly, Theorem \ref{thm:Gn12} (and hence Conjecture \ref{conj:main}) follows from Theorem \ref{thm:Gn12q} (by taking $q =0$).  Theorem \ref{thm:Gn12q} will turn out to be a corollary of a more general result for Wigner random matrices.  In Section \ref{sec:wigner} we define the class of Wigner random matrices we will study.  In Section \ref{sec:results}, we state our main results for these Wigner matrices as well as a number of corollaries.  We then use these results to prove Theorem \ref{thm:Gn12q} in Section \ref{sec:proof}.  The rest of the paper is devoted to the proof of our results for Wigner matrices.  

\subsection{Notation}
We collect here a list of notation that will be used in the sequel.  For a vector $u$, $\|u\|$ denotes the Euclidean norm.  $\1_n$ is the all-ones vector in $\mathbb{R}^n$.  We let $u \cdot v = u^\mathrm{T} v$ denote the dot product of $u, v \in \mathbb{R}^n$.  

For a matrix $A$, $\|A\|$ denotes the spectral norm of $A$.  $A^\mathrm{T}$ is the transpose of $A$, and $A^\ast$ denotes the conjugate transpose of $A$.

We let $[n]$ denote the discrete interval $\{1, \ldots, n\}$.  $S^{n-1}$ is the unit sphere in $\mathbb{R}^n$.  

For a set $S$, $|S|$ denotes the cardinality of $S$, and $S^c$ is the complement of $S$.  Similarly, $\mathcal{E}^c$ is the complement of the event $\mathcal{E}$.  

Given a non-negative real number $z$, $\lfloor z \rfloor$ is the largest integer not greater than $z$ and $\lceil z \rceil$ is the smallest integer not less than $z$.  

We use asymptotic notation under the assumption that $n \to \infty$.  We write $o(1)$ to denote a term which tends to zero as $n \to \infty$.

\section{Wigner random matrices} \label{sec:wigner}

We consider the following class of random symmetric matrices originally introduced by Wigner \cite{W}.  

\begin{definition}[Wigner matrix]
Let $\xi$ and $\zeta$ be real random variables.  We say $W = (w_{ij})_{i,j=1}^n$ is an $n \times n$ \emph{Wigner matrix} with atom variables $\xi$ and $\zeta$ if $W$ is a real symmetric matrix whose entries satisfy the following:
\begin{itemize}
\item the entries $\{w_{ij} : 1 \leq i \leq j \leq n\}$ are independent random variables,
\item the upper triangular entries $\{w_{ij} : 1 \leq i < j \leq n\}$ are independent and identically distributed (iid) copies of $\xi$,
\item the diagonal entries $\{w_{ii} : 1 \leq i \leq n\}$ are iid copies of $\zeta$.
\end{itemize}
\end{definition}

\begin{remark}
One can similarly define complex Hermitian Wigner matrices where the upper-triangular entries are iid complex-valued random variables.  However, for the purposes of this note, we will only focus on real symmetric matrices.  
\end{remark}

Throughout the paper, we will consider various assumptions on the atom variables $\xi$ and $\zeta$.  The most general assumption will be the following.  

\begin{assumption} \label{assump:nondeg}
We say a real random variable $\xi$ satisfies Assumption \ref{assump:nondeg} if
\begin{equation} \label{eq:rvassump}
	\Prob( |\xi - \xi'| \leq \eps_0) \leq 1 - p_0, \quad \Prob(|\xi| > K_0) \leq p_0 / 4
\end{equation}
for some $\eps_0, p_0, K_0 > 0$, where $\xi'$ is an independent copy of $\xi$.  
\end{assumption}

The first bound in \eqref{eq:rvassump} is an anti-concentration inequality, which guarantees that $\xi$ is non-degenerate.  In particular, 
$$ \sup_{u \in \mathbb{R}} \Prob( |\xi - u| \leq \eps_0 ) \leq 1- p_0 \implies \Prob(|\xi - \xi'| \leq \eps_0) \leq 1- p_0 $$
by conditioning on $\xi'$.  The second bound in \eqref{eq:rvassump} controls the tail behavior of $\xi$.  

Recall that a random variable $\xi$ is called \emph{degenerate} if $\xi$ is equal to a constant, $c$, almost surely; otherwise, we say $\xi$ is \emph{non-degenerate}.  It turns out that all non-degenerate random variables satisfy Assumption \ref{assump:nondeg}.  

\begin{proposition} \label{prop:nondeg}
If $\xi$ is a non-degenerate random variable, then there exist constants $\eps_0, p_0, K_0 > 0$ such that $\xi$ satisfies \eqref{eq:rvassump}.  
\end{proposition}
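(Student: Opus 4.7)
The plan is to produce the three constants $\eps_0, p_0, K_0$ in two essentially independent steps: first pick $\eps_0$ and $p_0$ from the two-point spread of $\xi$, then inflate $K_0$ to dominate the tail at the already-fixed threshold $p_0/4$.

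First I would unpack the assumption that $\xi$ is non-degenerate. Since the distribution of $\xi$ is not a point mass, its support contains at least two distinct real numbers; equivalently, I can find $a < b$ such that for every sufficiently small $\delta > 0$ both $\Prob(\xi \in [a-\delta, a+\delta]) > 0$ and $\Prob(\xi \in [b-\delta, b+\delta]) > 0$. Shrink $\delta$ so that $\delta < (b-a)/4$ and set $p := \min\bigl(\Prob(\xi \in [a-\delta,a+\delta]),\ \Prob(\xi \in [b-\delta,b+\delta])\bigr) > 0$ and $\eps_0 := b - a - 2\delta > 0$.

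Next I would use independence of $\xi$ and $\xi'$. On the product event $\{\xi \in [a-\delta,a+\delta]\} \cap \{\xi' \in [b-\delta,b+\delta]\}$ one has $\xi' - \xi \geq (b-\delta) - (a+\delta) = b - a - 2\delta = \eps_0$, and this event has probability at least $p^2$; the symmetric product event (with the roles of $\xi$ and $\xi'$ swapped) is disjoint from it and also has probability at least $p^2$. Therefore
\[
\Prob(|\xi - \xi'| \leq \eps_0) \leq 1 - 2p^2,
\]
so the first bound in \eqref{eq:rvassump} holds with $p_0 := 2p^2 > 0$.

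Finally, I would pick $K_0$ using nothing more than the fact that $\xi$ is a real-valued random variable, so $\Prob(|\xi| > K) \to 0$ as $K \to \infty$ by continuity of probability from above on the nested events $\{|\xi| > K\}$. Choose $K_0$ large enough that $\Prob(|\xi| > K_0) \leq p_0/4$. The main conceptual (though minor) obstacle is packaging the two-point spread cleanly: one has to argue that a non-degenerate distribution genuinely places positive mass on two well-separated small intervals, rather than just on two distinct points, which is why the $\delta$-thickening step is needed before invoking independence.
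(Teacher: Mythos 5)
Your proof is correct and takes a genuinely different route from the paper. The paper argues by contradiction: assuming no valid $(\eps_0,p_0)$ exist forces $\Prob(\xi-\xi'=0)=1$, and then a characteristic-function fact ($|\E e^{2\pi i\theta\xi}|\equiv 1$ implies degeneracy) gives a contradiction. You instead give a direct, constructive argument from the definition of the support: pull out two separated small intervals of positive mass and use independence to exhibit an explicit lower bound $2p^2$ on $\Prob(|\xi-\xi'|\geq \eps_0)$. Your version is more elementary (no characteristic functions) and yields concrete constants, at the cost of a little more bookkeeping with $\delta$. The paper's is shorter to state given the cited Fourier lemma. Both handle $K_0$ identically.

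One small boundary issue to fix: on your product event you only get $\xi'-\xi\geq\eps_0$, not strictly greater, so what you have actually shown is $\Prob(|\xi-\xi'|<\eps_0)\leq 1-2p^2$, which is slightly weaker than the claimed $\Prob(|\xi-\xi'|\leq\eps_0)\leq 1-2p^2$. This is harmless: either shrink the final constant (replace $\eps_0$ by, say, $\eps_0/2$, since $\Prob(|\xi-\xi'|\leq\eps_0/2)\leq\Prob(|\xi-\xi'|<\eps_0)$), or set $\eps_0:=b-a-3\delta$ so the inequality on the product event becomes strict. With that one-line adjustment the argument is complete.
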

\begin{proof}
Let $\xi$ be a non-degenerate random variable.  We first show that there exist $\eps_0, p_0 > 0$ such that $\Prob(|\xi - \xi'| \leq \eps_0) \leq 1 - p_0$, where $\xi'$ is an independent copy of $\xi$.  Suppose to the contrary.  Then, for every $\eps, p > 0$, we have $\Prob( |\xi - \xi'| \leq \eps) \geq 1 - p$.  Taking the limits $\eps \searrow 0$ and $p \searrow 0$, we conclude that $\Prob( |\xi - \xi'| = 0) = 1$.  In other words, $\xi - \xi' = 0$ almost surely, and thus the characteristic function of $\xi$ satisfies
$$ \left| \E [ e^{i 2\pi \theta \xi} ] \right|^2 = \E[ e^{i 2 \pi \theta (\xi - \xi')} ] = 1 $$
for every $\theta \in \mathbb{R}$.  This implies that $\xi$ is degenerate (see, for instance, Theorem 1.7 in Chapter 4 of \cite{Gut} or \cite[Lemma 1.5]{Plt}), a contradiction.  Hence, there exist $\eps_0, p_0 > 0$ such that the first bound in \eqref{eq:rvassump} holds.  

For the second bound, we observe that
$$ \lim_{K \to \infty} \Prob( |\xi| > K) = 0. $$
Thus, there exists $K_0 > 0$ such that $\Prob(|\xi| > K_0) \leq p_0 / 4$, as desired.
\end{proof}

Clearly, every degenerate random variable has zero variance.  We will often make use of this fact in its contrapositive form: every random variable with non-zero variance is non-degenerate. 

While all non-degenerate random variables satisfy Assumption \ref{assump:nondeg}, the importance of this assumption will be the values of $\eps_0, p_0, K_0$.  Indeed, many of the constants appearing in our main results will depend on these values.  Let us mention a few explicit examples.  If $\xi$ is a Bernoulli random variable, which takes values $\pm 1$ with probability $1/2$, then $\xi$ satisfies \eqref{eq:rvassump} with $\eps_0 = 1/2 $, $p_0 = 1/2$, and $K_0 = 1$.  On the other hand, if $\xi$ is a standard normal random variable, then $\xi$ satisfies \eqref{eq:rvassump} with $\eps_0 = 1/2$, $p_0 = 1/2$, and $K_0 = \sqrt{8}$.  

Besides Assumption \ref{assump:nondeg}, we will often assume that the random variable $\xi$ is symmetric.  Recall that $\xi$ is a \emph{symmetric} random variable if $\xi$ and $-\xi$ have the same distribution. 

Our strongest results will also require that the atom variables $\xi$ and $\zeta$ be sub-gaussian.  

\begin{definition}[Sub-gaussian]
We say the random variable $\xi$ is \emph{sub-gaussian with sub-gaussian moment $\kappa > 0$} if
$$ \Prob( |\xi| \geq t) \leq \kappa^{-1} \exp(- \kappa t^2) $$
for all $ t > 0$.  We simply say that a random variable $\xi$ is \emph{sub-gaussian} if there exists $\kappa > 0$ such that $\xi$ is sub-gaussian with sub-gaussian moment $\kappa$.  
\end{definition} 

There are many examples of sub-gaussian random variables.  Clearly, a Gaussian random variable is sub-gaussian.  Moreover, every bounded random variable is sub-gaussian.

\section{Main results for Wigner matrices} \label{sec:results}

We now state our main results for Wigner matrices.  In particular, for a large class of Wigner matrices $W$ and vectors $b$, we show that the pair $(W,b)$ is controllable with high probability.  From this, we will deduce Theorem \ref{thm:Gn12q}.  

For our most general results, it will be useful to work on the event that the spectral norm $\|W\|$ of the $n \times n$ Wigner matrix $W$ is bounded; recall that the spectral norm $\|W\|$ is the largest singular value of $W$.  To this end, we fix $M \geq 1$ and define the event
\begin{equation} \label{eq:def:BWM}
	\mathcal{B}_{W,M} := \{ \|W\| \leq M \sqrt{n} \}. 
\end{equation}
As we will see, for many Wigner matrices $W$, the event $\mathcal{B}_{W, M}$ holds with high probability.  

We now state our main results.  To do so, we consider two distinct cases: when $b$ is deterministic and when $b$ is a random vector.  

\subsection{Results for deterministic vectors}
We consider deterministic vectors $b \in \mathbb{R}^n$ which satisfy the following property.  

\begin{definition}[Delocalized] \label{def:delocalized}
Let $b = (b_1, \ldots, b_n)$ be a vector in $\mathbb{R}^n$, and assume $K \geq 1$ and $\delta \in (0,1)$ are given parameters.  We say $b$ is \emph{$(K,\delta)$-delocalized} if
at least $n - \lfloor \delta n \rfloor$ coordinates $b_k$ satisfy the following properties:
\begin{itemize}
\item $b_k = \frac{p_k}{q_k}$, where $p_k, q_k \in \mathbb{Z}$,
\item $|p_k|, |q_k| \leq K$,
\item $p_k, q_k \neq 0$.  
\end{itemize}
\end{definition}

Definition \ref{def:delocalized} ensures that all but $\lfloor \delta n \rfloor$ of the entires of $b$ are comparable, non-zero rational numbers.  In particular, $\1_n$ is $(K,\delta)$-delocalized for all $K \geq 1$ and every $\delta \in (0,1)$.   

\begin{theorem}[Main result: deterministic vectors] \label{thm:main:deterministic}
Let $\xi$ and $\zeta$ be real random variables.  Assume $\xi$ is a symmetric random variable which satisfies \eqref{eq:rvassump} for some $\eps_0, p_0, K_0 > 0$.  Let $W$ be an $n \times n$ Wigner matrix with atom variables $\xi$ and $\zeta$.  Let $M \geq 1$, and suppose the event $\mathcal{B}_{W, M}$ holds with probability at least $1/2$.  Fix $K \geq 1$ and $\alpha > 0$.  Then, there exist constants $C > 0$ and $\delta \in (0,1)$ (depending on $\eps_0, p_0, K_0, M, K$ and $\alpha$) such that the following holds.  Let $b$ be a deterministic vector in $\mathbb{R}^n$ which is $(K,\delta)$-delocalized.  Then, conditionally on $\mathcal{B}_{W,M}$, $(W,b)$ is controllable with probability at least $1 - C n^{-\alpha}$.  
\end{theorem}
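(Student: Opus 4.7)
My plan is to reformulate controllability as a spectral condition and then control the relevant smallest singular values using the Rudelson--Vershynin machinery. Recall the Popov--Belevitch--Hautus characterization: since $W$ is symmetric, $(W,b)$ is controllable iff for every $\lambda\in\mathbb{R}$ the $n\times(n+1)$ matrix $[W-\lambda I\mid b]$ has full row rank $n$. Writing
$$[W-\lambda I\mid b][W-\lambda I\mid b]^{\mathrm{T}} = (W-\lambda I)^2 + bb^{\mathrm{T}},$$
this is equivalent to the quantitative statement
$$\inf_{\lambda\in\mathbb{R}}\ \inf_{v\in S^{n-1}}\Bigl(\|(W-\lambda I)v\|^2+\langle b,v\rangle^2\Bigr)>0,$$
and the goal is to show this infimum exceeds $n^{-B}$ for some large $B$ with probability $1-Cn^{-\alpha}$, conditionally on $\mathcal{B}_{W,M}$.

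Next, I would dispose of $\lambda$ by a net argument. On $\mathcal{B}_{W,M}$, the eigenvalues of $W$ lie in $[-M\sqrt{n},M\sqrt{n}]$, and for $|\lambda|>M\sqrt{n}+1$ we have $\sigma_{\min}(W-\lambda I)\ge 1$. So it suffices to treat $\lambda$ in this compact interval, where I would place an $\eta$-net $\mathcal{N}$ with $\eta=n^{-A}$ for some $A=A(\alpha)$ sufficiently large. Since the map $\lambda\mapsto\sigma_n([W-\lambda I\mid b])$ is $1$-Lipschitz, a uniform bound $\sigma_n([W-\lambda_0 I\mid b])\ge 2\eta$ over $\lambda_0\in\mathcal{N}$ yields a uniform bound for all $\lambda$. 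Thus it is enough to prove, for each fixed $\lambda_0\in\mathcal{N}$ with probability at least $1-n^{-\alpha-2}$, that every unit $v$ satisfies $\|(W-\lambda_0 I)v\|+|\langle b,v\rangle|\gtrsim\eta$; summing over $|\mathcal{N}|=O(n^{A+1})$ gives the claim. This reduction transfers the problem to a single random symmetric matrix $H:=W-\lambda_0 I$ tested against the linear functional $v\mapsto\langle b,v\rangle$.

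For the fixed-$\lambda_0$ step I would follow the Rudelson--Vershynin / Vershynin paradigm for symmetric matrices, splitting $S^{n-1}$ into compressible and incompressible vectors. For compressible $v$, I would combine the standard Wigner lower bound on $\|(W-\lambda_0 I)v\|$ (using symmetry of $\xi$ and Assumption~\ref{assump:nondeg}, which is where the $(\eps_0,p_0,K_0)$ parameters enter) with the observation that the additional slab condition $|\langle b,v\rangle|<\eta$ is incompatible with sparsity of $v$ because $b$ is $(K,\delta)$-delocalized: at least $(1-\delta)n$ of the $b_k$ are nonzero bounded rationals, so if $v$ is supported on a small set $S$ then $\langle b,v\rangle=\sum_{k\in S}b_kv_k$ cannot be abnormally small unless $\delta$ is chosen appropriately in relation to the support size. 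For incompressible $v$, I would use the Rudelson--Vershynin LCD-based small-ball estimate from \cite{RVlo} and the conditioning scheme of \cite{Vsym} (expose one row/column and use the LCD of the conditional normal to an invariant subspace) to bound $\mathbb{P}(\|Hv\|<\eta)$ by a quantity small enough to beat the entropy of a standard $\eta$-net on the incompressible part of the sphere. The no-gaps delocalization of \cite{RVnogaps} can be invoked to ensure incompressible eigenvector-like directions really do have the spread needed for these LCD estimates.

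The principal obstacle, I expect, is the compressible analysis in the presence of the slab condition $|\langle b,v\rangle|<\eta$: the net over compressible $v$ is large in the relevant arithmetic sense, and one needs to quantitatively show that such $v$ cannot simultaneously lie near $b^\perp$ and be near-kernel vectors of $W-\lambda_0 I$, leveraging the rational and bounded structure of $b$'s coordinates. Choosing $\delta$ small in terms of the compressibility parameter (and in terms of $\alpha,\eps_0,p_0,K_0,M,K$) should close this loop, and then the standard union bound over $\mathcal{N}$ and the incompressible net completes the argument.
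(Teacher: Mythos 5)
Your reduction to the quantitative PBH condition
$$\inf_{\lambda\in\mathbb{R}}\ \inf_{v\in S^{n-1}}\bigl(\|(W-\lambda I)v\|^2+\langle b,v\rangle^2\bigr)>0$$
and the subsequent net over $\lambda$ and compressible/incompressible split is reasonable as far as it goes, and the compressible analysis essentially works (though not quite for the reason you state: the lower bound $\|(W-\lambda_0 I)v\|\gtrsim\sqrt{n}$ already holds for all compressible $v$ with exponentially high probability---this is Lemma~\ref{lemma:complower} of the paper---so you never need the slab condition there; and in any case the claim that $|\langle b,v\rangle|<\eta$ is ``incompatible with sparsity'' of $v$ is false, since $\sum_{k\in S}b_kv_k$ can be made exactly zero for any fixed $b$ by choosing $v$ supported on two or more coordinates).

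The real gap is in the incompressible step. You propose bounding $\mathbb{P}(\|(W-\lambda_0 I)v\|<\eta)$ for $v$ on a ``standard $\eta$-net on the incompressible part of the sphere'' and union bounding. This cannot close for the slab set $\{v\in S^{n-1}: |\langle b,v\rangle|<\eta\}$: its $\eta$-covering number is of order $(C/\eta)^{n-2}$, while the LCD-based small ball estimate for a single incompressible $v$ is of order $(CL\eta/\sqrt{\gamma n}+CL/D_L(v,\gamma))^{(1-\gamma)n}$; the product is $\gtrsim\eta^{-\gamma n}$, which diverges. The specialized covering number for LCD sub-level sets (Lemma~\ref{lemma:LCDnet}) is what makes the analogous union bound close in the eigenvector-structure theorem (Theorem~\ref{thm:structure}), but that only establishes that eigenvectors have large LCD; it says nothing about the deterministic hyperplane $b^\perp$, because the slab $b^\perp$ has no arithmetic structure and therefore no small LCD net. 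In short, once you know the eigenvector $v$ is incompressible with large LCD, you still need a source of randomness to conclude $\langle b,v\rangle\neq 0$ for a \emph{fixed} $b$, and no amount of entropy counting over $v$ provides it.

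This is precisely where the paper's hypothesis that $\xi$ is \emph{symmetric} enters---a hypothesis your argument never invokes, which is a warning sign. The paper's key device (Lemma~\ref{lemma:symmetry}) is that for symmetric $\xi$, the matrix $W'=(\psi_i\psi_j w_{ij})$ with $\psi_i$ iid Rademacher independent of $W$ is equal in distribution to $W$, and its eigenvectors are $v'=(\psi_1v_1,\dots,\psi_nv_n)$. This converts the deterministic quantity $\langle b,v\rangle$ into the random Rademacher sum $\sum_k\psi_k v_k b_k$, to which the LCD small-ball estimate (with the rational coefficients $b_k$ of the $(K,\delta)$-delocalized vector controlled via Theorem~\ref{thm:small}/\ref{thm:smalldelocalized}) applies directly. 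Without this symmetry step, or some replacement for it, your proposal does not establish $\langle b,v\rangle\neq 0$ and so does not prove the theorem.
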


The statement of Theorem \ref{thm:main:deterministic} is rather technical.  However, the result is extremely general and applies to a large class of atom variables $\xi$ and $\zeta$.  Specifically, no assumptions are made on $\zeta$, and $\xi$ is only assumed to satisfy \eqref{eq:rvassump}.  We now specialize Theorem \ref{thm:main:deterministic} in the following corollaries.  

\begin{corollary}[Finite fourth moment] \label{cor:fourth:deterministic}
Let $\xi$ be a symmetric real random variable with unit variance and finite fourth moment.  Assume $\zeta$ is a real random variable with finite variance.  Let $W$ be an $n \times n$ Wigner matrix with atom variable $\xi$ and $\zeta$.  Fix $K \geq 1$.  Then there exists a constant $\delta \in (0,1)$ such that the following holds.  Let $b$ be a vector in $\mathbb{R}^n$ which is $(K,\delta)$-delocalized.  Then $(W,b)$ is controllable with probability $1 - o(1)$.  (Here the rate of convergence implicit in $o(1)$ depends on $K$, $\xi$, and the mean and variance of $\zeta$.)  
\end{corollary}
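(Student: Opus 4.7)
The plan is to obtain this corollary as a specialization of Theorem \ref{thm:main:deterministic}, by checking its two hypotheses on $\xi$ and on the spectral norm event $\mathcal{B}_{W,M}$, and then absorbing the conditioning into an additive $o(1)$ error.

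First I would verify Assumption \ref{assump:nondeg}. Since $\xi$ has unit variance it is non-degenerate, so Proposition \ref{prop:nondeg} supplies constants $\eps_0, p_0, K_0 > 0$, depending only on the distribution of $\xi$, for which \eqref{eq:rvassump} holds. The symmetry of $\xi$ is assumed outright.

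Next I would produce $M \geq 1$ with $\Prob(\mathcal{B}_{W,M}^c) = o(1)$ (in particular $\Prob(\mathcal{B}_{W,M}) \geq 1/2$ for all large $n$). Write $W = W_0 + D$, where $W_0$ is the off-diagonal part and $D = \diag(w_{11}, \ldots, w_{nn})$, so that $\|W\| \leq \|W_0\| + \|D\|$. For $\|W_0\|$, the Bai--Yin theorem applies because $\xi$ is symmetric (hence mean zero), has unit variance, and has finite fourth moment; it yields $\Prob(\|W_0\| \geq 3\sqrt{n}) = o(1)$. For $\|D\| = \max_i |w_{ii}|$, the finite variance of $\zeta$ gives
$$ n\,\Prob(|\zeta| > \sqrt{n}) \leq \E\left[\zeta^2 \indicator{|\zeta| > \sqrt{n}}\right] \longrightarrow 0 $$
by dominated convergence, so a union bound yields $\Prob(\|D\| \geq \sqrt{n}) = o(1)$. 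Taking $M = 4$ (say) works.

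With the hypotheses in hand, I would apply Theorem \ref{thm:main:deterministic} with $\alpha = 1$. This produces $C$ and $\delta \in (0,1)$, depending only on $\eps_0, p_0, K_0, M$ and $K$---hence ultimately on $\xi$, the mean and variance of $\zeta$, and $K$---such that for every $(K,\delta)$-delocalized $b$,
$$ \Prob\bigl((W,b) \text{ uncontrollable} \mid \mathcal{B}_{W,M}\bigr) \leq C n^{-1}. $$
A final union bound
$$ \Prob\bigl((W,b) \text{ uncontrollable}\bigr) \leq C n^{-1} + \Prob(\mathcal{B}_{W,M}^c) = o(1) $$
concludes the argument. The only genuine obstacle is the norm bound on $W$: Bai--Yin handles $W_0$ cleanly, but the diagonal must be controlled using only finite variance on $\zeta$. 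The saving observation is that $n\,\Prob(|\zeta| > t\sqrt{n}) \to 0$ for any fixed $t > 0$ whenever $\E[\zeta^2] < \infty$, which suffices.
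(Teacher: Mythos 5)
Your proof is correct and reaches the same conclusion, but it handles the spectral-norm event $\mathcal{B}_{W,M}$ by a genuinely different route than the paper. The paper's Lemma~\ref{lemma:norm:fourth} splits $W = U + U^\mathrm{T} + D$ with $U$ strictly upper triangular, invokes Lata{\l}a's bound $\E\|U\| \leq C\sqrt{n}$ (where $C$ depends only on the fourth moment of $\xi$) together with Markov, and bounds $\Prob(\|D\| \geq M\sqrt{n})$ by $\E|\zeta|^2/M^2$; one then takes $M$ large to make both error terms $\leq \eps$, and concludes via a two-parameter limit ($\eps \to 0$ after $n \to \infty$). You instead keep the symmetric off-diagonal block $W_0$ intact, apply Bai--Yin to get $\Prob(\|W_0\| \geq 3\sqrt{n}) = o(1)$ with a fixed threshold, and use the truncation estimate $n\,\Prob(|\zeta| > \sqrt{n}) \leq \E[\zeta^2\indicator{|\zeta|>\sqrt{n}}] \to 0$ for the diagonal. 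This lets you fix $M=4$ once and for all and skip the $\eps$-bookkeeping, which is cleaner. The trade-off is in the quantitative content of the $o(1)$: Bai--Yin and your dominated-convergence step give rates that depend on the full distributions of $\xi$ and $\zeta$, whereas Lata{\l}a plus Markov (and the crude $\E|\zeta|^2/M^2$ bound) give a rate controlled only by the fourth moment of $\xi$ and the mean and variance of $\zeta$, which is exactly what the parenthetical remark in the corollary asserts. This is a minor point since the $o(1)$ claim itself is nonquantitative, but it is the reason the paper reaches for Lata{\l}a rather than Bai--Yin here.
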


\begin{corollary}[Sub-gaussian] \label{cor:subgaussian:deterministic}
Let $\xi$ be a symmetric, sub-gaussian real random variable with unit variance.  Assume $\zeta$ is a real sub-gaussian random variable.  Let $W$ be an $n \times n$ Wigner matrix with atom variable $\xi$ and $\zeta$.  Fix $K \geq 1$ and $\alpha > 0$.  Then there exist constants $C > 0$ and $\delta \in (0,1)$ (depending on $K, \alpha$, $\xi$, and $\zeta$) such that the following holds.  Let $b$ be a vector in $\mathbb{R}^n$ which is $(K,\delta)$-delocalized.  Then $(W,b)$ is controllable with probability at least $1 - C n^{-\alpha}$.  
\end{corollary}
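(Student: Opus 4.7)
The plan is to derive Corollary \ref{cor:subgaussian:deterministic} as a direct specialization of Theorem \ref{thm:main:deterministic}. Two ingredients must be verified: (i) the symmetric atom variable $\xi$ satisfies Assumption \ref{assump:nondeg} with constants depending only on $\xi$, and (ii) the event $\mathcal{B}_{W,M} = \{\|W\| \leq M\sqrt{n}\}$ defined in \eqref{eq:def:BWM} holds with probability at least $1/2$ for some $M \geq 1$ depending only on the sub-gaussian moments of $\xi$ and $\zeta$.

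For (i), since $\xi$ has unit variance it is non-degenerate, so Proposition \ref{prop:nondeg} produces constants $\eps_0, p_0, K_0 > 0$ for which \eqref{eq:rvassump} holds; these depend only on the distribution of $\xi$ (and hence on $\xi$ in the sense of the corollary). For (ii), I would invoke a standard concentration estimate for the operator norm of Wigner matrices with sub-gaussian entries: there exist $M \geq 1$ and $c > 0$, depending only on the sub-gaussian moments of $\xi$ and $\zeta$, such that
\begin{equation*}
  \Prob\bigl( \|W\| > M\sqrt{n} \bigr) \leq 2 \exp(-c n).
\end{equation*}
The off-diagonal part is a centered sub-gaussian symmetric matrix whose norm is $O(\sqrt{n})$ with exponentially high probability by a standard $\eps$-net argument combined with Hoeffding/Bernstein-type bounds, and the diagonal part (a sub-gaussian random vector realised as a diagonal matrix) contributes at most $O(\sqrt{n})$ with the same quality of probability. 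In particular, $\Prob(\mathcal{B}_{W,M}) \geq 1/2$ for $n$ sufficiently large.

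With (i) and (ii) in place, Theorem \ref{thm:main:deterministic} supplies constants $C' > 0$ and $\delta \in (0,1)$, depending only on $K, \alpha, \xi, \zeta$, such that conditionally on $\mathcal{B}_{W,M}$ the pair $(W,b)$ is controllable with probability at least $1 - C' n^{-\alpha}$ whenever $b$ is $(K,\delta)$-delocalized. Removing the conditioning via a union bound,
\begin{equation*}
  \Prob\bigl( (W,b) \text{ uncontrollable} \bigr)
    \leq C' n^{-\alpha} + 2 \exp(-c n)
    \leq C n^{-\alpha},
\end{equation*}
after enlarging the constant $C$; finitely many small values of $n$ are then absorbed by inflating $C$ further to make the bound trivially true. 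The only substantive input beyond Theorem \ref{thm:main:deterministic} is the spectral-norm estimate in (ii), which I anticipate to be the (mild) main obstacle—it is a standard fact from non-asymptotic random matrix theory, but it must be imported and quantified in terms of sub-gaussian moments of both $\xi$ and $\zeta$.
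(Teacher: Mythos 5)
Your proposal is correct and follows essentially the same route as the paper: verify Assumption \ref{assump:nondeg} via Proposition \ref{prop:nondeg}, obtain a spectral-norm bound $\Prob(\mathcal{B}_{W,M}) \geq 1/2$ from sub-gaussianity (the paper isolates this as Lemma \ref{lemma:norm:subgaussian}, centering by $\mu I$ and citing prior work rather than re-deriving the $\eps$-net estimate), apply Theorem \ref{thm:main:deterministic}, and undo the conditioning. The only cosmetic difference is that you sketch the norm bound directly rather than quoting the paper's lemma.
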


Theorem \ref{thm:Gn12q} does not follow from Corollary \ref{cor:subgaussian:deterministic} directly because the off-diagonal entries of the adjacency matrix $A_n$ are not symmetric.  However, by shifting $A_n$ by a deterministic rank one matrix, we will be able to give an equivalent reformulation of Theorem \ref{thm:Gn12q} for which Corollary \ref{cor:subgaussian:deterministic} is applicable; we do so in Section \ref{sec:proof}.  

\subsection{Results for random vectors}
We now consider the case when the vector $b \in \mathbb{R}^n$ is random.  In particular, for this case, we do not need to assume that the atom variable $\xi$ is symmetric.  We will consider random vectors $b$ which satisfy the following assumption.  

\begin{assumption} \label{assump:random}
We assume $b = (b_1, \ldots, b_n)$ is a random vector in $\mathbb{R}^n$ whose coordinates are iid copies of a random variable $\psi$ which satisfies
$$ \Prob \left( |\psi - \psi'| \leq \eps_1 \right) \leq p_1, \quad \Prob( |\psi| > K_1) \leq p_1 / 4 $$
for some $\eps_1, p_1, K_1 > 0$, where $\psi'$ is an independent copy of $\psi$.   
\end{assumption}
\begin{remark}
The assumptions on $\psi$ are equivalent to the bounds in \eqref{eq:rvassump}.  
\end{remark}

\begin{theorem}[Main result: random vectors] \label{thm:main:random}
Let $\xi$ and $\zeta$ be real random variables.  Assume $\xi$ satisfies \eqref{eq:rvassump} for some $\eps_0, p_0, K_0 > 0$.  Let $W$ be an $n \times n$ Wigner matrix with atom variables $\xi$ and $\zeta$.  Let $M \geq 1$, and suppose the event $\mathcal{B}_{W, M}$ holds with probability at least $1/2$.  Let $\alpha > 0$, and assume $b$ is a random vector in $\mathbb{R}^n$, independent of $W$, which satisfies Assumption \ref{assump:random}.  Then, there exists a  constant $C > 0$ (depending on $\eps_0, p_0, K_0, M, \eps_1, p_1, K_1$, and $\alpha$) such that, conditionally on $\mathcal{B}_{W,M}$, $(W,b)$ is controllable with probability at least $1 - C n^{-\alpha}$.  
\end{theorem}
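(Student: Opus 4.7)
The plan is to combine the Popov--Belevitch--Hautus (PBH) controllability characterization with the independence of $b$ and $W$. Since $W$ is symmetric, the pair $(W,b)$ is controllable if and only if $W$ has simple spectrum and every eigenvector $v$ of $W$ satisfies $v \cdot b \ne 0$: a repeated eigenvalue forces the Krylov subspace to miss a full eigendirection, while $v \cdot b = 0$ traps the Krylov subspace inside $v^{\perp}$ (this is immediate from the diagonalization $W = V \Lambda V^{\mathrm{T}}$, which factors the Krylov matrix as $V \cdot \diag(v_j \cdot b) \cdot \text{Vandermonde}(\lambda_j)$). It therefore suffices to show that, conditionally on $\mathcal{B}_{W,M}$, the probability of either $W$ failing to have simple spectrum or some eigenvector of $W$ being orthogonal to $b$ is at most $C n^{-\alpha}$.

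I would first handle the spectrum and eigenvector structure of $W$, independently of $b$. Under Assumption \ref{assump:nondeg} on $\xi$, the eigenvalue-gap bounds for Wigner matrices of Nguyen--Tao--Vu and Vershynin \cite{Vsym} should give simple spectrum with conditional probability at least $1 - C n^{-\alpha}$. For the eigenvectors, I would establish a lower bound on the least common denominator (LCD) of every unit eigenvector of $W$ with the same type of probability, combining the no-gaps delocalization of Rudelson--Vershynin \cite{RVnogaps} with the Littlewood--Offord inequalities of \cite{RVlo, Vsym}. Because $\xi$ is \emph{not} assumed symmetric in Theorem \ref{thm:main:random}, some care is needed to invoke the appropriate general-Wigner form of these results under the minimal hypothesis \eqref{eq:rvassump}.

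Now I would condition on $W$, which is legitimate since $b$ is independent of $W$. On the good event from the previous step, the eigenvectors $v_1, \ldots, v_n$ of $W$ are deterministic unit vectors of large LCD. For each $j$, applying the Rudelson--Vershynin small-ball inequality to the linear combination $v_j \cdot b = \sum_i v_{j,i}\, b_i$, using that the coordinates of $b$ are iid and non-degenerate by Assumption \ref{assump:random}, should yield a bound of the form $\Prob(v_j \cdot b = 0 \mid W) \leq C n^{-\alpha - 1}$. A union bound over $j \in [n]$ then produces total failure probability $C n^{-\alpha}$, and the factor-of-$2$ loss from conditioning on $\mathcal{B}_{W,M}$ (which has probability at least $1/2$) is absorbed into $C$. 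The hard part will be the eigenvector/LCD step: the classical Littlewood--Offord estimate of order $n^{-1/2}$ per eigenvector cannot survive the union bound over $n$ eigenvectors, so the proof must extract genuinely (super-)polynomial small-ball decay from the additive structure of Wigner eigenvectors, which is precisely where the full strength of the Rudelson--Vershynin machinery is required.
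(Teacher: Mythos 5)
Your proposal takes essentially the same route as the paper: reduce via PBH to showing (i) $W$ has simple spectrum with high probability (Tao--Vu), and (ii) conditionally on $W$, each unit eigenvector $v$ satisfies $\Prob(v \cdot b = 0) \le C n^{-\alpha-1}$ via a lower bound on the (regularized) LCD of $v$ coming from the Rudelson--Vershynin incompressibility and Littlewood--Offord machinery, followed by a union bound over the $n$ eigenvectors. You also correctly locate the crux — obtaining polynomial-rate small-ball decay per eigenvector so the union bound survives — which is exactly what the paper's Theorems \ref{thm:structure} and \ref{thm:smallregularized} deliver.
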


As before, we specialize Theorem \ref{thm:main:random} in the following corollaries.  

\begin{corollary}[Finite fourth moment] \label{cor:fourth:random}
Let $\xi$ be a real random variable with mean zero, unit variance, and finite fourth moment.  Assume $\zeta$ is a real random variable with finite variance.  Let $W$ be an $n \times n$ Wigner matrix with atom variable $\xi$ and $\zeta$.  Assume $b$ is a random vector in $\mathbb{R}^n$, independent of $W$, which satisfies Assumption \ref{assump:random}.  Then $(W,b)$ is controllable with probability $1 - o(1)$.  (Here the rate of convergence implicit in $o(1)$ depends on $\eps_1, p_1, K_1$, $\xi$, and the mean and variance of $\zeta$.)
\end{corollary}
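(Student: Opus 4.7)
The plan is to deduce the corollary as a direct consequence of Theorem~\ref{thm:main:random}; essentially all the work has already been done, and what remains is to verify the theorem's hypotheses. First, since $\xi$ has unit variance it has nonzero variance and is therefore non-degenerate, so Proposition~\ref{prop:nondeg} produces constants $\eps_0, p_0, K_0 > 0$, depending only on the distribution of $\xi$, for which $\xi$ satisfies \eqref{eq:rvassump}. The assumption on $b$ is given in the statement of the corollary.

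The only real step is to produce an $M \geq 1$ for which $\P(\mathcal{B}_{W,M}) \geq 1/2$. Here I would invoke a classical spectral-norm bound for Wigner matrices: because $\xi$ has finite fourth moment and $\zeta$ has finite variance, Bai--Yin's theorem gives $\|W\|/\sqrt{n} \to 2$ almost surely as $n \to \infty$. One can also obtain the weaker in-probability statement needed here directly via a truncation plus trace-moment computation, using that $\E[\tr(W^{2k})]$ for $k = 2$ is controlled by the fourth moment of $\xi$ and the variance of $\zeta$. Either way, there is a constant $M$, depending on the distributions of $\xi$ and $\zeta$ but not on $n$, such that $\P(\|W\| > M \sqrt{n}) = o(1)$; in particular $\P(\mathcal{B}_{W,M}) \geq 1/2$ for all $n$ sufficiently large (and one can enlarge $M$ to absorb any finitely many small values of $n$).

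With the hypotheses in place, I would apply Theorem~\ref{thm:main:random} with any fixed $\alpha > 0$ to obtain a constant $C > 0$ such that, conditionally on $\mathcal{B}_{W,M}$, the pair $(W,b)$ is controllable with probability at least $1 - Cn^{-\alpha}$. Removing the conditioning gives
\begin{align*}
\P\bigl((W,b) \text{ uncontrollable}\bigr) &\leq \P\bigl((W,b) \text{ uncontrollable} \mid \mathcal{B}_{W,M}\bigr) + \P(\mathcal{B}_{W,M}^c) \\
&\leq Cn^{-\alpha} + o(1) = o(1),
\end{align*}
which is the claimed bound. No substantive obstacle arises here: the corollary is a soft consequence of Theorem~\ref{thm:main:random}, and the only non-trivial external ingredient is the routine Wigner spectral-norm estimate under a finite fourth-moment hypothesis.
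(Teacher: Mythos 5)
Your overall strategy is exactly the paper's: verify that $\xi$ satisfies \eqref{eq:rvassump} via Proposition~\ref{prop:nondeg}, find $M$ with $\P(\mathcal{B}_{W,M}) \geq 1/2$, apply Theorem~\ref{thm:main:random}, and remove the conditioning by the law of total probability. The one place where your sketch would run into trouble is the spectral-norm step: the hypotheses give $\zeta$ only a finite \emph{variance}, not a finite fourth moment and not mean zero, so a direct appeal to Bai--Yin or a trace-moment computation of $\E[\tr(W^4)]$ does not go through (the diagonal contributes $\sum_i \E[\zeta^4]$, which need not be finite). The paper avoids this by isolating the diagonal: Lemma~\ref{lemma:norm:fourth} decomposes $W = U + L + D$, bounds $\|U\| = \|L\|$ using Lata{\l}a's norm estimate (which only needs a fourth moment on the off-diagonal entries, which are iid copies of $\xi$), and bounds $\|D\| = \max_i |w_{ii}|$ by a union bound and Markov's inequality using only $\E|\zeta|^2 < \infty$. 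Citing Lemma~\ref{lemma:norm:fourth} directly (as the paper does) repairs this step; everything else in your argument is correct and mirrors the paper's proof.
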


\begin{corollary}[Sub-gaussian] \label{cor:subgaussian:random}
Let $\xi$ and $\zeta$ be real sub-gaussian random variables, and assume $\xi$ has mean zero and unit variance.  Let $W$ be an $n \times n$ Wigner matrix with atom variable $\xi$ and $\zeta$.  Assume $b$ is a random vector in $\mathbb{R}^n$, independent of $W$, which satisfies Assumption \ref{assump:random}.  Then, for every $\alpha > 0$, there exists $C > 0$ (depending on $\eps_1, p_1, K_1, \alpha$, $\xi$, and $\zeta$) such that $(W,b)$ is controllable with probability at least $1 - C n^{-\alpha}$.  
\end{corollary}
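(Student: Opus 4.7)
The plan is to deduce Corollary \ref{cor:subgaussian:random} as a direct consequence of Theorem \ref{thm:main:random}. Only two hypotheses need to be checked: that the atom variable $\xi$ satisfies \eqref{eq:rvassump} for some $\eps_0, p_0, K_0 > 0$, and that the event $\mathcal{B}_{W,M}$ holds with probability at least $1/2$ for some $M \geq 1$. No symmetry assumption on $\xi$ is required, which is precisely why Theorem \ref{thm:main:random} (rather than Theorem \ref{thm:main:deterministic}) is the correct tool when $b$ is random.

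For the first hypothesis, I would observe that $\xi$ has unit variance and is therefore non-degenerate. Proposition \ref{prop:nondeg} then produces constants $\eps_0, p_0, K_0 > 0$, depending only on the distribution of $\xi$, such that \eqref{eq:rvassump} holds. For the second hypothesis, I would invoke the standard sub-gaussian spectral norm estimate for Wigner matrices: under the sub-gaussian assumption on $\xi$ and $\zeta$, there exist $M \geq 1$ and $c > 0$, depending only on the sub-gaussian moments, such that
$$ \Prob( \|W\| > M \sqrt{n} ) \leq 2 \exp(-c n). $$
This is classical and follows from an $\eps$-net argument combined with Hoeffding-type concentration for sub-gaussian quadratic forms; see e.g.\ Vershynin's non-asymptotic random matrix theory notes. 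By choosing $M$ large enough (and enlarging it further to handle any finitely many small $n$), one arranges $\Prob(\mathcal{B}_{W,M}) \geq 1/2$.

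Granted these, Theorem \ref{thm:main:random} produces a constant $C_0 > 0$ (depending on $\eps_0, p_0, K_0, M, \eps_1, p_1, K_1, \alpha$, and hence on $\xi$, $\zeta$, $\eps_1, p_1, K_1, \alpha$) such that, conditionally on $\mathcal{B}_{W,M}$, the pair $(W,b)$ is controllable with probability at least $1 - C_0 n^{-\alpha}$. Letting $\mathcal{E}$ denote the event that $(W,b)$ is controllable, I would then bound
$$ \Prob(\mathcal{E}^c) \leq \Prob(\mathcal{E}^c \cap \mathcal{B}_{W,M}) + \Prob(\mathcal{B}_{W,M}^c) \leq C_0 n^{-\alpha} + 2 \exp(-c n) \leq C n^{-\alpha} $$
for a suitable $C > 0$, completing the proof.

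There is no real obstacle beyond the bookkeeping above; the entire probabilistic content of the corollary is already packaged inside Theorem \ref{thm:main:random}. The only external ingredient is the exponential spectral-norm tail bound for sub-gaussian Wigner matrices, and it is precisely the availability of this exponential tail that makes the sub-gaussian case cleaner than the finite fourth moment case of Corollary \ref{cor:fourth:random}, where only a qualitative $o(1)$ statement is achievable via Markov-type bounds on $\|W\|$.
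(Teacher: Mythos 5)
Your proposal is correct and matches the paper's intended argument: the paper proves Corollary \ref{cor:subgaussian:deterministic} by combining Proposition \ref{prop:nondeg}, the sub-gaussian spectral norm bound of Lemma \ref{lemma:norm:subgaussian}, and Theorem \ref{thm:main:deterministic}, and then explicitly states that Corollary \ref{cor:subgaussian:random} follows by the identical argument with Theorem \ref{thm:main:random} in place of Theorem \ref{thm:main:deterministic}. The only cosmetic difference is that you invoke a textbook spectral-norm bound rather than the paper's own Lemma \ref{lemma:norm:subgaussian}; note that since $\zeta$ is only sub-gaussian (not centered), the bound you cite should, strictly speaking, be applied to $W - (\E\zeta) I$ and the identity shift absorbed into $M$, exactly as Lemma \ref{lemma:norm:subgaussian} does.
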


\section{Proof of Theorem \ref{thm:Gn12q} and the corollaries from Section \ref{sec:results}} \label{sec:proof}

This section is devoted to the proof of Theorem \ref{thm:Gn12q} as well as the various corollaries appearing in Section \ref{sec:results}.  We prove these results assuming Theorems \ref{thm:main:deterministic} and \ref{thm:main:random}.  We begin with the corollaries.  

\subsection{Proof of Corollaries \ref{cor:fourth:deterministic} and \ref{cor:fourth:random}}

We will need the following bound on the spectral norm of a Wigner random matrix whose off-diagonal entries have finite fourth moment.  
\begin{lemma}[Spectral norm: finite fourth moment] \label{lemma:norm:fourth}
Let $\xi$ be a real random variable with mean zero, unit variance, and finite fourth moment.  Assume $\zeta$ is a real random variable with finite variance.  Let $W$ be an $n \times n$ Wigner matrix with atom variable $\xi$ and $\zeta$.  Then, for every $\eps > 0$, there exists $M \geq 1$ (depending only on $\eps$, the fourth moment of $\xi$, and the mean and variance of $\zeta$) such that $\mathcal{B}_{W,M}$ holds with probability at least $1 - \eps$.  
\end{lemma}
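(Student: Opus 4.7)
The plan is to reduce to the case of a mean-zero Wigner matrix and then invoke a classical spectral norm estimate. Write $W = \tilde W + \mu I$, where $\mu := \E[\zeta]$ and $\tilde W$ is a Wigner matrix with off-diagonal atom $\xi$ (still mean zero, unit variance, finite fourth moment) and diagonal atom $\zeta - \mu$ (mean zero, finite variance). Since $\|W\| \le \|\tilde W\| + |\mu|$ and $|\mu|$ depends only on the distribution of $\zeta$, it suffices to control $\|\tilde W\|/\sqrt n$.

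For $\tilde W$, I would invoke the Bai--Yin theorem, which asserts that for a real symmetric random matrix whose above-diagonal entries are iid with mean zero, unit variance, and finite fourth moment, and whose diagonal entries are iid with finite variance, one has $\|\tilde W\|/\sqrt n \to 2$ almost surely. Convergence in probability then yields, for any $\eps > 0$, an index $n_0$ and a constant $M_0 > 0$ depending only on $\eps$, the fourth moment of $\xi$, and $\var(\zeta)$, such that $\Prob(\|\tilde W\| > M_0 \sqrt n) < \eps$ for every $n \ge n_0$. For the finitely many $n < n_0$, a crude bound $\|\tilde W\| \le \|\tilde W\|_F$ combined with $\E[\|\tilde W\|_F^2] \le n^2 + n\,\var(\zeta)$ and Markov's inequality produces a constant $M_\ast = M_\ast(n_0, \eps, \var(\zeta))$ with $\Prob(\|\tilde W\| > M_\ast \sqrt n) < \eps$ for all $n < n_0$. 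Setting $M := \max\{M_0, M_\ast\} + |\mu|$, further enlarged to satisfy $M \geq 1$, gives a constant depending only on $\eps$, the fourth moment of $\xi$, and the mean and variance of $\zeta$, and satisfying $\Prob(\mathcal{B}_{W,M}) \geq 1 - \eps$ for every $n$.

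The main obstacle, such as it is, lies in ensuring that $M$ is uniform in $n$ while depending only on the parameters stated in the lemma; this is achieved by the two-regime split above, with Bai--Yin supplying the bound for large $n$ and the Frobenius/Markov estimate handling small $n$. A self-contained alternative avoiding Bai--Yin would proceed by truncating each off-diagonal entry at a threshold $T$ (chosen using the finite fourth moment of $\xi$ so that the truncated entries have small variance), applying a moment-method bound or the non-commutative Khintchine inequality to the bounded Wigner matrix that results, and controlling the contribution of the un-truncated tail via a union bound and Markov's inequality on $\xi^4$.
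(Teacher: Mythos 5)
Your proposal takes a genuinely different route from the paper's. The paper decomposes $W = U + L + D$ with $D$ diagonal, $U$ strictly upper-triangular and $L = U^{\mathrm{T}}$; it controls $\|D\| = \max_i |w_{ii}|$ by a union bound and Markov's inequality (giving a bound depending only on the mean and variance of $\zeta$), and controls $\|U\|$ by Lata\l a's \emph{quantitative} estimate $\E\|U\| \le C\sqrt{n}$ with $C$ a function of $\E\xi^4$ alone, again followed by Markov. That route delivers explicitly the dependency claimed in the lemma.

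Your main argument, by contrast, leans on the Bai--Yin theorem, and this is where there is a genuine gap relative to the lemma as stated. Bai--Yin is an asymptotic almost-sure convergence result with no uniform rate: for a fixed distribution of $\xi$ one gets an $n_0$ beyond which $\Prob(\|\tilde W\| > M_0\sqrt{n}) < \eps$, but two off-diagonal distributions with identical fourth moments can converge to the Bai--Yin limit at entirely different speeds, so $n_0$ depends on the full law of $\xi$, not merely on $\E\xi^4$ and $\var(\zeta)$ as you assert. Since your small-$n$ Frobenius/Markov constant $M_\ast$ then inherits its dependence on $n_0$, the resulting $M$ depends on the distribution of $\xi$ beyond the fourth moment, contradicting the dependency advertised in the lemma. (Your argument would still be adequate for the downstream Corollary \ref{cor:fourth:deterministic}, whose $o(1)$ rate is permitted to depend on $\xi$ in full; but it does not prove the lemma as stated.) The self-contained alternative you sketch at the end---truncation using the finite fourth moment, then a moment-method or non-commutative Khintchine bound for the bounded part, plus a tail estimate for the remainder---is the right fix and is essentially what Lata\l a's bound packages; substituting that for Bai--Yin closes the gap and brings your proof in line with the paper's.
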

\begin{proof}
Let $\mu$ and $\sigma^2$ be the mean and variance of $\zeta$.  We decompose the matrix $W$ as
$$ W := U + L + D, $$
where $D$ is a diagonal matrix (whose entries are iid copies of $\zeta$), $U$ is a strictly upper-triangular matrix (whose strictly upper-triangular entries are iid copies of $\xi$), and $L = U^\mathrm{T}$.  Since $\|W\| \leq \|U\| + \|L\| + \|D\|$, we have
\begin{equation} \label{eq:normsplit}
	\Prob( \|W\| \geq 3 M \sqrt{n} ) \leq 2 \Prob( \|U\| \geq M \sqrt{n} ) + \Prob( \|D \| \geq M \sqrt{n} ) 
\end{equation}
because $\|U\| = \|L\|$.  

Since $D$ is diagonal, it follows that $\|D\| = \max_{1 \leq i \leq n} |w_{ii}|$.  As the diagonal entries of $W$ are iid copies of $\zeta$, we have
\begin{align*}
	\Prob(\|D\| \geq M \sqrt{n} ) \leq n \Prob(|\zeta| \geq M \sqrt{n} ) 	\leq \frac{\E|\zeta|^2}{ M^2 } \leq 2 \frac{ \sigma^2 + |\mu|^2}{M^2}.
\end{align*}

From \cite[Theorem 2]{L}, it follows that
$$ \E \| U \| \leq C \sqrt{n}, $$
where $C > 0$ depends only on the fourth moment of $\xi$.  Here the results of \cite{L} are applicable since the entries of $U$ are independent and have mean zero.  Thus, by Markov's inequality, we conclude that
$$ \Prob( \|U \| \geq M \sqrt{n} ) \leq \frac{C}{M}. $$

Combining the bounds above with \eqref{eq:normsplit}, we obtain
$$ \Prob( \|W\| \geq 3 M \sqrt{n} ) \leq 2 \frac{C}{M} + 2 \frac{ \sigma^2 + |\mu|^2}{M^2} \leq \eps $$
by taking $M$ sufficiently large.  
\end{proof}

We now prove Corollaries \ref{cor:fourth:deterministic} and \ref{cor:fourth:random} assuming Theorems \ref{thm:main:deterministic} and \ref{thm:main:random}.

\begin{proof}[Proof of Corollary \ref{cor:fourth:deterministic}]
By Proposition \ref{prop:nondeg}, it follows that $\xi$ satisfies \eqref{eq:rvassump} for some $\eps_0, p_0, K_0 > 0$.  In addition, since $\xi$ is symmetric, $\xi$ must have mean zero.  Let $0 < \eps < 1/4$.  In view of Lemma \ref{lemma:norm:fourth}, there exists $M \geq 1$ such that $\mathcal{B}_{W,M}$ holds with probability at least $1 - \eps$.  Fix $K \geq 1$.  Since $W$ satisfies the assumptions of Theorem \ref{thm:main:deterministic}, there exists $C > 0$ and $\delta \in (0,1)$ such that, if $b$ is a $(K,\delta)$-delocalized vector in $\mathbb{R}^n$, then, conditionally on $\mathcal{B}_{W,M}$, $(W,b)$ is controllable with probability at least $1 - C n^{-1}$.  

Let $b$ be a $(K, \delta)$-delocalized vector in $\mathbb{R}^n$.  Define the event
$$ \mathcal{E} := \{ (W,b) \text{ is controllable} \}. $$
Then
\begin{align*}
	\Prob( \mathcal{E}^c ) &= \Prob \left( \mathcal{E}^c \relmiddle| \mathcal{B}_{W,M} \right) \Prob( \mathcal{B}_{W, M}) + \Prob \left( \mathcal{E}^c \relmiddle| \mathcal{B}_{W,M}^c \right) \Prob( \mathcal{B}_{W,M}^c ) \\
	&\leq \Prob \left( \mathcal{E}^c \relmiddle| \mathcal{B}_{M,W} \right) + \Prob ( \mathcal{B}_{W,M}^c ) \\
	&\leq \frac{C}{n} + \eps \\
	&\leq 2 \eps 
\end{align*}
for $n$ sufficiently large.  Since $\eps$ was arbitrary, the proof is complete.  
\end{proof}

The proof of Corollary \ref{cor:fourth:random} is nearly identical to the proof of Corollary \ref{cor:fourth:deterministic} except one applies Theorem \ref{thm:main:random} instead of Theorem \ref{thm:main:deterministic}; we omit the details.  

\subsection{Proof of Corollaries \ref{cor:subgaussian:deterministic} and \ref{cor:subgaussian:random}}

We will need the following bound on the spectral norm of a Wigner matrix with sub-gaussian entries.  

\begin{lemma}[Spectral norm: sub-gaussian] \label{lemma:norm:subgaussian}
Let $\xi$ and $\zeta$ be real sub-gaussian random variables, and assume $\xi$ has mean zero and unit variance.  Let $W$ be an $n \times n$ Wigner matrix with atom variable $\xi$ and $\zeta$.  Then there exists $C, c > 0$ and $M \geq 1$ (depending only on the sub-gaussian moments of $\xi$ and $\zeta$) such that $\mathcal{B}_{W,M}$ holds with probability at least $1 - C \exp(-c n)$.  
\end{lemma}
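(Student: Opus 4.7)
The plan is to mirror the decomposition used in the proof of Lemma \ref{lemma:norm:fourth}, substituting the Markov-type polynomial tail bounds with exponential concentration estimates that are available under the sub-gaussian hypothesis. Write $W = U + U^{\mathrm{T}} + D$, where $D$ is the diagonal part (whose diagonal entries are iid copies of $\zeta$) and $U$ is the strictly upper-triangular part (whose entries above the diagonal are iid copies of $\xi$). Since $\|U^{\mathrm{T}}\| = \|U\|$, the triangle inequality gives $\|W\| \leq 2\|U\| + \|D\|$, so it suffices to bound each summand by a constant multiple of $\sqrt{n}$ with exponentially small failure probability and then combine via a union bound.

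For the diagonal part, $\|D\| = \max_{1 \leq i \leq n} |w_{ii}|$. Since each $w_{ii}$ is sub-gaussian with some moment $\kappa > 0$, the defining tail bound combined with a union bound over the $n$ diagonal entries yields $\Prob(\|D\| \geq \sqrt{n}) \leq n \kappa^{-1} \exp(-\kappa n)$, which is of the required form after absorbing the polynomial factor $n$ into the exponential (at the cost of shrinking $c$).

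For the strictly upper-triangular part, I would invoke a standard non-asymptotic concentration bound for the spectral norm of a square matrix with independent, mean-zero, uniformly sub-gaussian entries: there exists $M_1 \geq 1$ (depending only on the sub-gaussian moment of $\xi$) and constants $C, c > 0$ such that $\Prob(\|U\| \geq M_1 \sqrt{n}) \leq C \exp(-c n)$. Such bounds follow by a standard $\eps$-net argument over the unit sphere $S^{n-1}$ combined with a Hoeffding-type concentration inequality applied to the sub-gaussian quadratic form $\langle U x, y \rangle$; the zero entries on and below the diagonal of $U$ are trivially sub-gaussian, so the independence-based proof goes through verbatim. Alternatively one may embed $U$ into a full matrix of iid sub-gaussian entries whose spectral norm dominates $\|U\|$ and quote the well-known bound for that model.

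Combining these two estimates with $M := 2 M_1 + 1$ and a union bound yields $\Prob(\|W\| > M \sqrt{n}) \leq C' \exp(-c' n)$, which is exactly the conclusion of the lemma. I do not anticipate any real obstacle: the proof is a direct analogue of Lemma \ref{lemma:norm:fourth}, and the only new ingredient is the (by-now classical) exponential spectral-norm concentration for sub-gaussian matrices, for which standard references in non-asymptotic random matrix theory apply off the shelf.
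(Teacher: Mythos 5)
Your proof is correct, but it takes a genuinely different route than the paper. The paper's proof does not decompose $W$ into triangular and diagonal parts at all; instead it notes that $|\mu| := |\E\zeta|$ is bounded by the sub-gaussian moment of $\zeta$, writes $W = (W - \mu I) + \mu I$, observes that $W - \mu I$ is a Wigner matrix with mean-zero sub-gaussian entries, and then cites \cite[Lemma 5]{OT} as a black box to get $\|W-\mu I\| \le M\sqrt n$ with probability $1 - C\exp(-cn)$; the triangle inequality then finishes. Your decomposition $W = U + U^{\mathrm{T}} + D$ avoids the need to center the diagonal (you simply bound $\|D\|$ crudely by $\sqrt n$ via a union bound over the $n$ sub-gaussian diagonal entries, whether or not $\zeta$ is centered) at the cost of needing a spectral-norm concentration bound for the non-symmetric sub-gaussian matrix $U$ rather than a symmetric Wigner one. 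Your first route to that bound (an $\eps$-net argument treating all entries as independent, mean-zero, uniformly sub-gaussian, with the zero entries on and below the diagonal handled trivially) is sound. Your alternative phrasing, ``embed $U$ into a full matrix of iid sub-gaussian entries whose spectral norm dominates $\|U\|$,'' is the one step that is imprecise as stated: zeroing out a subset of entries of a random matrix does not in general decrease the operator norm, so there is no such pointwise domination; what is true is that the standard net argument applies directly to matrices with independent (not necessarily identically distributed) uniformly sub-gaussian entries, which is what you say first. Net effect: the paper's proof is shorter because it leans on a prior lemma tailored to the symmetric Wigner setting, whereas yours is more self-contained and mirrors Lemma \ref{lemma:norm:fourth}, and either is acceptable.
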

\begin{proof}
Let $\mu$ be the mean of $\zeta$.  It follows that $|\mu|$ can be bounded in terms of the sub-gaussian moment of $\zeta$.  Thus, by increasing the constant $C$ if necessary, it suffices to assume that $|\mu| \leq \sqrt{n}$.  

We first consider the spectral norm of the matrix $W - \mu I$, where $I$ is the identity matrix.  In particular, the entries of $W - \mu I$ are sub-gaussian random variables with mean zero.  Thus, by \cite[Lemma 5]{OT}, there exists $C, c > 0$ and $M \geq 1$ such that 
$$ \|W - \mu I \| \leq M \sqrt{n} $$
with probability at least $1 - C \exp(-c n)$.  Hence, 
$$ \|W\| \leq \|W - \mu I\| + \| \mu I\| = \| W - \mu I \| + |\mu| \leq (M + 1) \sqrt{n} $$
on the same event.  
\end{proof}

We now prove Corollaries \ref{cor:subgaussian:deterministic} and \ref{cor:subgaussian:random}.

\begin{proof}[Proof of Corollary \ref{cor:subgaussian:deterministic}]
By Proposition \ref{prop:nondeg}, it follows that $\xi$ satisfies \eqref{eq:rvassump} for some $\eps_0, p_0, K_0 > 0$.  In addition, since $\xi$ is symmetric, $\xi$ must have mean zero.  In view of Lemma \ref{lemma:norm:subgaussian}, there exist constants $C_0, c_0 > 0$ and $M \geq 1$ such that $\mathcal{B}_{W,M}$ holds with probability at least $1 - C_0 \exp(-c_0 n)$.  Fix $K \geq 1$ and $\alpha > 0$.  Since $W$ satisfies the assumptions of Theorem \ref{thm:main:deterministic}, there exists $C > 0$ and $\delta \in (0,1)$ such that, if $b$ is a $(K,\delta)$-delocalized vector in $\mathbb{R}^n$, then, conditionally on $\mathcal{B}_{W,M}$, $(W,b)$ is controllable with probability at least $1 - C n^{-\alpha}$.  

Let $b$ be a $(K, \delta)$-delocalized vector in $\mathbb{R}^n$.  Define the event
$$ \mathcal{E} := \{ (W,b) \text{ is controllable} \}. $$
Then
\begin{align*}
	\Prob( \mathcal{E}^c ) &= \Prob \left( \mathcal{E}^c \relmiddle| \mathcal{B}_{W,M} \right) \Prob( \mathcal{B}_{W, M}) + \Prob \left( \mathcal{E}^c \relmiddle| \mathcal{B}_{W,M}^c \right) \Prob( \mathcal{B}_{W,M}^c ) \\
	&\leq \Prob \left( \mathcal{E}^c \relmiddle| \mathcal{B}_{M,W} \right) + \Prob ( \mathcal{B}_{W,M}^c ) \\
	&\leq C n^{-\alpha} + C_0 \exp(-c_0 n) \\
	&\leq C' n^{-\alpha}
\end{align*}
for some constant $C' > 0$.  
\end{proof}

The proof of Corollary \ref{cor:subgaussian:random} is similar and relies on Theorem \ref{thm:main:random}; we omit the details.  

\subsection{Proof of Theorem \ref{thm:Gn12q}}

We now turn our attention to proving Theorem \ref{thm:Gn12q} using Corollary \ref{cor:subgaussian:deterministic}.  Unfortunately, one cannot apply Corollary \ref{cor:subgaussian:deterministic} directly to the adjacency matrix $A_n$ because the off-diagonal entries are not symmetric random variables.  We can overcome this obstacle by writing $A_n$ as a rank one perturbation of a Wigner matrix $W$ whose off-diagonal entries are symmetric.  We then show that controllability of $(W, \1_n)$ is equivalent to controllability of $(A_n, \1_n)$.  

We begin with a version of the Popov--Belevitch--Hautus (PBH) test for controllability, which we prove for completeness.  We refer the reader to \cite[Section 12.2]{Hlst} for further details.  

\begin{lemma}[PBH criterion] \label{lemma:PBHtest}
Let $A$ be an $n \times n$ real symmetric matrix, and assume $b \in \mathbb{R}^n$.  Then $(A,b)$ is uncontrollable if and only if there exists an eigenvector $v$ of $A$ such that $v^\mathrm{T} b = 0$.  
\end{lemma}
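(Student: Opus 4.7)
The plan is to translate uncontrollability into a statement about the left null space of the Krylov matrix $K := (b \mid Ab \mid \cdots \mid A^{n-1}b)$ and then exploit the spectral theorem for the real symmetric matrix $A$. Since $K$ is square, it fails to have full rank if and only if there exists a nonzero $w \in \mathbb{R}^n$ with $w^\mathrm{T} K = 0$; equivalently,
\[ w^\mathrm{T} A^k b = 0 \qquad \text{for } k = 0, 1, \ldots, n-1. \]
This reformulation is the starting point for both directions.

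For the easy direction, if $v \neq 0$ is an eigenvector of $A$ with $Av = \lambda v$ and $v^\mathrm{T} b = 0$, then $v^\mathrm{T} A^k b = \lambda^k \, v^\mathrm{T} b = 0$ for every $k$, so $v$ witnesses the rank deficiency of $K$, and $(A,b)$ is uncontrollable.

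For the reverse direction, I would use the orthogonal spectral decomposition $A = \sum_{j=1}^m \mu_j P_j$, where $\mu_1, \ldots, \mu_m$ are the \emph{distinct} eigenvalues of $A$ and $P_j$ is the orthogonal projection onto the $\mu_j$-eigenspace. Assuming $(A,b)$ is uncontrollable, take a witness $w \neq 0$ as above and plug $A^k = \sum_j \mu_j^k P_j$ into the orthogonality conditions to obtain
\[ \sum_{j=1}^m \mu_j^k \, (w^\mathrm{T} P_j b) = 0, \qquad k = 0, 1, \ldots, n-1. \]
Because the $\mu_j$ are pairwise distinct and $m \leq n$, the top $m \times m$ Vandermonde block is invertible, forcing $w^\mathrm{T} P_j b = 0$ for every $j$. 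Since $\sum_j P_j = I$ and $w \neq 0$, some projection $v := P_{j_0} w$ is nonzero; this $v$ is an eigenvector of $A$ with eigenvalue $\mu_{j_0}$, and self-adjointness of $P_{j_0}$ yields $v^\mathrm{T} b = w^\mathrm{T} P_{j_0} b = 0$, as required.

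The only mild subtlety is the possibility of repeated eigenvalues, which is exactly why I would group by distinct eigenvalues and project onto full eigenspaces rather than work with a fixed orthonormal eigenbasis; doing so cleanly extracts a genuine eigenvector from the null-space witness $w$ via the Vandermonde step.
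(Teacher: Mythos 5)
Your proof is correct, and it takes a genuinely different route from the paper's in the nontrivial direction. The paper argues the contrapositive: it assumes $v^\mathrm{T} b \neq 0$ for every eigenvector $v$, deduces that each eigenspace must be one-dimensional (since a two-dimensional eigenspace would meet $b^\perp$ nontrivially), concludes that the real symmetric matrix $A$ has $n$ distinct eigenvalues, and then invokes an external controllability criterion from the authors' earlier paper to finish. Your argument is direct and self-contained: starting from a nonzero left null vector $w$ of the Krylov matrix, you expand $A^k = \sum_j \mu_j^k P_j$ over the distinct eigenvalues, use the invertibility of the $m\times m$ Vandermonde block to conclude $w^\mathrm{T} P_j b = 0$ for every $j$, and then extract an eigenvector $v = P_{j_0} w$ with $v^\mathrm{T} b = 0$ via the self-adjointness of the spectral projections. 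The paper's route is shorter on the page but relies on a cited lemma and on first establishing simple spectrum; yours handles repeated eigenvalues uniformly without any such detour, which is arguably the cleaner proof of this classical fact. Both are correct.
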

\begin{remark}
In its most general form, the PBH criterion also applies to non-symmetric matrices.  We state the PBH criterion in this form because we will only apply it to real symmetric matrices in this note.  
\end{remark}
\begin{proof}[Proof of Lemma \ref{lemma:PBHtest}]
Let $Q$ denote the $n \times n$ controllability matrix in \eqref{eq:Ab}.  To begin, assume $v$ is an eigenvector of $A$ with corresponding eigenvalue $\lambda$ and $v^\mathrm{T} b = 0$.  Then $v^\mathrm{T} A = \lambda v^\mathrm{T}$.  Thus, for any $k = 0, \ldots, n-1$, 
$$ v^\mathrm{T} A^k b = \lambda^k v^\mathrm{T} b = 0, $$
where we use the convention that $A^0$ is the identity matrix.  Therefore, $v^\mathrm{T} Q = 0$, and hence $(A,b)$ is uncontrollable.  

Conversely, suppose $v^\mathrm{T} b \neq 0$ for every eigenvector $v$ of $A$.  Clearly, $b \neq 0$.  In addition, this condition implies that each eigenspace of $A$ has dimension one.  Indeed, if $A$ has an eigenspace of dimension at least two, then that eigenspace would have a non-trivial intersection with the orthogonal complement of the space spanned by $b$.  This would contradict the fact that $v^\mathrm{T} b \neq 0$ for every eigenvector $v$.  

Since $A$ is real symmetric and each eigenspace has dimension one, it follows that $A$ must have $n$ distinct eigenvalues (i.e. the spectrum of $A$ is simple).  Thus, $(A,b)$ is controllable by the criterion test given in \cite[Lemma 1]{OT}.  
\end{proof}

Using the PBH criterion, we obtain the following equivalence.  

\begin{lemma} \label{lemma:PBHtest2}
Let $A$ be an $n \times n$ real symmetric matrix.  Let $b \in \mathbb{R}^n$ and $\gamma \in \mathbb{R}$.  Then $(A,b)$ is controllable if and only if $(A + \gamma b b^\mathrm{T}, b)$ is controllable.  
\end{lemma}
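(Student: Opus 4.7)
The plan is to apply the PBH criterion (Lemma \ref{lemma:PBHtest}) to both pairs $(A,b)$ and $(A+\gamma bb^{\mathrm{T}},b)$, noting first that $A + \gamma bb^{\mathrm{T}}$ is real symmetric whenever $A$ is, so Lemma \ref{lemma:PBHtest} applies to it. I will prove the contrapositive equivalence: $(A,b)$ is uncontrollable if and only if $(A+\gamma bb^{\mathrm{T}},b)$ is uncontrollable.

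The key observation is that the rank-one perturbation $\gamma bb^{\mathrm{T}}$ acts trivially on any vector orthogonal to $b$. Concretely, for any $v \in \mathbb{R}^n$ with $v^{\mathrm{T}} b = 0$, one has $(\gamma bb^{\mathrm{T}}) v = \gamma b (b^{\mathrm{T}} v) = 0$, and hence
\[
(A + \gamma bb^{\mathrm{T}}) v = A v.
\]
This means the set of eigenvectors lying in $b^{\perp}$ coincides for $A$ and for $A+\gamma bb^{\mathrm{T}}$, together with the associated eigenvalues.

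Given this, the forward direction is immediate: if $v$ is an eigenvector of $A$ with $v^{\mathrm{T}} b = 0$ and eigenvalue $\lambda$, then the displayed identity gives $(A+\gamma bb^{\mathrm{T}})v = \lambda v$, so $v$ is also an eigenvector of $A+\gamma bb^{\mathrm{T}}$ orthogonal to $b$, and Lemma \ref{lemma:PBHtest} yields that $(A+\gamma bb^{\mathrm{T}},b)$ is uncontrollable. The reverse direction is entirely symmetric: if $w$ is an eigenvector of $A+\gamma bb^{\mathrm{T}}$ with $w^{\mathrm{T}} b = 0$ and eigenvalue $\mu$, then $Aw = (A+\gamma bb^{\mathrm{T}})w = \mu w$, so $w$ is an eigenvector of $A$ orthogonal to $b$, and $(A,b)$ is uncontrollable by Lemma \ref{lemma:PBHtest}.

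There is no real obstacle here: the proof amounts to the one-line identity $(\gamma bb^{\mathrm{T}})v = 0$ for $v \perp b$ combined with the PBH criterion. The only small point to be careful about is ensuring symmetry of the perturbed matrix so that Lemma \ref{lemma:PBHtest} applies; this is automatic since $(bb^{\mathrm{T}})^{\mathrm{T}} = bb^{\mathrm{T}}$.
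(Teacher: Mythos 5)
Your proof is correct and takes essentially the same approach as the paper: both apply the PBH criterion to each pair and use the fact that $(\gamma b b^{\mathrm{T}})v = 0$ whenever $b^{\mathrm{T}} v = 0$, so the two matrices share the same eigenvectors orthogonal to $b$ (with the same eigenvalues). You simply spell out the middle equivalence in the paper's chain of ``iff''s more explicitly, including the check that $A + \gamma bb^{\mathrm{T}}$ remains symmetric.
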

\begin{proof}
Applying the PBH criterion (Lemma \ref{lemma:PBHtest}) twice, we obtain
\begin{align*}
	(A,b) \text{ is uncontrollable} &\iff \exists \lambda, v \text{ such that } v \neq 0, A v = \lambda v, \text{ and } b^\mathrm{T} v = 0 \\
	&\iff \exists \lambda, v \text{ such that } v \neq 0, (A + \gamma b b^\mathrm{T})v = \lambda v, \text{ and } b^\mathrm{T} v = 0 \\
	&\iff (A + \gamma b b^\mathrm{T}, b) \text{ is uncontrollable},
\end{align*}
as required.  
\end{proof}

We now prove Theorem \ref{thm:Gn12q}.  Let $A_n$ be the adjacency matrix of $G(n,1/2,q)$.  Let $\xi$ be a Bernoulli random variable, which takes values $\pm 1$ with probability $1/2$, and let $\zeta$ be the random variable
$$ \zeta := \left\{
     \begin{array}{rl}
       1, & \text{ with probability } q, \\
       -1, & \text{ with probability } 1 - q.
     \end{array}
   \right. $$
Let $W$ be the $n \times n$ Wigner matrix with atom variables $\xi$ and $\zeta$.  Since $\1_n \1_n^\mathrm{T}$ is the all-ones matrix, it follows that $A_n$ has the same distribution as $\frac{1}{2} ( W + \1_n \1_n^\mathrm{T} )$.  Thus, it suffices to bound the probability that $ \left( \frac{1}{2}  ( W + \1_n \1_n^\mathrm{T} ), \1_n \right)$ is controllable.  By invariance of scaling, this is equivalent to the probability that $(W + \1_n \1_n^\mathrm{T}, \1_n)$ is controllable.  By Lemma \ref{lemma:PBHtest2}, this is the same as the probability that $(W,\1_n)$ is controllable.  The claim now follows from Corollary \ref{cor:subgaussian:deterministic}.  Here, Corollary \ref{cor:subgaussian:deterministic} is applicable because $\xi$ and $\zeta$ are sub-gaussian random variables and $\xi$ is symmetric with unit variance.  In addition, $\1_n$ is $(K, \delta)$-delocalized for all $K \geq 1$ and every $\delta \in (0,1)$.

\section{Reduction to a small ball probability involving eigenvectors}

It remains to verify Theorems \ref{thm:main:deterministic} and \ref{thm:main:random}.  In this section, we begin with a few reductions, which will reduce the proofs of these two theorems to a problem involving the structure of the eigenvectors of the Wigner matrix $W$.  

\subsection{Wigner matrices have simple spectrum}
Recall that the spectrum of a real symmetric matrix is real.  We say the spectrum is \emph{simple} if all eigenvalues have multiplicity one.  Recently Tao and Vu \cite{TVsimple} verified that Wigner random matrices have simple spectrum with high probability.  

\begin{theorem}[Tao-Vu] \label{thm:simple}
Let $\xi$ and $\zeta$ be real random variables.  Assume $\xi$ satisfies \eqref{eq:rvassump} for some $\eps_0, p_0, K_0 > 0$.  Let $W$ be an $n \times n$ Wigner matrix with atom variables $\xi$ and $\zeta$.  Then, for every $\alpha > 0$, there exists $C > 0$ (depending on $\eps_0, p_0, K_0$, and $\alpha$) such that the spectrum of $W$ is simple with probability at least $1 - C n^{-\alpha}$.  
\end{theorem}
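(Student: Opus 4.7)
The plan is to follow a deletion-plus-Littlewood--Offord strategy, reducing simple spectrum to a small-ball estimate for a linear form in independent random variables. The first step is a geometric reduction: if the spectrum of $W$ is not simple, some eigenspace has dimension at least two, and hence intersects the hyperplane $\{v \in \mathbb{R}^n : v_n = 0\}$ nontrivially. Writing
$$ W = \begin{pmatrix} W' & X \\ X^\mathrm{T} & w_{nn} \end{pmatrix}, $$
where $W'$ is the top-left $(n-1) \times (n-1)$ principal submatrix and $X \in \mathbb{R}^{n-1}$ consists of the off-diagonal entries of the last column, this nontrivial intersection is equivalent to the existence of a unit eigenvector $u$ of $W'$ satisfying $X \cdot u = 0$. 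Consequently
$$ \Prob\bigl(W \text{ has a repeated eigenvalue}\bigr) \leq \Prob\bigl( X \cdot u = 0 \text{ for some eigenvector } u \text{ of } W' \bigr), $$
and by construction $X$ is independent of $W'$.

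Next, I would condition on $W'$. Almost surely $W'$ has $n-1$ orthonormal eigenvectors $u^{(1)}, \ldots, u^{(n-1)}$, and a union bound reduces the task to showing that, outside an exceptional event of probability $O(n^{-\alpha})$ in $W'$, each conditional small-ball probability $\Prob(X \cdot u^{(i)} = 0 \mid W')$ is at most $O(n^{-\alpha - 1})$.

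The main step is this small-ball estimate, which is where the hard work lies. The crude Erd\"os--Littlewood--Offord bound of $O(n^{-1/2})$ is far from sufficient for the required power-law decay. Instead, one must invoke the inverse Littlewood--Offord philosophy: a large small-ball probability for $X \cdot u^{(i)}$ would force $u^{(i)}$ to carry strong additive structure (small essential least common denominator, in the sense of Rudelson--Vershynin). It therefore suffices to show that, with probability $1 - O(n^{-\alpha})$, no eigenvector of $W'$ exhibits such structure, which in turn follows from the no-gaps delocalization of Wigner eigenvectors \cite{RVnogaps} combined with a standard $\eps$-net argument over the set of structured directions on the sphere $S^{n-2}$.

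The main obstacle is precisely this last step: extracting a polynomially small bound uniformly over all $n-1$ eigenvectors requires the full strength of the Littlewood--Offord inverse machinery together with eigenvector non-structure results. The geometric reduction and the union bound are routine by comparison; the substance of the argument resides in the arithmetic non-rigidity of Wigner eigenvectors that underlies \cite{TVsimple}.
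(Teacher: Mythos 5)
Your proposal is correct in outline, but it takes a fundamentally different route from the paper, and the difference is instructive. The paper treats Theorem \ref{thm:simple} as an essentially cited result: it invokes \cite[Theorem 1.3]{TVsimple} directly and reduces the entire proof to the short Proposition \ref{prop:tvcheck}, which verifies that Assumption \ref{assump:nondeg} implies the weak non-degeneracy condition $\sup_{u \in \mathbb{R}} \Prob(\xi = u) \leq 1 - \eta$ required by Tao and Vu. That is the whole argument in the paper --- a one-page hypothesis check, not a reproof.

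What you are sketching, by contrast, is a from-scratch derivation of the Tao--Vu result itself. Your geometric reduction (a repeated eigenvalue forces an eigenspace of dimension $\geq 2$, which must meet the hyperplane $\{v_n = 0\}$, yielding an eigenvector $u$ of the minor $W'$ with $X \cdot u = 0$) is correct, and the decomposition into a union bound over the $n-1$ eigenvectors of $W'$ plus a small-ball estimate conditional on $W'$ is exactly the right scaffold. It is also, not coincidentally, the same machinery the paper does develop in Sections 5--9 (compressibility, regularized LCD, Theorem \ref{thm:structure}, Theorem \ref{thm:vectors:random}) to handle the separate problem of $v^\mathrm{T} b = 0$ for an eigenvector $v$ of $W$ and an independent vector $b$; your $(W', X)$ plays the role of their $(W, b)$. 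What your sketch leaves unproved --- the uniform polynomial lower bound on the LCD of eigenvectors of $W'$, needed to push each conditional small-ball probability down to $O(n^{-\alpha-1})$ --- is precisely the substance of \cite{TVsimple} and of the paper's later Theorem \ref{thm:structure}. So your route would work, and it buys self-containment, but at the cost of re-deriving a result the authors deliberately quote as a black box. If you want to match the paper, the only thing to check is Proposition \ref{prop:tvcheck}: that $\Prob(|\xi - \xi'| \leq \eps_0) \leq 1 - p_0$ rules out $\Prob(\xi = u) = 1 - o(1)$, which follows from $\left[\Prob(\xi = u)\right]^2 \leq \Prob(\xi = \xi') \leq 1 - p_0$.
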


Theorem \ref{thm:simple} follows almost immediately from \cite[Theorem 1.3]{TVsimple}.  Indeed, \cite[Theorem 1.3]{TVsimple} only requires that $\xi$ satisfy a weak non-degeneracy condition.  The proposition below shows that Assumption \ref{assump:nondeg} implies this non-degeneracy condition.

\begin{proposition} \label{prop:tvcheck}
Let $\xi$ be a real random variable which satisfies \eqref{eq:rvassump} for some $\eps_0, p_0, K_0 > 0$.  Then there exists $\eta > 0$ (depending on $p_0$) such that
$$ \sup_{u \in \mathbb{R}} \Prob( \xi = u ) \leq 1 - \eta. $$
\end{proposition}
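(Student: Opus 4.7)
The plan is a short independence-plus-collision argument that extracts the atom bound from the first inequality in \eqref{eq:rvassump}; the tail bound involving $K_0$ plays no role. Intuitively, if $\xi$ placed mass very close to $1$ on some single point $u^*$, then an independent copy $\xi'$ would also hit $u^*$ with nearly full probability, forcing $\xi - \xi' = 0$ with probability exceeding $1 - p_0$, contradicting the assumption.

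To make this precise, I would set $M := \sup_{u \in \R} \Prob(\xi = u)$ and fix an arbitrary atom candidate $u^* \in \R$. By independence of $\xi$ and $\xi'$,
$$
\Prob(\xi = u^*)^2 \;=\; \Prob(\xi = u^*,\,\xi' = u^*) \;\leq\; \Prob(\xi - \xi' = 0) \;\leq\; \Prob(|\xi - \xi'| \leq \eps_0) \;\leq\; 1 - p_0,
$$
where the last inequality uses \eqref{eq:rvassump}. Taking the supremum over $u^*$ yields $M^2 \leq 1 - p_0$, hence $M \leq \sqrt{1 - p_0}$. It then suffices to choose $\eta := 1 - \sqrt{1 - p_0}$, which is strictly positive whenever $p_0 > 0$ and depends only on $p_0$.

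There is no real obstacle here; the only mild subtlety is that the supremum defining $M$ need not be attained at a single point, but that is handled automatically by taking the supremum after the displayed inequality rather than before. One could alternatively invoke $\Prob(\xi = \xi') = \sum_u \Prob(\xi = u)^2 \geq M^2$ (since the set of atoms is at most countable), but the single-atom bound above is all that is needed.
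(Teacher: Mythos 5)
Your proof is correct and uses essentially the same idea as the paper: independence of $\xi$ and $\xi'$ combined with the anti-concentration bound to control the largest atom. The paper argues by contradiction with the slightly weaker constant $\eta = 1 - \sqrt{1 - p_0/2}$, whereas your direct version yields the marginally better $\eta = 1 - \sqrt{1 - p_0}$; both are fine.
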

\begin{proof}
Set $\eta := 1 - \sqrt{1 - p_0 / 2}$, and assume there exists $u \in \mathbb{R}$ such that $\Prob( \xi = u ) > 1 - \eta$.  Then, from the first bound in \eqref{eq:rvassump} and the independence of $\xi$ and $\xi'$, we have
$$ 1 - p_0 \geq \Prob( \xi = \xi') \geq \Prob( \xi = u, \xi' = u) = \left[ \Prob( \xi = u) \right]^2 \geq 1 - p_0 / 2, $$
a contradiction.  We conclude that $\Prob( \xi = u ) \leq 1 - \eta$ for all $u \in \mathbb{R}$.    
\end{proof}

Theorem \ref{thm:simple} now follows from \cite[Theorem 1.3]{TVsimple} and Proposition \ref{prop:tvcheck}.  

\subsection{Reduction to a small ball probability}
Using Theorem \ref{thm:simple}, we reduce the proof of Theorems \ref{thm:main:deterministic} and \ref{thm:main:random} to a problem involving the eigenvectors of $W$.  

\begin{theorem} \label{thm:reduction}
Let $\xi$ and $\zeta$ be real random variables.  Assume $\xi$ satisfies \eqref{eq:rvassump} for some $\eps_0, p_0, K_0 > 0$.  Let $W$ be an $n \times n$ Wigner matrix with atom variables $\xi$ and $\zeta$.  Let $M \geq 1$, and suppose the event $\mathcal{B}_{W, M}$ (defined in \eqref{eq:def:BWM}) holds with probability at least $1/2$.  Let $b$ be a vector in $\mathbb{R}^n$, independent of $W$.  Assume there exists $p$ such that every unit eigenvector $v$ of $W$ satisfies
\begin{equation} \label{eq:dotproduct0}
	\Prob \left( v^\mathrm{T} b =  0 \relmiddle| \mathcal{B}_{W,M} \right) \leq p. 
\end{equation}
Then, for every $\alpha > 0$, there exists $C > 0$ (depending on $\eps_0, p_0, K_0$, and $\alpha$) such that, conditionally on $\mathcal{B}_{W, M}$, $(W,b)$ is controllable with probability at least 
$$ 1 - np - C n^{-\alpha}. $$  
\end{theorem}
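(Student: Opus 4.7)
The plan is to combine the PBH criterion (Lemma \ref{lemma:PBHtest}) with the simple-spectrum bound of Theorem \ref{thm:simple} and a union bound over the $n$ eigenvectors of $W$. By Lemma \ref{lemma:PBHtest}, $(W,b)$ is uncontrollable exactly when some unit eigenvector $v$ of $W$ satisfies $v^\mathrm{T} b = 0$, so it suffices to bound the conditional probability of this event given $\mathcal{B}_{W,M}$.

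First I would restrict to the event $\mathcal{E}_{\mathrm{simple}}$ that $W$ has $n$ distinct eigenvalues. On $\mathcal{E}_{\mathrm{simple}}$, the matrix $W$ admits an orthonormal eigenbasis $v_1(W), \ldots, v_n(W)$ in which each eigenspace is one-dimensional, so each $v_i$ is determined up to sign; since $v_i^\mathrm{T} b = 0$ and $(-v_i)^\mathrm{T} b = 0$ describe the same event, the PBH criterion reduces uncontrollability on $\mathcal{E}_{\mathrm{simple}}$ to the occurrence of $\bigcup_{i=1}^n \{v_i^\mathrm{T} b = 0\}$. By Theorem \ref{thm:simple}, $\Prob(\mathcal{E}_{\mathrm{simple}}^c) \leq C n^{-\alpha}$ for a constant $C$ depending on $\eps_0, p_0, K_0, \alpha$; combined with the hypothesis $\Prob(\mathcal{B}_{W,M}) \geq 1/2$, this yields $\Prob(\mathcal{E}_{\mathrm{simple}}^c \mid \mathcal{B}_{W,M}) \leq 2 C n^{-\alpha}$.

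Next I would apply the assumed small-ball bound \eqref{eq:dotproduct0} to each $v_i$ and union bound:
$$ \Prob\!\left( \bigcup_{i=1}^n \{v_i^\mathrm{T} b = 0\} \relmiddle| \mathcal{B}_{W,M} \right) \leq \sum_{i=1}^n \Prob\!\left( v_i^\mathrm{T} b = 0 \relmiddle| \mathcal{B}_{W,M} \right) \leq np. $$
Adding the two contributions gives $\Prob((W,b)\text{ uncontrollable} \mid \mathcal{B}_{W,M}) \leq np + 2C n^{-\alpha}$, and absorbing the factor $2$ into the constant produces the stated bound.

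There is no serious obstacle in this argument: it is essentially bookkeeping once Lemma \ref{lemma:PBHtest} and Theorem \ref{thm:simple} are in hand. The only minor point requiring care is the interpretation of ``every unit eigenvector $v$'' in the hypothesis, which on $\mathcal{E}_{\mathrm{simple}}$ we handle by applying \eqref{eq:dotproduct0} to each of the $n$ well-defined random unit eigenvectors $v_i(W)$ separately; the contribution off $\mathcal{E}_{\mathrm{simple}}$ is already absorbed into the $n^{-\alpha}$ term. The real difficulty of the paper—verifying the hypothesis \eqref{eq:dotproduct0} itself for concrete deterministic and random choices of $b$—is deferred to later sections, where Littlewood--Offord type tools of Rudelson and Vershynin will be invoked.
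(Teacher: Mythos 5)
Your proposal is correct and follows essentially the same route as the paper: restrict to the simple-spectrum event via Theorem \ref{thm:simple}, invoke the PBH criterion of Lemma \ref{lemma:PBHtest} to reduce uncontrollability to the vanishing of some $v_i^\mathrm{T} b$, union bound over the $n$ orthonormal eigenvectors (using that sign ambiguity is harmless), and use $\Prob(\mathcal{B}_{W,M}) \geq 1/2$ to convert the unconditional non-simple-spectrum bound into a conditional one at the cost of a factor $2$. The only cosmetic difference is that the paper splits the conditional probability directly into an intersection-with-simple-spectrum term plus a ratio $\Prob(\text{not simple})/\Prob(\mathcal{B}_{W,M})$, whereas you condition on $\mathcal{E}_{\mathrm{simple}}$ first and then union bound; both yield the same $np + 2Cn^{-\alpha}$, with the $2$ absorbed into the final constant.
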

\begin{proof}
Let $\alpha > 0$.  By Theorem \ref{thm:simple}, there exists $C > 0$ (depending on $\eps_0, p_0, K_0$, and $\alpha$) such that the spectrum of $W$ is simple with probability at least $1 - C n^{-\alpha}$.  We will utilize the PBH test (Lemma \ref{lemma:PBHtest}) to show that $(W,b)$ is controllable.  In particular, when the spectrum of $W$ is simple, the unit eigenvectors of $W$ are determined uniquely up to sign.  Observe that the choice of sign for each eigenvector $v$ does not effect whether the dot product $v^\mathrm{T} b$ is zero or not.  Thus, when the spectrum of $W$ is simple, the PBH test only requires testing the $n$ orthonormal eigenvectors of $W$ (where the choice of sign for each vector is arbitrary).  Thus, by the union bound over the $n$ orthonormal eigenvectors, we obtain
\begin{align*}
	\Prob &\left( (W,b) \text{ is uncontrollable } \relmiddle| \mathcal{B}_{W, M} \right) \\
	&\leq \Prob \left( (W,b) \text{ is uncontrollable and the spectrum of } W \text{ is simple } \relmiddle| \mathcal{B}_{W,M} \right) \\
	&\qquad + \frac{\Prob \left( \text{spectrum of } W \text{ is not simple} \right)}{\Prob( \mathcal{B}_{W,M} )} \\
	&\leq np + 2 C n^{-\alpha}.
\end{align*} 
Here, in the last inequality, we used the assumption that $\mathcal{B}_{W,M}$ holds with probability at least $1/2$.  
\end{proof}

In view of Theorem \ref{thm:reduction}, the proof of Theorems \ref{thm:main:deterministic} and \ref{thm:main:random} reduces to computing $p$ such that \eqref{eq:dotproduct0} holds for all unit eigenvectors $v$.  In particular, Theorems \ref{thm:main:deterministic} and \ref{thm:main:random} will follow from the following two results.  

\begin{theorem} \label{thm:vectors:deterministic}
Let $\xi$ and $\zeta$ be real random variables.  Assume $\xi$ is a symmetric random variable which satisfies \eqref{eq:rvassump} for some $\eps_0, p_0, K_0 > 0$.  Let $W$ be an $n \times n$ Wigner matrix with atom variables $\xi$ and $\zeta$.  Let $M \geq 1$, and suppose the event $\mathcal{B}_{W, M}$ holds with probability at least $1/2$.  Fix $K \geq 1$ and $\alpha > 0$.  Then, there exist constants $C > 0$ and $\delta \in (0,1)$ (depending on $\eps_0, p_0, K_0, M, K$ and $\alpha$) such that the following holds.  Let $b$ be a deterministic vector in $\mathbb{R}^n$ which is $(K,\delta)$-delocalized.  Then, for any unit eigenvector $v$ of $W$, 
$$ \Prob \left( v^\mathrm{T} b = 0 \relmiddle| \mathcal{B}_{W,M} \right) \leq C n^{-\alpha}. $$
\end{theorem}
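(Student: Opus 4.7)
The approach is to convert the event $\{v^T b = 0\}$ into a small-ball probability event for a linear form in iid entries of $W$, then invoke the refined Littlewood--Offord inequalities of Rudelson--Vershynin \cite{RVlo, Vsym}. The no-gaps delocalization theorem for eigenvectors of Wigner matrices \cite{RVnogaps} supplies the structural control on $v$ that enables the reduction.

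Fix an index $i_0 \in [n]$ for which $b_{i_0}$ is a ``good'' coordinate in the sense of Definition \ref{def:delocalized}; by $(K,\delta)$-delocalization, at least $n - \lfloor \delta n \rfloor$ such choices are available. Split $W$ into block form with the $i_0$-th row and column singled out, and let $X := (w_{i_0, j})_{j \neq i_0}$ denote the vector of off-diagonal entries in that row, which consists of iid copies of $\xi$. Condition on the $(n-1) \times (n-1)$ minor $W' := W_{[n]\setminus\{i_0\}, [n]\setminus\{i_0\}}$ and on $w_{i_0 i_0}$; the randomness still at play is carried by $X$, while the eigenvalue $\lambda$ attached to $v$ becomes a function of $X$ through the Schur complement.

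Using the rows of $Wv = \lambda v$ other than the $i_0$-th, we obtain $(W' - \lambda I) v' = -v_{i_0} X$, where $v' := v_{[n] \setminus \{i_0\}}$. On the high-probability event $\lambda \notin \mathrm{spec}(W')$ (with the complementary event absorbed into the $O(n^{-\alpha})$ error budget), inverting this relation and substituting into $v^T b = v_{i_0} b_{i_0} + (v')^T b' = 0$---then dividing through by $v_{i_0}$, which is nonzero by no-gaps delocalization---reduces the orthogonality condition to the scalar equation $X^T u = b_{i_0}$, where $u := (W' - \lambda I)^{-T} b'$ and $b' := b_{[n]\setminus\{i_0\}}$. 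This recasts $\{v^T b = 0\}$ as a small-ball event for the linear combination $X^T u$ of iid symmetric random variables. Applying the Rudelson--Vershynin Littlewood--Offord inequality \cite{RVlo, Vsym} then yields the bound $\Prob(X^T u = b_{i_0}) \leq C n^{-\alpha}$, provided one establishes that $u$ has no strong arithmetic structure (equivalently, large essential LCD); such structure will be extracted by combining the delocalization of $b'$ (many generic comparable rational coordinates of bounded height) with no-gaps delocalization of the eigenvectors of $W'$. The symmetry of $\xi$ is used decisively here, which is why it appears in the hypothesis.

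The main obstacle is that $u$ depends on the random eigenvalue $\lambda$, which itself depends on $X$ through the Schur complement $\lambda = w_{i_0 i_0} - X^T (W' - \lambda I)^{-1} X$. The resolvent is ill-conditioned precisely when $\lambda$ is close to $\mathrm{spec}(W')$, and by Cauchy interlacing such closeness is forced by the interlacing eigenvalue of $W'$ trapped inside the spectral gap of $W$ at $\lambda$. Disentangling this self-referential dependence---likely via a case split on $\dist(\lambda, \mathrm{spec}(W'))$, combined with eigenvalue rigidity for Wigner matrices and a net argument over candidate values of $\lambda$---constitutes the heaviest technical work of the proof.
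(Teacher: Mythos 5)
Your reduction is algebraically correct as far as it goes: writing $v' = -v_{i_0}(W'-\lambda I)^{-1}X$ from the off-$i_0$ rows of $Wv=\lambda v$, and substituting into $v^{\mathrm T}b=0$, does give $X^{\mathrm T}u = b_{i_0}$ with $u=(W'-\lambda I)^{-\mathrm T}b'$, provided $v_{i_0}\neq 0$ and $\lambda\notin\mathrm{spec}(W')$. But this is a genuinely different route from the paper, and in its present form it has two gaps that you flag but do not close, plus one you do not flag.

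The first, which you acknowledge, is fatal as stated: the vector $u$ is not a fixed coefficient vector for the linear form $X^{\mathrm T}u$. Both $\lambda$ and $u$ are functions of $X$ through the self-consistent equation $\lambda = w_{i_0 i_0} - X^{\mathrm T}(W'-\lambda I)^{-1}X$, so you cannot condition on $W'$ and treat $X^{\mathrm T}u$ as a sum of iid $\xi$'s against a fixed vector; Corollary~\ref{cor:small} and its relatives require exactly that independence. A case split on $\dist(\lambda,\mathrm{spec}(W'))$ plus a net over candidate $\lambda$'s is plausible in spirit but is a nontrivial analysis that you would have to actually carry out, and the interlacing obstruction you mention makes the favourable regime (resolvent well conditioned) unreachable: for the eigenvalue of $W$ trapped between two eigenvalues of $W'$, interlacing forces $\dist(\lambda,\mathrm{spec}(W'))$ to be at most half the local gap, which for a Wigner matrix is $O(n^{-1/2})$ in the bulk, so $\|(W'-\lambda I)^{-1}\|\gtrsim n^{1/2}$ generically. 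Second, even granting a fixed $u$, you assert that $u$ has large LCD by ``combining the delocalization of $b'$ with no-gaps delocalization of the eigenvectors of $W'$,'' but $u$ is a resolvent applied to $b'$ and is dominated by the one or two eigenvectors of $W'$ nearest $\lambda$; establishing an LCD lower bound for such a vector is not a corollary of anything cited and is essentially as hard as the structural theorem the paper actually proves (Theorem~\ref{thm:structure}). Third, a smaller point: you divide by $v_{i_0}$ and justify $v_{i_0}\neq 0$ by no-gaps delocalization, but that result controls coordinates on a large \emph{fraction} of indices, not a prescribed index; you would need a counting argument matching good $b$-coordinates against not-too-small $v$-coordinates.

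The paper sidesteps all of this with a symmetrization trick that uses the hypothesis that $\xi$ is symmetric in a much more direct way: replace $W$ by $W''=(\psi_i\psi_j w_{ij})$ for iid Rademacher signs $\psi_i$ independent of $W$. Since $\xi$ is symmetric, $W''\overset{d}{=}W$, and the corresponding eigenvector is $(\psi_i v_i)_i$, so $\Prob(v\cdot b=0)$ is, up to the negligible probability that the spectrum is not simple, equal to $\Prob(\sum_k \psi_k v_k b_k =0)$. One can then condition on $W$ (hence on $v$) and run Littlewood--Offord over the \emph{fresh}, manifestly independent Rademacher randomness $\psi$, with the structural input (incompressibility and large regularized LCD of $v$) supplied by Lemma~\ref{lemma:complower} and Theorem~\ref{thm:structure}, and the delocalization of $b$ absorbed via the rational-coefficient small-ball bound in Theorem~\ref{thm:smalldelocalized}. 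This removes the self-referential dependence entirely and is where the symmetry hypothesis is ``used decisively''---not in making $X^{\mathrm T}u$ symmetric as you suggest.
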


\begin{theorem} \label{thm:vectors:random}
Let $\xi$ and $\zeta$ be real random variables.  Assume $\xi$ satisfies \eqref{eq:rvassump} for some $\eps_0, p_0, K_0 > 0$.  Let $W$ be an $n \times n$ Wigner matrix with atom variables $\xi$ and $\zeta$.  Let $M \geq 1$, and suppose the event $\mathcal{B}_{W, M}$ holds with probability at least $1/2$.  Let $\alpha > 0$, and assume $b$ is a random vector in $\mathbb{R}^n$, independent of $W$, which satisfies Assumption \ref{assump:random}.  Then, there exists a  constant $C > 0$ (depending on $\eps_0, p_0, K_0, M, \eps_1, p_1, K_1$, and $\alpha$) such that, for any unit eigenvector $v$ of $W$, 
\begin{equation} \label{eq:randomvector}
	\Prob \left( v^\mathrm{T} b = 0 \relmiddle| \mathcal{B}_{W,M} \right) \leq C n^{-\alpha}. 
\end{equation}
\end{theorem}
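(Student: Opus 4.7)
The plan is to condition on $W$, so that the eigenvector $v$ becomes a random realization of a deterministic unit vector while, by independence, the coordinates $b_1,\ldots,b_n$ of $b$ remain iid copies of $\psi$ satisfying Assumption \ref{assump:random}. By the tower property, it then suffices to produce an event $\mathcal{E}\subseteq\mathcal{B}_{W,M}$ with $\Prob(\mathcal{E}^c\mid\mathcal{B}_{W,M})\le Cn^{-\alpha}$ on which every unit eigenvector $v$ of $W$ satisfies
\[
\sup_{x\in\mathbb{R}} \Prob\!\Bigl(\sum_{i=1}^n v_i b_i = x \;\Big|\; v\Bigr) \;\le\; C n^{-\alpha}.
\]

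First I would establish an arithmetic no-structure property for eigenvectors of $W$: on $\mathcal{E}$, every unit eigenvector should have large least common denominator, say $D(v)\ge n^{\alpha+1}$. The natural route is to rewrite the eigenvector relation as $(W-\lambda I)v=0$, take an $\eps$-net over the admissible range of eigenvalues $\lambda\in[-M\sqrt n, M\sqrt n]$ (bounded thanks to $\mathcal{B}_{W,M}$), and at each net point invoke the small-ball/invertibility estimate for shifted Wigner matrices restricted to vectors of small LCD from \cite{Vsym, RVlo}. A union bound then yields a uniform LCD lower bound across the entire spectrum. Note that no-gaps delocalization \cite{RVnogaps} alone, which guarantees $|v_i|\ge c/\sqrt n$ on a large subset of indices, would only produce an $n^{-1/2}$ estimate in the next step via Kolmogorov--Rogozin, which is insufficient to reach arbitrary $\alpha$; the LCD formulation is what drives the polynomial improvement.

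Given such a $v$, I would next invoke the Rudelson--Vershynin small-ball inequality for random sums \cite{RVlo}: since $v$ is a unit vector with $D(v)\ge n^{\alpha+1}$ and the $b_i$ are iid with a bounded concentration function (the first bound in Assumption \ref{assump:random}), one obtains
\[
\sup_{x\in\mathbb{R}} \Prob\!\Bigl(\sum_{i=1}^n v_i b_i = x \;\Big|\; v\Bigr) \;\le\; \frac{C}{D(v)} \;\le\; Cn^{-\alpha-1}.
\]
Specializing to $x=0$ and integrating over $\mathcal{E}$ gives the required bound on $\Prob(v^\mathrm{T}b=0\mid\mathcal{B}_{W,M})$, with the bad event $\mathcal{E}^c\cap\mathcal{B}_{W,M}$ contributing at most $Cn^{-\alpha}$.

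The heart of the argument is the first step: controlling the LCD of eigenvectors uniformly across the spectrum. Since $v$ depends nontrivially on the whole of $W$, fixed-vector anti-concentration estimates cannot be applied directly, and the $\eps$-net over $\lambda$ requires each per-point small-ball bound to absorb both the spectrum-net cardinality and a further net over the sphere in the LCD regime of interest. This is precisely the ``structure versus randomness'' dichotomy that the recent work of Rudelson and Vershynin is engineered to handle, and I expect the bulk of the technical effort to go into converting their lower bounds on the smallest singular value of shifted Wigner matrices into a quantitative LCD lower bound for eigenvectors.
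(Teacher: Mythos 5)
Your proposal follows essentially the same approach as the paper: condition on $W$, prove an LCD lower bound for all eigenvectors simultaneously via a net over $\lambda\in[-M\sqrt n,M\sqrt n]$ combined with small-ball estimates for $(W-\lambda)x$ on small-LCD vectors (the paper's Theorem \ref{thm:structure}, which also first establishes incompressibility of eigenvectors via Lemma \ref{lemma:complower}), and then apply the Rudelson--Vershynin small-ball bound for $\sum v_i b_i$ in terms of the LCD. The only technical refinement you elide is that the paper works with the \emph{regularized} LCD $\widehat D_L(\cdot,\gamma)$ rather than the plain LCD, which is what makes the net over sub-level sets (Lemma \ref{lemma:LCDnet}) small enough for the union bound to close; otherwise your outline is correct and matches the paper's proof.
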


\begin{remark}
In fact, as the proofs will show, our methods yields bounds on the small ball probability 
$$ \Prob \left( |v^\mathrm{T} b| \leq t \relmiddle| \mathcal{B}_{W,M} \right) $$
for all unit eigenvectors $v$ and every value of $t \geq 0$.  
\end{remark}

\subsection{Outline of the argument}
The rest of the paper is devoted to the proof of Theorems \ref{thm:vectors:deterministic} and \ref{thm:vectors:random}.  Let us now outline the main idea of the proof.  For simplicity, we will consider Theorem \ref{thm:vectors:random} first.  

In order to bound the probability in \eqref{eq:randomvector}, we will need to consider the random sum
\begin{equation} \label{eq:sumS}
	S := \sum_{k=1}^n v_k b_k, 
\end{equation}
where $v = (v_1, \ldots, v_n)$ is an eigenvector of $W$ and $b = (b_1, \ldots, b_n)$ is a random vector, independent of $v$, with iid entries.  As we shall see, estimating $\Prob(S = 0)$ is a type of Littlewood--Offord problem (discussed more below).  In recent years, many authors have studied Littlewood--Offord and inverse Littlewood--Offord problems; we refer the reader to \cite{NV, RVnogaps, RVsing, RVlo, TVsimple, TVlo, Vsym} and references therein.  We will apply some of this theory to estimate $\Prob(S = 0)$.  

In particular, it has been observed that if $\Prob (S = 0 ) > n^{-\alpha}$, then the vector $v$ must have a rich additive structure.  Intuitively, this follows since the sum in \eqref{eq:sumS} can only concentrate near zero when most of the coefficients $v_k$ are arithmetically well comparable.  On the other hand, if we take $v$ to be an eigenvector of a Wigner matrix, one expects the vector $v$ to look random and not have any rigid structure.  

In order to make this intuition rigorous, we proceed in two steps.
\begin{enumerate}
\item We will first show that a bound for $\Prob ( S = 0 )$ depends on the additive structure of the eigenvector $v$.  In particular, we utilize the least common denominator (LCD) concept from \cite{RVnogaps,RVsing, RVlo, Vsym} to measure the additive structure of any vector $v$.  
\item We will then show that, with high probability, any eigenvector of $W$ has very little additive structure.  Specifically, we will show that the LCD of any eigenvector is quite large.  
\end{enumerate}
Finally, we will combine the steps above to complete the proof of Theorem \ref{thm:vectors:random}.  

The proof of Theorem \ref{thm:vectors:deterministic} is similar, but we do not have the randomness of the vector $b$.  In this case, we will use the symmetry of the atom variable $\xi$ in order to introduce additional randomness.

\section{Small ball probabilities via the least common denominator}

In this section, we introduce small ball probabilities and the least common denominator (LCD) concept from \cite{RVnogaps,RVsing, RVlo, Vsym}.  At first, these concepts may appear completely unrelated to Theorems \ref{thm:vectors:deterministic} and \ref{thm:vectors:random}.  In the subsequent sections, the connection between these ideas and the theorems will become more apparent.  

We begin this section with the following definition.  

\begin{definition}[Small ball probabilities]
Let $Z$ be a random vector in $\mathbb{R}^n$.  The \emph{L\'{e}vy concentration function} of $Z$ is defined as
$$ \Le(Z,t) := \sup_{u \in \mathbb{R}^n} \Prob( \|Z - u\| \leq t ) $$
for all $t \geq 0$.  
\end{definition}

The L\'{e}vy concentration function bounds the \emph{small ball probabilities} for $Z$, which are the probabilities that $Z$ falls in a Euclidean ball of radius $t$. We begin with a few simple properties.  

\begin{lemma}[Restriction] \label{lemma:restriction}
Let $\xi_1, \ldots, \xi_n$ be independent random variables and $x_1, \ldots, x_n$ real numbers.  Then, for every subset of indices $J \subseteq [n]$ and every $t \geq 0$, we have
$$ \Le \left( \sum_{j=1}^n x_j \xi_j, t \right) \leq \Le \left( \sum_{j \in J} x_j \xi_j, t \right). $$
\end{lemma}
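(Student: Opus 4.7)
The plan is to prove the restriction inequality by a standard conditioning argument, exploiting the independence of the summands. Let me outline the steps.

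First, I would split the full sum according to the subset $J$. Writing $S := \sum_{j=1}^n x_j \xi_j$, $S_J := \sum_{j \in J} x_j \xi_j$, and $R := \sum_{j \in J^c} x_j \xi_j$, I note that $S = S_J + R$, and that $S_J$ and $R$ are independent because the $\xi_j$'s are independent and the two sums involve disjoint index sets.

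Next, I would fix an arbitrary $u \in \mathbb{R}$ and condition on $R$. By independence and the tower property,
\begin{align*}
\Prob(|S - u| \leq t) &= \Prob(|S_J - (u - R)| \leq t) \\
&= \E\bigl[\Prob(|S_J - (u - R)| \leq t \mid R)\bigr] \\
&\leq \E\bigl[\Le(S_J, t)\bigr] = \Le(S_J, t),
\end{align*}
where the inequality uses the definition of the Lévy concentration function (once we condition on $R$, the value $u - R$ becomes a deterministic shift, and the supremum in the definition of $\Le(S_J, t)$ upper-bounds the conditional probability). Taking the supremum over $u$ on the left yields $\Le(S, t) \leq \Le(S_J, t)$, which is exactly the claimed inequality.

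There is no real obstacle here; the lemma is a direct consequence of independence plus the definition of the Lévy concentration function. The only minor care point is to justify the conditioning step properly: one should interpret the conditional probability $\Prob(\cdot \mid R)$ as the probability with $R$ held fixed (legitimate since $S_J \perp R$), after which bounding it by the supremum over all shifts is immediate. No additional hypotheses on the distributions of $\xi_j$ or on the $x_j$ are needed.
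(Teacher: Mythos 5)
Your proof is correct and is exactly the paper's argument: condition on the summands indexed by $J^c$ and absorb their contribution into the shift $u$ in the definition of the L\'evy concentration function. The paper merely states this in one line; you have written out the tower-property step explicitly.
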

\begin{proof}
This bound follows by conditioning on the random variables $\xi_j$ with $j \not\in J$ and absorbing their contribution into the vector $u$ in the definition of the concentration function.  
\end{proof}

\begin{lemma}[Tensorization; Lemma 3.3 from \cite{RVnogaps}] \label{lemma:tensor}
Let $Z = (Z_1, \ldots, Z_n)$ be a random vector in $\mathbb{R}^n$ with independent coordinates.  Assume that there exists numbers $t_0, M \geq 0$ such that
$$ \Le(Z_j, t) \leq M(t + t_0) $$
for all $j$ and $t \geq 0$.  Then
$$ \Le(Z, t \sqrt{n} ) \leq [C M (t + t_0) ]^n $$
for some absolute constant $C > 0$ and all $t \geq 0$.  
\end{lemma}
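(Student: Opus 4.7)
The plan is to reduce the multidimensional small ball probability to a product of single-coordinate expectations by exploiting independence, and then convert the hypothesis $\Le(Z_j,t)\leq M(t+t_0)$ into a bound on each factor.

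First I would fix an arbitrary $u=(u_1,\ldots,u_n)\in\mathbb{R}^n$ and set $X_j := Z_j - u_j$, so that the event $\{\|Z-u\|\leq t\sqrt{n}\}$ coincides with $\{\sum_j X_j^2\leq t^2 n\}$. The standard exponential Markov trick gives the pointwise inequality
$$\indicator{\sum_j X_j^2\leq t^2 n} \;\leq\; \exp\!\left(n-\tfrac{1}{t^2}\sum_j X_j^2\right),$$
since the exponent is nonnegative on the indicated event. Taking expectations and using independence of the $X_j$ factors the right-hand side to give
$$\Prob\!\left(\sum_j X_j^2\leq t^2 n\right) \;\leq\; e^n\prod_{j=1}^n \E\bigl[\exp(-X_j^2/t^2)\bigr].$$

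Next I would estimate each single-coordinate expectation in layer-cake form. A change of variables $s=e^{-w^2}$ in $\E[f(X_j)]=\int_0^1\Prob(f(X_j)>s)\,ds$ with $f(x)=e^{-x^2/t^2}$ yields
$$\E\bigl[\exp(-X_j^2/t^2)\bigr] \;=\; \int_0^\infty \Prob(|X_j|\leq tw)\cdot 2w\,e^{-w^2}\,dw.$$
Since $\Prob(|X_j|\leq tw)\leq \Le(Z_j,tw)\leq M(tw+t_0)$ by hypothesis, the integral is bounded by $M\int_0^\infty 2w(tw+t_0)\,e^{-w^2}\,dw$, which evaluates via elementary Gaussian moments to a universal constant times $M(t+t_0)$. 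Plugging back in and taking the supremum over $u$ completes the proof with $C$ being the product of $e$ and that Gaussian constant.

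I do not expect a real obstacle here; the whole argument is a one-page computation once the exponential-Markov/layer-cake pairing is set up. The only point that requires any care is tracking that both the combinatorial factor $e^n$ from Markov and the absolute Gaussian constant from the integral get absorbed into a single absolute constant $C$ independent of $M$, $t$, $t_0$, $n$, and the laws of the $Z_j$.
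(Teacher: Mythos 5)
Your proof is correct and follows the standard tensorization argument (exponential Markov inequality combined with the layer-cake/Gaussian-moment computation), which is precisely the argument behind Lemma 3.3 of \cite{RVnogaps}; the paper itself states the lemma by citation and gives no proof. The only point worth noting for completeness is the case $t=0$, which the Markov step does not cover directly, but it follows immediately from monotonicity of $\Le(Z,\cdot)$ by letting $t\to 0^+$ in the bound you established for $t>0$.
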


\subsection{Littlewood--Offord theory and the LCD}

Littlewood--Offord theory is concerned with the small ball probabilities for sums of the form $\sum_{j=1}^n x_j \xi_j$, where $\xi_j$ are iid random variables and $x = (x_1, \ldots, x_n) \in S^{n-1}$ is a given coefficient vector.  In order to bound the small ball probabilities, it has been observed that one must take into account the additive structure of the coefficient vector $x$.  

The amount of additive structure in $x \in S^{n-1}$ is captured by the \emph{least common denominator} (LCD) of $x$.  If the coordinates $x_k = p_k/q_k$ are rational numbers, then a suitable measure of additive structure in $x$ is the least common denominator of these ratios.  That is, the smallest number $\theta > 0$ such that $\theta x \in \mathbb{Z}^n$.  We now work with an extension of this concept developed in \cite{RVnogaps,RVsing, RVlo, Vsym}. 

\begin{definition}[LCD]
Let $L \geq 1$.  We define the \emph{least common denominator} (LCD) of $x \in S^{n-1}$ as
$$ D_L(x) := \inf\left\{ \theta > 0 : \dist(\theta x, \mathbb{Z}^n) < L \sqrt{ \log_{+} (\theta / L)} \right\}, $$
where $\dist(v, T) := \inf_{u \in T} \|v - u\|$ is the distance from a vector $v \in \mathbb{R}^n$ to a set $T \subseteq \mathbb{R}^n$.  
\end{definition}

Clearly, one always has $D_L(x) \geq L$.  Another simple bound is the following.

\begin{lemma}[Simple lower bound for LCD; Proposition 7.4 from \cite{RVnogaps}] \label{lemma:LCDinfinity}
For every $x \in S^{n-1}$ and every $L \geq 1$, one has
$$ D_L(x) \geq \frac{1}{2 \|x\|_{\infty}}, $$
where $\|x\|_\infty$ is the $\ell^\infty$-norm of the vector $x$.  
\end{lemma}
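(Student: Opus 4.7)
The plan is to exploit the observation that, when $\theta$ is small enough, each coordinate $\theta x_j$ lies strictly inside $(-1/2, 1/2)$, forcing the lattice distance from $\theta x$ to $\mathbb{Z}^n$ to collapse to the Euclidean norm $\|\theta x\| = \theta$. The desired inequality then falls out of a one-variable comparison.

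Concretely, I would fix $\theta \in \bigl(0, \tfrac{1}{2\|x\|_\infty}\bigr)$ and argue that $\theta$ cannot belong to the set defining $D_L(x)$. For every $j \in [n]$ we have $|\theta x_j| \leq \theta \|x\|_\infty < 1/2$, so the nearest integer to $\theta x_j$ is $0$ and $\dist(\theta x_j, \mathbb{Z}) = |\theta x_j|$. Since the lattice distance is computed coordinatewise,
\[ \dist(\theta x, \mathbb{Z}^n)^2 = \sum_{j=1}^n \theta^2 x_j^2 = \theta^2 \|x\|^2 = \theta^2, \]
using $x \in S^{n-1}$; hence $\dist(\theta x, \mathbb{Z}^n) = \theta$.

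For such a $\theta$ to witness membership in the defining set, one would need $\theta < L\sqrt{\log_+(\theta/L)}$. If $\theta \leq L$ the right-hand side vanishes and the inequality is impossible. If $\theta > L$, writing $s := \theta/L > 1$ transforms the inequality into $s^2 < \log s$, which contradicts the elementary fact that $s^2 > \log s$ for every $s > 0$: the map $s \mapsto s^2 - \log s$ is minimized on $(0,\infty)$ at $s = 1/\sqrt{2}$, where its value is $\tfrac{1}{2} + \tfrac{1}{2} \log 2 > 0$. So no $\theta < \tfrac{1}{2\|x\|_\infty}$ lies in the defining set, and $D_L(x) \geq \tfrac{1}{2\|x\|_\infty}$ by taking infimum.

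There is no substantial obstacle here; the only step worth flagging is recognizing that $\theta < \tfrac{1}{2\|x\|_\infty}$ is strong enough to force \emph{every} coordinate of $\theta x$ (not merely the extremal one $|x_i| = \|x\|_\infty$) to round to zero simultaneously, which is what makes the distance-to-lattice simplify to $\theta$ rather than only a lower bound $\theta \|x\|_\infty$. After this observation the rest is a one-line calculus comparison.
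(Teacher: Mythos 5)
Your proof is correct. The paper itself does not supply a proof of this lemma --- it simply cites it as Proposition 7.4 of Rudelson--Vershynin \cite{RVnogaps} --- so there is no in-text argument to compare against, but your argument is the standard one and it goes through cleanly: for $\theta < 1/(2\|x\|_\infty)$ every coordinate of $\theta x$ lies in $(-1/2,1/2)$, so the coordinatewise minimization over $\mathbb{Z}^n$ is achieved at the origin and $\dist(\theta x,\mathbb{Z}^n) = \|\theta x\| = \theta$, and the one-variable inequality $s^2 < \log s$ has no solution (for $s > 1$ one can even just note $\log s < s \leq s^2$, which is slightly more direct than your calculus minimization, but both are fine). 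You also correctly identify why the exact identity $\dist(\theta x,\mathbb{Z}^n)=\theta$, rather than the weaker lower bound $\theta\|x\|_\infty$, is needed to close the argument.
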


\subsection{Small ball probabilities via LCD}
We now obtain a bound for the small ball probabilities of the sum $\sum x_k \xi_k$ in terms of the LCD $D_L(x)$.  The bound below is similar to \cite[Lemma 7.5]{RVnogaps} and \cite[Theorem 6.3]{Vsym}.  However, for technical reasons, we will actually need to consider the more general sum $\sum a_k x_k \xi_k$, where $x = (x_1, \ldots, x_n) \in S^{n-1}$ and $a_1, \ldots, a_n$ are rational coefficients.  Because of these rational coefficients, we cannot apply the results from \cite{RVnogaps, Vsym} directly.  

\begin{theorem}[Small ball probabilities via LCD] \label{thm:small}
Let $\xi_1, \ldots, \xi_n$ be iid copies of a real random variable $\xi$ which satisfies \eqref{eq:rvassump} for some $\eps_0, p_0, K_0 > 0$.  Let $K > 0$.  Then there exists $C > 0$ (depending only on $\eps_0, p_0, K_0$ and $K$) such that the following holds.  Let $x = (x_1, \ldots, x_n) \in S^{n-1}$ and consider the sum $S := \sum_{k=1}^n a_k x_k \xi_k$, where the coefficients $(a_k)_{k=1}^n$ satisfy
$$ a_k^{-1} \in \mathbb{Z}, \quad |a_k| \geq K^{-1} \quad \text{for } k = 1, \ldots, n. $$
Then, for every $L \geq p_0^{-1/2} K$ and $ t \geq 0$, one has
\begin{equation} \label{eq:smallbnd}
	\Le(S, t) \leq C L \left( t + \frac{1}{D_L(x)} \right). 
\end{equation}
\end{theorem}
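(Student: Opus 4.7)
The plan is to adapt the Rudelson--Vershynin framework for LCD-based small-ball estimates (see \cite[Lemma 7.5]{RVnogaps} and \cite[Theorem 6.3]{Vsym}) to the present setting with rational coefficients $a_k$. First I would invoke Esseen's concentration inequality to reduce matters to bounding the characteristic function:
$$
\Le(S, t) \;\leq\; C\int_{|\theta|\leq 1}|\phi_S(\theta/t)|\, d\theta, \qquad |\phi_S(\theta)| \;=\; \prod_{k=1}^n |\phi_\xi(\theta a_k x_k)|,
$$
using independence of the $\xi_k$. The scalar inequality $|z|\leq e^{-(1-|z|^2)/2}$ for $|z|\leq 1$ combined with the symmetrization identity $1-|\phi_\xi(s)|^2 = \E[1-\cos(s\tilde\xi)]$ (where $\tilde\xi = \xi-\xi'$) then gives
$$
|\phi_S(\theta)| \;\leq\; \exp\Bigl(-\tfrac{1}{2}\sum_{k=1}^n \E\bigl[1-\cos(\theta a_k x_k \tilde\xi_k)\bigr]\Bigr),
$$
with $\tilde\xi_1,\ldots,\tilde\xi_n$ iid copies of $\tilde\xi$.

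From \eqref{eq:rvassump} and a union bound, $\Prob(\eps_0\leq |\tilde\xi_k|\leq 2K_0)\geq p_0/2$. Restricting the inner expectation to this event and using $1-\cos(y)\geq c\min(\dist(y/(2\pi),\mathbb{Z})^2,1)$ lower-bounds the exponent by a constant multiple of $p_0$ times
$$
\sum_{k=1}^n \E\!\left[\min\!\bigl(\dist\bigl(\theta a_k x_k \tilde\xi_k/(2\pi),\,\mathbb{Z}\bigr)^2,\;1\bigr)\,\mathbf{1}_{\{\eps_0\leq|\tilde\xi_k|\leq 2K_0\}}\right].
$$

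The critical step is the elimination of the $a_k$. Since $q_k := a_k^{-1}$ is a nonzero integer with $|q_k|\leq K$, one has $q_k\mathbb{Z}\subseteq\mathbb{Z}$, and hence for every $u\in\mathbb{R}$,
$$
\dist(a_k u,\mathbb{Z}) \;=\; |a_k|\,\dist(u, q_k\mathbb{Z}) \;\geq\; |a_k|\,\dist(u,\mathbb{Z}) \;\geq\; K^{-1}\,\dist(u,\mathbb{Z}).
$$
Together with the $\min(\cdot,1)$ cutoff this costs at most a factor of $K^{-2}$ and replaces $\dist(\theta a_k x_k \tilde\xi_k/(2\pi),\mathbb{Z})$ by $\dist(\theta x_k \tilde\xi_k/(2\pi),\mathbb{Z})$ throughout the exponent. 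The resulting sum is exactly the standard quantity appearing in the Rudelson--Vershynin analysis for $\sum_k x_k\xi_k$, so from here one may follow \cite{RVnogaps,Vsym} essentially verbatim: integration over $\theta$ together with the distribution of the $\tilde\xi_k$ on $[\eps_0,2K_0]$ converts the coordinatewise truncated distances into the global quantity $\dist(\theta x,\mathbb{Z}^n)$, and the defining property of $D_L(x)$ produces the bound $CL(t+1/D_L(x))$.

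The hypothesis $L\geq p_0^{-1/2}K$ is precisely what is needed to absorb the losses $p_0$ (from the anti-concentration step) and $K^{-2}$ (from the coefficient elimination) into the free parameter $L$ controlling the LCD. The main technical obstacle will be executing the coefficient-elimination cleanly in the presence of the $\min(\cdot,1)$ cutoff, and then verifying that the downstream Rudelson--Vershynin integral estimate---originally stated for the case $a_k=1$---goes through unchanged once the losses above have been absorbed into the enlarged value of $L$.
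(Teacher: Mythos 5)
Your proposal is correct and follows essentially the same route as the paper's proof: Esseen's lemma, symmetrization via $\bar\xi = \xi - \xi'$, conditioning on the event $\{\eps_0 \le |\bar\xi| \le 2K_0\}$, and the key coefficient-elimination step $\dist(a_k u,\mathbb{Z}) = |a_k|\dist(u,a_k^{-1}\mathbb{Z}) \ge K^{-1}\dist(u,\mathbb{Z})$ using $a_k^{-1}\in\mathbb{Z}$, after which the Rudelson--Vershynin LCD integral estimate applies with the losses absorbed by $L \ge p_0^{-1/2}K$. The only cosmetic differences are that the paper normalizes $\eps_0 = 1$ at the outset and does not introduce the $\min(\cdot,1)$ cutoff (which is harmless since $\dist(\cdot,\mathbb{Z}) \le 1/2$ always), and it makes the Jensen step explicit where you defer to the cited references.
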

\begin{remark}
We emphasis that the right-hand side of \eqref{eq:smallbnd} does not depend on the values of the coefficients $a_k$.  In particular, the LCD $D_L(x)$ only depends on the coefficient vector $x$.  
\end{remark}

When all of the coefficients $a_k = 1$, we obtain \cite[Theorem 6.3]{Vsym}, which we now state as a corollary.   

\begin{corollary} \label{cor:small}
Let $\xi_1, \ldots, \xi_n$ be iid copies of a real random variable $\xi$ which satisfies \eqref{eq:rvassump} for some $\eps_0, p_0, K_0 > 0$.  Then there exists $C > 0$ (depending only on $\eps_0, p_0$, and $K_0$) such that the following holds.  Let $x = (x_1, \ldots, x_n) \in S^{n-1}$ and consider the sum $S := \sum_{k=1}^n x_k \xi_k$.  Then, for every $L \geq p_0^{-1/2}$ and $ t \geq 0$, one has
$$ \Le(S, t) \leq C L \left( t + \frac{1}{D_L(x)} \right). $$
\end{corollary}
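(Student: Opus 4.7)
The corollary is an immediate specialization of Theorem~\ref{thm:small}, obtained by setting all coefficients $a_k$ equal to $1$ (and taking $K = 1$). The plan is simply to verify that the hypotheses of Theorem~\ref{thm:small} are met under this choice and to transcribe the resulting inequality.

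First, with $a_k = 1$ for every $k \in [n]$, the required conditions $a_k^{-1} \in \mathbb{Z}$ and $|a_k| \geq K^{-1}$ hold trivially with $K = 1$. The sum $S = \sum_{k=1}^n a_k x_k \xi_k$ then reduces exactly to the sum $\sum_{k=1}^n x_k \xi_k$ appearing in the corollary, and the coefficient vector $x = (x_1,\ldots,x_n)$ remains on $S^{n-1}$, so the LCD $D_L(x)$ is well defined. Next, the condition $L \geq p_0^{-1/2} K$ in Theorem~\ref{thm:small} becomes $L \geq p_0^{-1/2}$, which matches the hypothesis of the corollary verbatim.

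Finally, the constant $C$ supplied by Theorem~\ref{thm:small} depends on $\eps_0, p_0, K_0$ and $K$, but since we have fixed $K = 1$, this dependence collapses to dependence on $\eps_0, p_0, K_0$ alone, as required. Applying Theorem~\ref{thm:small} therefore yields
\[
 \Le(S, t) \leq C L \left( t + \frac{1}{D_L(x)} \right)
\]
for every $t \geq 0$, which is the conclusion of the corollary.

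There is no substantive obstacle here: all the work has been done in Theorem~\ref{thm:small}, which is designed precisely to handle the more general weighted sum with rational coefficients $a_k$. The only point worth emphasizing is that the right-hand side of the bound in Theorem~\ref{thm:small} is independent of the values of the $a_k$'s, so no hidden dependence on $K$ creeps into the estimate after we set $K = 1$.
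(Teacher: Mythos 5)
Your proof is correct and is exactly the approach the paper takes: the paper explicitly remarks, immediately before stating Corollary~\ref{cor:small}, that it is the specialization of Theorem~\ref{thm:small} to the case $a_k=1$ for all $k$. Your verification that $a_k=1$, $K=1$ satisfies the hypotheses $a_k^{-1}\in\mathbb{Z}$, $|a_k|\geq K^{-1}$, and $L\geq p_0^{-1/2}K$ is complete and leaves nothing to add.
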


The proof of Theorem \ref{thm:small} is based on Esseen's Lemma; see, for instance, \cite[Lemma 1.16]{Plt} or \cite[Section 7.3]{TVac}. 

\begin{lemma}[Esseen; Lemma 1.16 from \cite{Plt}] \label{lemma:esseen}
Let $Y$ be a real random variable.  Then
$$ \Le(Y,1) \leq C \int_{-1}^1 |\phi_Y(\theta)| d \theta, $$
where $\phi_Y(\theta) := \E[e^{2 \pi i \theta Y}]$ is the characteristic function of $Y$, and $C > 0$ is an absolute constant.  
\end{lemma}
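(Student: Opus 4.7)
The plan is to prove Esseen's inequality by a standard Fourier-smoothing argument: I would bound the indicator $\mathbf{1}_{[-1,1]}$ by a non-negative ``majorant'' function $h$ whose Fourier transform has compact support, and then use Fubini to express the expectation of the majorant as an integral against the characteristic function of $Y$.

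First, I would construct an explicit function $h \colon \mathbb{R} \to \mathbb{R}$ with the following three properties: (i) $h$ is non-negative on $\mathbb{R}$ and bounded below by some constant $c_0 > 0$ on $[-1,1]$; (ii) the Fourier transform $\hat{h}(\theta) := \int h(y) e^{-2\pi i \theta y}\, dy$ is supported in $[-1,1]$; and (iii) $\|\hat{h}\|_\infty < \infty$. A concrete choice is the sinc-squared function $h(y) = \left(\frac{\sin(\pi y/2)}{\pi y/2}\right)^2$, whose Fourier transform is a (suitably normalized) triangle function supported on $[-1/2,1/2]\subset[-1,1]$. Since sinc-squared is decreasing on $[0,2]$, one has $h(y) \geq h(1) = 4/\pi^2$ on $|y|\leq 1$, and the $L^\infty$ bound on $\hat{h}$ is immediate from its explicit triangular form.

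Next, for any $u \in \mathbb{R}$, property (i) gives the pointwise bound $\mathbf{1}_{\{|y-u|\leq 1\}} \leq c_0^{-1} h(y-u)$, so
\begin{equation*}
    \Prob(|Y - u| \leq 1) \leq c_0^{-1}\, \E[h(Y - u)].
\end{equation*}
Since $\hat{h}$ is integrable (being bounded and compactly supported), Fourier inversion yields $h(y) = \int \hat{h}(\theta) e^{2\pi i \theta y}\, d\theta$, and applying Fubini (justified by the compact support of $\hat{h}$ and the bound $|\phi_Y|\leq 1$) gives
\begin{equation*}
    \E[h(Y - u)] = \int \hat{h}(\theta)\, \phi_Y(\theta)\, e^{-2\pi i \theta u}\, d\theta.
\end{equation*}
Taking absolute values inside the integral, using $\|\hat{h}\|_\infty < \infty$, and invoking the support property (ii) to restrict the integral to $[-1,1]$ yields
\begin{equation*}
    \E[h(Y - u)] \leq \|\hat{h}\|_\infty \int_{-1}^{1} |\phi_Y(\theta)|\, d\theta.
\end{equation*}
Finally, taking the supremum over $u \in \mathbb{R}$ on the left-hand side gives $\Le(Y, 1) \leq C \int_{-1}^{1} |\phi_Y(\theta)|\, d\theta$ with absolute constant $C = \|\hat{h}\|_\infty / c_0$.

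The main obstacle is not analytic but rather the construction of $h$: I need $h$ to be simultaneously non-negative, bounded below on $[-1,1]$, and to have Fourier transform supported in $[-1,1]$ (rather than in some larger interval, which would weaken the final bound). The Paley--Wiener principle forbids both $h$ and $\hat{h}$ from being compactly supported, so the best we can do is replace the indicator by a Schwartz-type majorant; writing $h = |g|^2$ for $g = \widehat{\mathbf{1}_{[-1/4,1/4]}}$ (equivalently, the sinc construction above) achieves all three properties at once and leads to an explicit absolute constant $C$.
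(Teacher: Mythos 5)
Your proposal is correct: the majorant $h(y)=\bigl(\tfrac{\sin(\pi y/2)}{\pi y/2}\bigr)^2$ is non-negative, is at least $4/\pi^2$ on $[-1,1]$, and has Fourier transform equal to a triangle function supported in $[-1/2,1/2]$ with $\|\hat h\|_\infty=2$, so the Fourier inversion plus Fubini step goes through and yields the bound with the explicit absolute constant $C=\pi^2/2$. Note that the paper itself does not prove this lemma at all --- it is quoted directly from Petrov \cite{Plt} (see also \cite[Section 7.3]{TVac}) --- and your Fej\'er-kernel smoothing argument is essentially the classical proof found in those references, so there is nothing to reconcile; the only cosmetic slip is the aside about Paley--Wiener, which is irrelevant since $h$ itself is never required to have compact support.
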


\begin{proof}[Proof of Theorem \ref{thm:small}]
The proof is based on the arguments given in \cite{Vsym}.  By scaling $\xi$ by $\eps_0$, we can without loss of generality assume that $\eps_0 = 1$.  Applying Esseen's Lemma (Lemma \ref{lemma:esseen}) to $S/t$, we obtain
\begin{equation} \label{eq:L1prod}
	\Le(S, t) \leq C \int_{-1}^1 \prod_{k=1}^n \left| \phi \left( \frac{\theta x_k a_k}{t} \right) \right| d \theta, 
\end{equation}
by independence of $\xi_1, \ldots, \xi_n$, where 
$$ \phi(s) := \E \left[ e^{2 \pi i s \xi} \right] $$
is the characteristic function of $\xi$.  

Let $\xi'$ be independent copy of $\xi$, and let $\bar{\xi} := \xi - \xi'$.  Clearly, $\bar{\xi}$ is a symmetry random variable.  By symmetry, 
$$ |\phi(s)|^2 = \E \cos(2 \pi s \bar{\xi}). $$
Using the bound $|x| \leq \exp(-1/2(1 - x^2))$, which is valid for all $x \in \mathbb{R}$, we obtain
\begin{equation} \label{eq:L1bnd}
	|\phi(s)| \leq \exp \left( - \frac{1}{2} \left(1 - \E \cos(2 \pi s \bar{\xi}) \right) \right). 
\end{equation}

The assumptions on $\xi$ imply that the event $\mathcal{E} := \{1 \leq |\bar{\xi}| \leq 2 K_0 \}$ holds with probability at least $p_0 / 2$.  So
\begin{align*}
	1 - \E \cos(2 \pi s \bar{\xi}) &\geq \Prob(\mathcal{E}) \E\left[ 1 - \cos(2 \pi s \bar{\xi}) \relmiddle| \mathcal{E} \right] \\
		&\geq \frac{p_0}{2}  \E \left[ \frac{4}{\pi^2} \min_{q \in \mathbb{Z}} | 2 \pi s \bar{\xi} - 2 \pi q|^2 \relmiddle| \mathcal{E} \right] \\
		&\geq 8 p_0 \E \left[ \min_{q \in \mathbb{Z}} |s \bar{\xi} - q|^2 \relmiddle| \mathcal{E} \right]. 
\end{align*}
Thus,
\begin{align*}
	1 - \E \cos(2 \pi s a_k \bar{\xi}) &\geq 8 p_0 \E \left[ \min_{q \in \mathbb{Z}} |s a_k \bar{\xi} - q|^2 \relmiddle| \mathcal{E} \right] \\
		&\geq 8 p_0 |a_k|^2 \E \left[ \min_{q \in \mathbb{Z}} |s \bar{\xi} - \frac{q}{a_k} |^2 \relmiddle| \mathcal{E} \right] \\
		& \geq 8 \frac{p_0}{K^2} \E \left[ \min_{q \in \mathbb{Z}} |s \bar{\xi} - q|^2 \relmiddle| \mathcal{E} \right]
\end{align*}
since $a_k^{-1} \in \mathbb{Z}$.  Thus, subsituting the bound above into \eqref{eq:L1bnd} yields
$$ |\phi(a_k s)| \leq \exp \left( - 4 p_0 K^{-2} \E \left[ \min_{q \in \mathbb{Z}} |s \bar{\xi} - q|^2 | \mathcal{E} \right] \right). $$

Therefore, returning to \eqref{eq:L1prod}, we conclude that
\begin{align*}
	\Le(S, t) &\leq C \int_{-1}^1 \exp \left( - 4 p_0 K^{-2} \sum_{k=1}^n \E \left[ \min_{q_k \in \mathbb{Z}} \left| \frac{ \theta x_k \bar{\xi}}{t} - q_k \right|^2 \relmiddle| \mathcal{E} \right] \right) d \theta \\
	&\leq C \E \left[ \int_{-1}^1 \exp \left( - 4 p_0 K^{-2} \dist \left( \frac{\bar{\xi}\theta}{t}x, \mathbb{Z}^n \right)^2 \right) d \theta \relmiddle| \mathcal{E} \right] 
\end{align*}
by Jensen's inequality.  Thus, by definition of the event $\mathcal{E}$, 
$$ \Le(S, t) \leq 2 C \sup_{1 \leq z \leq 2 K_0} \int_{0}^1 \exp \left( - 4 p_0 K^{-2} f_z^2(\theta) \right) d \theta, $$
where
$$ f_z(\theta) := \dist \left( \frac{z \theta}{t} x , \mathbb{Z}^n \right). $$

Suppose that $t > t_0 := \frac{2 K_0}{D_L(x)}$.  Then, for every, $1 \leq z \leq 2 K_0$ and every $\theta \in [0,1]$, we have $\frac{z \theta }{t} < D_L(x)$.  By definition, this means that
$$ f_z(\theta) \geq L \sqrt{ \log_+ \left( \frac{z \theta}{t L } \right) }. $$
Thus, 
$$ \Le(S, t) \leq 2 C \sup_{z \geq 1} \int_{0}^1 \exp \left( - 4 p_0 K^{-2} L^2 \log_{+} \left(  \frac{z \theta}{t L} \right)  \right) d \theta. $$
By the change of variables $u = \frac{z \theta}{t L}$, we find that
\begin{align*}
	\Le(S, t) &\leq 2 C t L \int_0^\infty \exp \left( - 4 p_0 K^{-2} L^2 \log_+ (u) \right) du \\
	&\leq 2 C t L \left( 1 + \int_{1}^\infty u^{-4 p_0 K^{-2} L^2} du \right).
\end{align*}
Since $p_0 K^2 L^2 \geq 1$ by assumption, the integral on the right-hand side is bounded above by an absolute constant.  Thus, 
$$ \Le(S, t) \leq C_1 t L $$
for an absolute constant $C_1 > 0$.  

Finally, suppose $t \leq t_0$.  Then, from the previous estimates applied to $2 t_0$, we obtain
$$ \Le(S, t) \leq \Le(S, 2 t_0) \leq 2 C_1 L t_0 = \frac{4 C_1 K_0 L}{D_L(x)}. $$
The proof of the theorem is complete.  
\end{proof}

Theorem \ref{thm:small} and Corollary \ref{cor:small} are most useful in the case when the coefficient vector $x$ is sufficiently unstructured i.e. when $D_L(x)$ is large.  In the situation where no information is known about the structure of $x$, the following bound can be useful.  

\begin{lemma}[Small ball probabilities: simple bound] \label{lemma:smallsimple}
Let $\xi_1, \ldots, \xi_n$ be iid copies of a real random variable $\xi$ which satisfies \eqref{eq:rvassump} for some $\eps_0, p_0, K_0 > 0$.  Assume $x = (x_1, \ldots, x_n) \in S^{n-1}$.  Then 
$$ \Le \left( \sum_{k=1}^n x_k \xi_k, c \right) \leq 1 - c', $$
where $c$ and $c'$ are positive constants that may depend only on $\eps_0, p_0$, and $K_0$.  
\end{lemma}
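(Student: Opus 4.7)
The plan is to dichotomize on the size of $\|x\|_\infty$ and combine a one-dimensional concentration estimate with Corollary \ref{cor:small}.

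First I would record the one-variable bound $\Le(\xi, \eps_0/2) \leq 1 - p_0/2$. This is immediate from \eqref{eq:rvassump}: if $\Prob(|\xi - u| \leq \eps_0/2) = q$ for some $u \in \mathbb{R}$, then independence of $\xi, \xi'$ gives $\Prob(|\xi - \xi'| \leq \eps_0) \geq q^2$, and therefore $q \leq \sqrt{1-p_0} \leq 1 - p_0/2$. Next I would fix $L := p_0^{-1/2}$, let $C_0$ denote the constant from Corollary \ref{cor:small}, and choose a threshold $\eta := p_0^{1/2}/(8 C_0)$ together with $c := \min\bigl(p_0^{1/2}/(4C_0),\ \eta \eps_0/2\bigr)$. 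Both $\eta$ and $c$ visibly depend only on $\eps_0, p_0, K_0$.

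In the peaked regime $\|x\|_\infty \geq \eta$, pick an index $k$ with $|x_k| \geq \eta$. By the restriction lemma (Lemma \ref{lemma:restriction}) and monotonicity of $\Le$,
$$ \Le(S, c) \leq \Le(x_k \xi_k, c) = \Le(\xi_k, c/|x_k|) \leq \Le(\xi_k, \eps_0/2) \leq 1 - p_0/2, $$
using $c \leq \eta \eps_0/2 \leq |x_k|\eps_0/2$ in the second-to-last step. In the spread regime $\|x\|_\infty < \eta$, Lemma \ref{lemma:LCDinfinity} gives the LCD lower bound $D_L(x) \geq 1/(2\|x\|_\infty) > 1/(2\eta)$, and Corollary \ref{cor:small} then yields
$$ \Le(S, c) \leq C_0 L \bigl(c + 1/D_L(x)\bigr) \leq C_0 p_0^{-1/2}\bigl(c + 2\eta\bigr) \leq 1/2. $$
Combining the two cases produces $\Le(S, c) \leq 1 - c'$ with $c' := \min(p_0/2, 1/2)$.

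There is no genuine obstacle, since all of the heavy analytic lifting has already been carried out in Theorem \ref{thm:small} and its corollary. The one subtlety to watch is ordering the constants so that the spread regime actually admits the LCD bound $D_L(x) \gtrsim 1/\eta$ via Lemma \ref{lemma:LCDinfinity}, while the choice $c \leq \eta \eps_0/2$ simultaneously keeps the peaked regime effective; the parameters above are selected precisely so that these two constraints are compatible.
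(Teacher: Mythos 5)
Your argument is correct and follows essentially the same route as the paper: the same dichotomy on $\|x\|_\infty$, with the peaked case handled by restriction to the single large coordinate plus the one-variable anti-concentration bound (which the paper isolates as Proposition \ref{prop:rvassumplevy}, derived the same way you derive it inline), and the spread case handled by Lemma \ref{lemma:LCDinfinity} feeding into Corollary \ref{cor:small}. The only differences are cosmetic choices of constants, and your bookkeeping ($L = p_0^{-1/2}$, $\eta = p_0^{1/2}/(8C_0)$, $c \leq \min(p_0^{1/2}/(4C_0), \eta\eps_0/2)$) checks out, yielding the two cases $1 - p_0/2$ and $1/2$ and hence $c' = p_0/2$.
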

\begin{proof}
The proof presented here is based on the arguments given in \cite[Section 7.3]{RVnogaps}.  We consider two separate cases: when $x$ has a large coordinate and when $x$ does not.  Recall that $\|x\|_{\infty}$ is the $\ell^\infty$-norm of $x$.  Assume first that
$$ \|x\|_{\infty} \geq \frac{1}{4CL (2 + \eps_0)} =: \eta, $$
where $C$ is the constant from Corollary \ref{cor:small} and $L \geq p_0^{-1/2}$.  Choose a coordinate $k_0$ such that $|x_{k_0}| = \|x\|_{\infty}$.  Then, by Lemma \ref{lemma:restriction}, we have
$$ \Le \left( \sum_{k=1}^n x_k \xi_k, \frac{\eps_0 \eta}{2} \right) \leq \Le\left(x_{k_0} \xi_{k_0}, \frac{\eps_0 \eta}{2}\right) \leq \Le\left(\xi_{k_0}, \frac{\eps_0}{2}\right) \leq 1 - \left( 1 - \sqrt{1 - p_0/2} \right), $$
where the last inequality follows from Proposition \ref{prop:rvassumplevy} below.  

If $\|x\|_{\infty} < \eta$, then Lemma \ref{lemma:LCDinfinity} implies that
$$ D_L(x) \geq \frac{1}{2 \|x \|_{\infty}} > \frac{1}{2 \eta}. $$
So, by Corollary \ref{cor:small}, we conclude that
$$ \Le \left( \sum_{k=1}^n x_k \xi_k, \frac{\eps_0 \eta}{2} \right) \leq CL \left( \frac{\eps_0 \eta}{2} + 2 \eta \right) \leq CL \eta (\eps_0 + 2) \leq \frac{1}{4}. $$
Here the last inequality follows from the definition of $\eta$.
\end{proof}

\begin{proposition} \label{prop:rvassumplevy}
Let $\xi$ be a real random variable which satisfies \eqref{eq:rvassump} for some $\eps_0, p_0, K_0 > 0$.  Then
$$ \Le \left( \xi, \eps_0/2 \right) \leq \sqrt{1 - p_0/2}. $$
\end{proposition}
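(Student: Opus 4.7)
The plan is to use the classical symmetrization idea together with the first bound in the hypothesis \eqref{eq:rvassump}. By definition, $\Le(\xi, \eps_0/2) = \sup_{u \in \R} \Prob(|\xi - u| \leq \eps_0/2)$, so it suffices to bound $\Prob(|\xi - u| \leq \eps_0/2)$ uniformly in $u$.

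Fix $u \in \R$ and let $\xi'$ be an independent copy of $\xi$. The triangle inequality implies that the event $\{|\xi - u| \leq \eps_0/2\} \cap \{|\xi' - u| \leq \eps_0/2\}$ is contained in the event $\{|\xi - \xi'| \leq \eps_0\}$. Since $\xi$ and $\xi'$ are independent and identically distributed, the probability of the intersection factors as $\Prob(|\xi - u| \leq \eps_0/2)^2$. Combining this with the anti-concentration hypothesis from \eqref{eq:rvassump} gives
\[
\Prob(|\xi - u| \leq \eps_0/2)^2 \leq \Prob(|\xi - \xi'| \leq \eps_0) \leq 1 - p_0 \leq 1 - p_0/2.
\]
Taking square roots and then the supremum over $u \in \R$ yields the claim.

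There is no real obstacle here; the argument is just the standard observation that the L\'evy concentration function at scale $\eps_0/2$ is controlled by the two-point concentration at scale $\eps_0$ via the symmetrized variable $\xi - \xi'$. In fact this reasoning gives the slightly stronger bound $\sqrt{1 - p_0}$; the factor of $1/2$ in $\sqrt{1 - p_0/2}$ in the statement is extra slack, which is convenient when one wants to cite Proposition \ref{prop:rvassumplevy} in contexts (such as the proof of Lemma \ref{lemma:smallsimple}) where matching constants with other estimates is easier when the bound has a gap above zero even if $p_0$ is close to $1$.
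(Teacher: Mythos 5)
Your argument is correct and is essentially the same as the paper's: both rest on the observation that $\{|\xi-u|\leq\eps_0/2\}\cap\{|\xi'-u|\leq\eps_0/2\}\subseteq\{|\xi-\xi'|\leq\eps_0\}$ together with independence, which gives $\Prob(|\xi-u|\leq\eps_0/2)^2\leq 1-p_0$; the paper phrases it as a proof by contradiction while you run the chain of inequalities directly, but the content is identical. Your remark that the argument actually yields $\sqrt{1-p_0}$ is also consistent with the paper, whose own proof concludes with $\sqrt{1-p_0}$ despite the proposition being stated with the weaker $\sqrt{1-p_0/2}$.
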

\begin{proof}
In order to reach a contradiction, assume there exists $u \in \mathbb{R}$ such that
$$ \Prob \left( |\xi - u| \leq \eps_0/2 \right) > \sqrt{1 - p_0/2}. $$
Let $\xi'$ denote an independent copy of $\xi$.  Then
\begin{align*}
	1 - p_0 \geq \Prob( |\xi - \xi'| \leq \eps_0) \geq \Prob(|\xi - u| \leq \eps_0/2, |\xi' - u| \leq \eps_0/2) > 1 - p_0/2
\end{align*}
by independence.  Since this is a contradiction, we conclude that 
\[ \sup_{u \in \mathbb{R}} \Prob(|\xi - u| \leq \eps_0/2) \leq \sqrt{1 - p_0}, \]  
as required.
\end{proof}

\section{Eigenvectors are incompressible}

Before we can show that the eigenvectors are arithmetically unstructured, we first prove a much simpler result.  In particular, we now show that the eigenvectors of a Wigner matrix are \emph{incompressible}.  An incompressible vector is one whose mass is not concentrated on a small number of coordinates.  The partition of the unit sphere into compressible and incompressible vectors has been useful in studying the invertibility of random matrices; see, for example, \cite{RVsing, RVlo, Vsym}.   

\begin{definition}[Compressible and incompressible vectors] \label{def:comp}
Let $c_0, c_1 \in (0,1)$ be two given parameters.  A vector $x \in \mathbb{R}^n$ is called \emph{sparse} if its support satisfies $|\supp(x)| \leq c_0 n$.  A vector $x \in S^{n-1}$ is called \emph{compressible} if $x$ is within Euclidean distance $c_1$ from the set of all sparse vectors.  A vector $x \in S^{n-1}$ is called \emph{incompressible} if it is not compressible.  The sets of compressible and incompressible vectors in $S^{n-1}$ will be denote by $\comp(c_0, c_1)$ and $\incomp(c_0,c_1)$ respectively. 
\end{definition}

The definition above depends on the choice of the constants $c_0, c_1$.  These constants will be chosen in Lemma \ref{lemma:complower} below.  

The sets of compressible and incompressible vectors each have their own advantages.  The set of compressible vectors has small covering numbers, which are exponential in $c_0n$ rather than $n$; see Lemma \ref{lemma:net} below for details.  On the other hand, the class of incompressible vectors has a different advantage.  Each incompressible vector $x$ has a set of coordinates of size proportional to $n$, whose magnitudes are all of order $n^{-1/2}$:

\begin{lemma}[Incompressible vectors are spread; Lemma 3.4 from \cite{RVlo}] \label{lemma:incompressible}
For every $x \in \incomp(c_0, c_1)$, one has
\begin{equation} \label{eq:incompressible}
	\frac{c_1}{\sqrt{2n}} \leq |x_k| \leq \frac{1}{\sqrt{c_0 n}} 
\end{equation}
for at least $\frac{1}{2} c_0 c_1^2 n$ coordinates $x_k$ of $x$.  
\end{lemma}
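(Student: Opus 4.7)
The plan is to partition the coordinates of $x$ into three disjoint groups according to magnitude—large, intermediate, and small—and then use the incompressibility hypothesis together with the $\ell^2$ constraint $\|x\| = 1$ to force the intermediate group to have the claimed size. Specifically, I would set
\[
 L := \{k : |x_k| > 1/\sqrt{c_0 n}\}, \quad I := \{k : c_1/\sqrt{2n} \leq |x_k| \leq 1/\sqrt{c_0 n}\}, \quad S := \{k : |x_k| < c_1/\sqrt{2n}\},
\]
so that $L, I, S$ partition $[n]$ and $I$ is precisely the set whose cardinality we want to bound.

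First, a quick norm calculation controls the large coordinates: $1 = \|x\|^2 \geq \sum_{k \in L} x_k^2 > |L|/(c_0 n)$ forces $|L| < c_0 n$. Hence the vector $x_L$ obtained by restricting $x$ to the coordinates in $L$ (and zeroing out the rest) has support strictly less than $c_0 n$, i.e.\ it is sparse in the sense of Definition \ref{def:comp}, and therefore serves as a legitimate witness against which to test incompressibility. In the other direction, a crude trivial bound on the small coordinates gives $\|x_S\|^2 \leq |S| \cdot c_1^2/(2n) \leq c_1^2/2$, since there are at most $n$ indices in $S$ and each contributes at most $c_1^2/(2n)$ to the squared norm.

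Next I would apply the incompressibility assumption: since $x \in \incomp(c_0, c_1)$, the sparse vector $x_L$ satisfies $\|x - x_L\| \geq c_1$, but the difference $x - x_L$ is supported on $I \cup S$, so
\[
 c_1^2 \leq \|x - x_L\|^2 = \|x_I\|^2 + \|x_S\|^2 \leq \|x_I\|^2 + \frac{c_1^2}{2},
\]
and therefore $\|x_I\|^2 \geq c_1^2/2$. Since every coordinate of $x$ indexed by $I$ has magnitude at most $1/\sqrt{c_0 n}$, we also have $\|x_I\|^2 \leq |I|/(c_0 n)$, and combining the two inequalities yields $|I| \geq \frac{1}{2} c_0 c_1^2 n$, which is precisely the statement of the lemma. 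There is no real obstacle in this argument; the only point requiring mild care is confirming that the upper threshold $1/\sqrt{c_0 n}$ is chosen compatibly with the sparsity threshold $c_0 n$ so that $x_L$ qualifies as sparse—which is exactly the content of the norm calculation in the first step.
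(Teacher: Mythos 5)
Your argument is correct, and it is essentially the standard proof of this fact (the paper itself gives no proof here, simply citing Lemma 3.4 of Rudelson--Vershynin \cite{RVlo}, where exactly this three-way partition by magnitude is used). The one place that deserves an extra word is the inference $1 \geq \sum_{k \in L} x_k^2 > |L|/(c_0 n) \Rightarrow |L| < c_0 n$: when $L = \emptyset$ the strict inequality in the middle becomes $0 > 0$ and fails, so the cleaner phrasing is that $\sum_{k \in L} x_k^2 \geq |L|/(c_0 n)$ always, giving $|L| \leq c_0 n$ in all cases; this is exactly what is needed for $x_L$ to be sparse in the sense of Definition \ref{def:comp}. (One could also just observe $|L| < c_0 n$ holds trivially when $L$ is empty.) With that small point patched, every step — the sparsity of $x_L$, the trivial bound $\|x_S\|^2 \leq c_1^2/2$, the use of incompressibility to lower-bound $\|x - x_L\|$, the Pythagorean split $\|x - x_L\|^2 = \|x_I\|^2 + \|x_S\|^2$, and the final comparison $c_1^2/2 \leq \|x_I\|^2 \leq |I|/(c_0 n)$ — is valid, and together they yield $|I| \geq \tfrac{1}{2} c_0 c_1^2 n$ as claimed.
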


We shall now show that a Wigner matrix $W$ has no compressible unit eigenvectors.  As before we work on the event $\mathcal{B}_{W, M}$ defined in \eqref{eq:def:BWM}.  In particular, on this event, all the eigenvalues of $W$ are at most $M \sqrt{n}$ is magnitude.  Thus, we will show that\footnote{For convenience of notation we skip the identity symbol and write $W - \lambda$ for $W - \lambda I$}, with high probability, $\|(W - \lambda)x \| > 0$ for all compressible vectors $x$ and all choices of $\lambda \in \mathbb{R}$ with $|\lambda| \leq M \sqrt{n}$.  From this we can easily deduce that, conditionally on $\mathcal{B}_{W,M}$, the unit eigenvectors of $W$ are incompressible.  

\begin{lemma}[Lower bound on the set of compressible vectors] \label{lemma:complower}
Let $\xi$ and $\zeta$ be real random variables.  Assume $\xi$ satisfies \eqref{eq:rvassump} for some $\eps_0, p_0, K_0 > 0$.  Let $W$ be an $n \times n$ Wigner matrix with atom variables $\xi$ and $\zeta$.  Let $M \geq 1$, and recall the event $\mathcal{B}_{W, M}$ defined in \eqref{eq:def:BWM}.  Then there exist constants $c_0, c_1 \in (0,1)$ from Definition \ref{def:comp} and constants $C, c > 0$ such that
\begin{equation} \label{eq:complower}
	\Prob \left( \inf_{x \in \comp(c_0,c_1)} \inf_{|\lambda| \leq M \sqrt{n}} \| (W - \lambda) x \| \leq c \sqrt{n} \text{ and } \mathcal{B}_{W,M} \right) \leq C \exp(-c n). 
\end{equation}
Here the constants $C, c, c_0, c_1$ depend only on $\eps_0, p_0, K_0$, and $M$.  
\end{lemma}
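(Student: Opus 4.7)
The strategy is the now-standard Rudelson--Vershynin covering argument: first establish a pointwise anti-concentration bound for $\|(W-\lambda)x\|$ at a single unit vector $x$ and a single $\lambda$, and then pass to all of $\comp(c_0,c_1) \times [-M\sqrt n, M\sqrt n]$ by a union bound over a suitable net. Two mildly non-standard features to handle are that $W$ is symmetric (so its rows are not independent) and that $\lambda$ varies over an interval of length $O(\sqrt n)$ rather than a fixed set.

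For the pointwise estimate, fix $x \in S^{n-1}$ and $|\lambda| \leq M\sqrt n$. Partition $[n] = J_1 \sqcup J_2$ into two halves of equal size; one of $\|x_{J_1}\|, \|x_{J_2}\|$ is at least $1/\sqrt 2$, and after relabelling I may assume it is $\|x_{J_2}\|$. For $i \in J_1$,
$$ ((W-\lambda)x)_i = \sum_{j \in J_2} w_{ij}x_j + \mu_i, $$
where $\mu_i$ depends only on $\lambda$, $x$, and the principal sub-block $\{w_{kl}: k,l \in J_1\}$. Conditioning on that sub-block, the rectangular array $\{w_{ij} : i \in J_1, j \in J_2\}$ consists of independent copies of $\xi$, so the coordinates $\{((W-\lambda)x)_i\}_{i \in J_1}$ become conditionally independent. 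Lemma \ref{lemma:smallsimple} applied to $x_{J_2}/\|x_{J_2}\|$ and rescaled by $\|x_{J_2}\| \geq 1/\sqrt 2$ yields constants $\kappa, p_\ast > 0$ (depending only on $\eps_0,p_0,K_0$) with $\Le(((W-\lambda)x)_i, \kappa) \leq 1 - p_\ast$ for every $i \in J_1$. A Hoeffding bound applied to the conditionally independent events $\{|((W-\lambda)x)_i| \leq \kappa\}$ shows that at least $p_\ast n/4$ of them fail except on a set of conditional probability $\leq e^{-c''n}$; on the good event, $\|(W-\lambda)x\|^2 \geq \kappa^2 p_\ast n/4$, i.e., $\|(W-\lambda)x\| \geq \kappa'\sqrt n$ for $\kappa' := \kappa\sqrt{p_\ast}/2$. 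Integrating over the conditioning yields the same unconditional bound.

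For the covering step, $\comp(c_0,c_1)$ admits a $\rho$-net $\mathcal N_\rho$ of cardinality at most $(C/(c_0(\rho - c_1)))^{c_0 n}$ (combining $\binom{n}{c_0 n}$ choices of support with a $(3/(\rho-c_1))$-net of the unit sphere in each coordinate subspace), and $[-M\sqrt n, M\sqrt n]$ has an $\eta$-net $\mathcal L_\eta$ of size $O(M\sqrt n/\eta)$. On $\mathcal B_{W,M}$ one has $\|W - \lambda_0\| \leq 2M\sqrt n$, so whenever $\|x - x_0\| \leq \rho$ and $|\lambda - \lambda_0| \leq \eta$,
$$ \|(W-\lambda)x\| \geq \|(W-\lambda_0)x_0\| - 2M\sqrt n \cdot \rho - \eta. $$
Choosing $\rho = \kappa'/(8M)$, $\eta = \kappa'\sqrt n/4$, and $c_1 \leq \rho/2$ makes the discretization loss at most $\kappa'\sqrt n/2$. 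A union bound over $\mathcal N_\rho \times \mathcal L_\eta$ against the pointwise bound yields failure probability $\leq (C'/c_0)^{c_0 n}\cdot O(M/\kappa') \cdot e^{-c''n}$. Taking $c_0$ small enough that $c_0 \log(C'/c_0) < c''/2$ makes this bound $\leq C_0 e^{-c_0'' n}$; on the complementary event, $\|(W-\lambda)x\| \geq \kappa'\sqrt n/2$ for every $x \in \comp(c_0,c_1)$ and every $|\lambda|\leq M\sqrt n$, as required.

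The main obstacle is the pointwise bound. Because $W$ is symmetric, a naive decomposition into $n$ independent rows is unavailable, and an exponentially small bound on $\|(W-\lambda)x\|$ requires exploiting the independence of an explicit off-diagonal block; the $J_1/J_2$ split is the device that recovers this. By contrast, the net and discretization steps are routine, and the hypothesis $\P(\mathcal B_{W,M}) \geq 1/2$ enters nowhere in the pointwise argument --- it is used only to convert the operator-norm control $\|W\| \leq M\sqrt n$ on $\mathcal B_{W,M}$ into the Lipschitz estimate needed for discretization.
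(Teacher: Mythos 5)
Your proof is correct and follows essentially the same architecture as the paper's: a pointwise lower bound $\Prob(\|(W-\lambda)x\| \leq c\sqrt n) \leq C e^{-cn}$ obtained by extracting an independent rectangular off-diagonal block of the symmetric matrix, followed by a union bound over a low-cardinality net of $\comp(c_0,c_1) \times [-M\sqrt n, M\sqrt n]$, with $c_0$ chosen small enough for the net's entropy cost to be dominated by the pointwise exponential. The only variations are cosmetic: the paper picks $J$ to be the top $\lceil n/4\rceil$ coordinates (guaranteeing $\|x_J\|^2 \geq 1/4$ deterministically) and then bounds the number of small coordinates $V_i$ by an explicit binomial counting estimate, whereas you split $[n]$ into two halves, retain the half carrying at least half the mass, and invoke Hoeffding for the same concentration step; similarly your $\rho$-net bookkeeping for $\comp(c_0,c_1)$ differs only in constants from the paper's Lemma~\ref{lemma:compnet}. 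These are interchangeable implementations of the identical idea, and both correctly isolate the one non-trivial point (that symmetry forces one to condition on a principal sub-block so the complementary rectangular block has independent entries).

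One small remark: the hypothesis $\Prob(\mathcal{B}_{W,M}) \geq 1/2$ that you mention at the end is not actually part of Lemma~\ref{lemma:complower} (it appears only in the later theorems), and indeed, as you observe, nothing in this lemma requires any lower bound on $\Prob(\mathcal{B}_{W,M})$; $\mathcal{B}_{W,M}$ enters only to supply the operator-norm control $\|W - \lambda\| \leq 2M\sqrt n$ used in the discretization.
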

\begin{remark}
Lemma \ref{lemma:complower} implies that, with high probability, the kernel of $W - \lambda$ consists of incompressible vectors:
$$ \ker(W - \lambda) \cap S^{n-1} \subseteq \incomp(c_0,c_1) \quad \text{for all} \quad \lambda \in [-M \sqrt{n}, M \sqrt{n}]. $$
\end{remark}

The rest of this section is devoted to the proof of Lemma \ref{lemma:complower}.  The argument presented here is based on the arguments given in \cite[Section 9.3]{RVnogaps}.  We shall first consider the case of a fixed compressible vector $x$ and real number $\lambda$.  We will then use a union bound argument to obtain Lemma \ref{lemma:complower}.  

To begin, we introduce the following notation.  Given an $n \times n$ matrix $A = (a_{ij})_{i,j=1}^n$ and index sets $I, J \subseteq [n]$, by $A_{I \times J}$ we denote the $|I|\times |J|$ matrix $(a_{ij})_{i \in I, j \in J}$ obtained by including entries whose rows are indexed by $I$ and whose columns are indexed by $J$.  Similarly, for a vector $x \in \mathbb{R}^n$, by $x_J$ we denote the vector in $\mathbb{R}^J$ which consists of the coefficients indexed by $J$.  In this case, the complement $J^c$ of the index set $J$ is given by $[n] \setminus J$.  

We now present a general bound for a fixed vector $x$ and real number $\lambda$.  

\begin{lemma}[Wigner matrix acting on a fixed vector] \label{lemma:fixvector}
Let $\xi$ and $\zeta$ be real random variables.  Assume $\xi$ satisfies \eqref{eq:rvassump} for some $\eps_0, p_0, K_0 > 0$.  Let $W$ be an $n \times n$ Wigner matrix with atom variables $\xi$ and $\zeta$.  Then there exist constants $C, c > 0$ (depending only on $\eps_0, p_0, K_0$) such that the following holds.  Let $\lambda \in \mathbb{R}$, and assume $x \in \mathbb{R}^n$ is a unit vector.  Then
$$ \Prob \left( \|(W - \lambda) x\| \leq c \sqrt{n} \right) \leq C \exp(-c n). $$
\end{lemma}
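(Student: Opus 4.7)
The plan is to extract from the $n$ entries of $(W-\lambda I)x$ a collection of $\lfloor n/2 \rfloor$ coordinates that are conditionally independent, with each having non-trivial small-ball spread, and then conclude by a Chernoff-type argument. The main subtlety is that the symmetry of $W$ creates dependencies between rows, which I will neutralize by a well-chosen conditioning.

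First, I choose $J \subseteq [n]$ with $|J| = \lfloor n/2 \rfloor$ consisting of the indices of the $\lfloor n/2 \rfloor$ smallest values of $|x_k|$, which guarantees $\|x_{J^c}\|^2 \geq 1/2$ since $J^c$ must carry at least half the mass of the unit vector $x$. Let $\mathcal{F}$ be the $\sigma$-algebra generated by all entries of $W$ except those in the rectangular block $W_{J \times J^c}$; in particular $\mathcal{F}$ contains $W_{J \times J}$ and $W_{J^c \times J^c}$. Because the independent coordinates of $W$ sit in the upper triangle, the random variables $\{w_{ij} : i \in J,\ j \in J^c\}$ are independent of $\mathcal{F}$ and are iid copies of $\xi$. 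For each $i \in J$, write the $i$-th entry of $(W-\lambda I)x$ as
\[ Y_i = C_i + \tilde{Y}_i, \qquad C_i := \sum_{j \in J} w_{ij} x_j - \lambda x_i, \qquad \tilde{Y}_i := \sum_{j \in J^c} w_{ij} x_j, \]
so that $C_i$ is $\mathcal{F}$-measurable and the family $\{\tilde{Y}_i\}_{i \in J}$ is conditionally independent given $\mathcal{F}$ (distinct rows $i \in J$ involve disjoint blocks of the fresh randomness).

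The vector $x_{J^c}/\|x_{J^c}\|$ is a unit vector in $\mathbb{R}^{J^c}$, so applying Lemma~\ref{lemma:smallsimple} to $\tilde{Y}_i/\|x_{J^c}\|$ yields constants $c_2, c_3 > 0$ (depending only on $\eps_0, p_0, K_0$) with $\Le(\tilde{Y}_i, c_2 \|x_{J^c}\|) \leq 1 - c_3$. Absorbing the $\mathcal{F}$-measurable shift $C_i$ into the supremum defining $\Le$, and using $\|x_{J^c}\| \geq 1/\sqrt{2}$, gives
\[ \Prob\bigl(|Y_i| \leq c_2/\sqrt{2} \bigm| \mathcal{F}\bigr) \leq 1 - c_3 \qquad \text{for each } i \in J. \]
Note that $\lambda$ only enters through $C_i$ and is therefore irrelevant after this step; the bound is uniform in $\lambda$, which is what will be needed for the union-bound proof of Lemma~\ref{lemma:complower}.

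Finally, set $N := |\{i \in J : |Y_i| > c_2/\sqrt{2}\}|$. Conditionally on $\mathcal{F}$, $N$ is a sum of $|J|$ independent Bernoullis each with mean at least $c_3$, so a standard Chernoff bound produces $\Prob(N < c_3|J|/2 \mid \mathcal{F}) \leq \exp(-c_4 n)$ uniformly in $\mathcal{F}$; taking expectations removes the conditioning. On the complementary event
\[ \|(W - \lambda I)x\|^2 \geq \sum_{i \in J} Y_i^2 \geq N \cdot \frac{c_2^2}{2} \geq \frac{c_2^2 c_3}{4}\cdot\lfloor n/2\rfloor \geq c^2 n, \]
which gives the claimed bound $\Prob(\|(W-\lambda I)x\| \leq c\sqrt{n}) \leq C e^{-cn}$. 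The one delicate point is the symmetry-driven choice of $\mathcal{F}$ that makes the $\tilde{Y}_i$'s conditionally independent; once this is arranged, the rest is the standard Rudelson–Vershynin small-ball plus Chernoff machinery.
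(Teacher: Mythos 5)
Your proof is correct and follows essentially the same strategy as the paper's: pick an index set on which the coordinates of $x$ carry a constant fraction of the mass, condition on the corresponding principal block of $W$, use the fresh randomness of the rectangular off-diagonal block together with Lemma~\ref{lemma:smallsimple} to get a per-coordinate anti-concentration bound, and finish with a concentration argument over the independent coordinates. The only (cosmetic) differences are that you take $J$ to be the smallest $\lfloor n/2\rfloor$ coordinates and read the fresh randomness from rows $J$ against columns $J^c$, whereas the paper takes $J$ to be the largest $\lceil n/4\rceil$ coordinates and uses rows $J^c$ against columns $J$; and you invoke a Chernoff bound where the paper does a direct binomial counting argument.
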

\begin{proof}
By increasing the constant $C$ if necessary, it suffices to assume that $n \geq 16$.  Fix a unit vector $x \in \mathbb{R}^n$.  Let $J \subseteq [n]$ be the set of indices of the $\lceil n/4 \rceil$ largest coordinates.  Since $x$ is a unit vector, it follows that $\|x_J\|^2 \geq 1/4$.  We decompose $W - \lambda$ into the sub-matrices $W_{J \times J} - \lambda$, $W_{J \times J^c}$, $W_{J^c \times J}$, and $W_{J^c \times J^c} - \lambda$.  Thus, 
\begin{align*}
	\| (W- \lambda)x\|^2 \geq \| W_{J^c \times J} x_J + (W_{J^c \times J^c} - \lambda) x_{J^c} \|^2 = \|W_{J^c \times J} x_J + a \|^2,
\end{align*}
where $a := (W_{J^c \times J^c }- \lambda) x_{J^c}$.  Observe that $W_{J^c \times J^c}$ is independent of $W_{J^c \times J}$.  Thus, we condition on $W_{J^c \times J^c}$ and now treat $a = (a_i)_{i \in J^c}$ as a constant vector.  Therefore, we have
\begin{equation} \label{eq:Wlambdafirst}
	\| (W - \lambda) x \|^2 \geq \sum_{i \in [n] \setminus J} V_i^2, 
\end{equation}
where
$$ V_i := \sum_{j \in J} w_{ij} x_j + a_i. $$
By assumption, the entries of $W_{J^c \times J}$ are iid copies of $\xi$.  Applying Lemma \ref{lemma:smallsimple}, there exist constants $c, c' > 0$, which depend only on $\eps_0, p_0$, and $K_0$, such that
$$ \Prob(|V_i| \leq c) \leq 1 - c' $$
for all $i \in [n] \setminus J$.  

Assume that $\|(W - \lambda)x\|^2 \leq \alpha c^2 n$, where $\alpha \in (0,1/8)$ is a constant to be chosen later.  In view of \eqref{eq:Wlambdafirst}, this assumption implies that
$$ \sum_{i \in [n] \setminus J} V_i^2 \leq \alpha c^2 n, $$
which in turn implies that $|V_i| \leq c$ for at least $|[n] \setminus J| - \lceil \alpha n \rceil$ of the random variables $V_i$ in the sum.  As the random variables $\{V_i : i \in [n] \setminus J \}$ are independent, we obtain
\begin{align*}
	\Prob \left( \|(W - \lambda) x \|^2 \leq \alpha c^2 n \right) &\leq \binom{ |[n] \setminus J|}{\lceil \alpha n \rceil} (1 - c')^{|[n] \setminus J| - \lceil \alpha n \rceil} \\
		&\leq \left( \frac{ |[n] \setminus J| e}{\lceil \alpha n \rceil} \right)^{\lceil \alpha n \rceil} (1 - c')^{| [n] \setminus J| - \lceil \alpha n \rceil} \\
		&\leq \left( \frac{3e}{4 \alpha} \right)^{\alpha n + 1} \exp \left( \frac{1}{2} n \log (1 - c') \right) \\
		&\leq \left(\frac{3e}{4 \alpha} \right) \exp \left( \alpha n \log \left( \frac{3e}{4 \alpha} \right) \right) \exp \left( \frac{1}{2} n \log (1 - c') \right).
\end{align*}
Here, we used the bound
$$ |[n] \setminus J| - \lceil \alpha n \rceil \geq n - \frac{n}{4} - 1 - \frac{n}{8} - 1 \geq \frac{5}{8} n - 2 \geq \frac{n}{2}, $$
which holds true for $n \geq 16$.  Choosing $\alpha$ sufficiently small so that
$$ \exp \left( \alpha n \log \left( \frac{3e}{4 \alpha} \right) \right) \leq \exp \left( - \frac{1}{4} n \log (1 - c') \right) $$
completes the proof.  
\end{proof}

Our plan is to apply Lemma \ref{lemma:fixvector} and a union bound to obtain Lemma \ref{lemma:complower}.  However, the set $\comp(c_0, c_1)$ is uncountable, and so we cannot apply the union bound directly.  In order to overcome this issue, we introduce nets as a convenient way to discretize a compact set.  

\begin{definition}[Nets]
Let $X \subseteq \mathbb{R}^d$, and let $\eps > 0$.  A subset $\mathcal{N}$ of $X$ is called an $\eps$-net of $X$ if, for every $x \in X$, there exists $y \in \mathcal{N}$ such that $\|x - y\| \leq \eps$.    
\end{definition}

The following estimate for the maximum size of an $\eps$-net of a sphere is well-known.

\begin{lemma} \label{lemma:net}
A unit sphere in $d$ dimensions admits an $\eps$-net of size at most 
$$ \left(1+\frac{2}{\eps} \right)^d.$$
\end{lemma}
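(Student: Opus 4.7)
The plan is to use the standard volumetric (packing) argument. I would let $\mathcal{N}$ be a maximal $\epsilon$-separated subset of the unit sphere $S^{d-1}$, meaning a set such that $\|x - y\| > \epsilon$ for all distinct $x, y \in \mathcal{N}$, and no superset of $\mathcal{N}$ in $S^{d-1}$ has this property. Such a maximal set exists by Zorn's lemma (or by a greedy construction, since the sphere is compact and the process must terminate).

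The first observation is that $\mathcal{N}$ is automatically an $\epsilon$-net of $S^{d-1}$. Indeed, if some $x \in S^{d-1}$ were at distance more than $\epsilon$ from every point of $\mathcal{N}$, then $\mathcal{N} \cup \{x\}$ would still be $\epsilon$-separated, contradicting maximality. So it suffices to bound $|\mathcal{N}|$.

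For the bound, I would use a volume-packing argument in $\mathbb{R}^d$. Since the points of $\mathcal{N}$ are pairwise at distance strictly greater than $\epsilon$, the open Euclidean balls $B(y, \epsilon/2)$ for $y \in \mathcal{N}$ are pairwise disjoint. Moreover, each such ball is contained in the ball $B(0, 1 + \epsilon/2)$ centered at the origin, because $\|y\| = 1$ and the triangle inequality gives $\|z\| \leq 1 + \epsilon/2$ for $z \in B(y, \epsilon/2)$. Comparing $d$-dimensional Lebesgue volumes yields
\[
|\mathcal{N}| \cdot \left( \frac{\epsilon}{2} \right)^d \mathrm{vol}(B(0,1)) \leq \left( 1 + \frac{\epsilon}{2} \right)^d \mathrm{vol}(B(0,1)),
\]
and dividing both sides by $(\epsilon/2)^d \mathrm{vol}(B(0,1))$ gives
\[
|\mathcal{N}| \leq \left( \frac{1 + \epsilon/2}{\epsilon/2} \right)^d = \left( 1 + \frac{2}{\epsilon} \right)^d,
\]
as required.

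There is essentially no obstacle here; this is a textbook volumetric argument. The only minor subtlety is ensuring strict separation so the half-radius balls are truly disjoint, which is handled by working with the strict inequality $\|x - y\| > \epsilon$ in the definition of maximal $\epsilon$-separated set (and then the closed balls of radius $\epsilon/2$ have disjoint interiors, which is enough for the volume comparison).
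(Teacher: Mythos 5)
Your proof is correct and uses exactly the same maximal $\eps$-separated set plus volume-packing argument as the paper; the only cosmetic difference is that you define separation with the strict inequality $\|x-y\|>\eps$, which makes the disjointness of the half-radius balls immediate, whereas the paper uses $\geq$ and relies on the balls having disjoint interiors. Both variants give the identical bound.
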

\begin{proof}
Let $S$ be the unit sphere in question.  Let $\mathcal{N}$ be a maximal $\eps$-separated subset of $S$.  That is, $\|x-y\| \geq \eps$ for all distinct $x,y \in \mathcal{N}$ and no subset of $S$ containing $\mathcal{N}$ has this property.  Such a set can always be constructed by starting with an arbitrary point in $S$ and at each step selecting a point that is at least $\eps$ distance away from those already selected.  Since $S$ is compact, this procedure will terminate after a finite number of steps.  

We now claim that $\mathcal{N}$ is an $\eps$-net.  Suppose to the contrary.  Then there would exist $x \in S$ that is at least $\eps$ from all points in $\mathcal{N}$.  In other words, $\mathcal{N}\cup\{x\}$ would still be an $\eps$-separated subset of $S$.  This contradicts the maximal assumption above.  

We now proceed by a volume argument.  At each point of $\mathcal{N}$ we place a ball of radius $\eps/2$.  By the triangle inequality, it is easy to verify that all such balls are disjoint and lie in the ball of radius $1+\eps/2$ centered at the origin.  Comparing the volumes give
$$ |\mathcal{N}| \leq \frac{(1+\eps/2)^d}{(\eps/2)^d} = \left( 1 + \frac{2}{\eps} \right)^d. $$
\end{proof}

As alluded to above, the set of compressible vectors has small covering number.  That is, we can find a net for $\comp(c_0,c_1)$ which is exponential in $c_0n$ rather than $n$.  

\begin{lemma}[Net for compressible vectors] \label{lemma:compnet}
There exists a $(3c_1)$-net for the set $\comp(c_0, c_1)$ of cardinality at most
$$ \left( \frac{18}{c_0 c_1} \right)^{c_0 n}, $$
provided $n \geq 2/c_0$.    
\end{lemma}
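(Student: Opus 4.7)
\smallskip

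\noindent\textbf{Proof proposal.} The plan is to exploit the geometric definition of compressibility: a vector $x \in \comp(c_0,c_1)$ lies within Euclidean distance $c_1$ of some sparse vector $y$ (with $|\supp(y)| \leq c_0 n$). So rather than netting the whole sphere, it suffices to net the sparse unit vectors in each coordinate subspace and then translate back by the triangle inequality.

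Set $k := \lfloor c_0 n \rfloor$; since $n \geq 2/c_0$, we have $k \geq c_0 n / 2$. For each subset $J \subseteq [n]$ with $|J|=k$, apply Lemma \ref{lemma:net} to the unit sphere of $\mathbb{R}^J$ (a $k$-dimensional sphere) to obtain a $c_1$-net $\mathcal{N}_J$ with $|\mathcal{N}_J| \leq (1 + 2/c_1)^k \leq (3/c_1)^{c_0 n}$. Let $\mathcal{N} := \bigcup_{|J|=k} \mathcal{N}_J$. Using the standard estimate $\binom{n}{k} \leq (en/k)^k$ together with $en/k \leq 2e/c_0$, I get
\[
|\mathcal{N}| \;\leq\; \binom{n}{k} \left(\frac{3}{c_1}\right)^{c_0 n} \;\leq\; \left(\frac{2e}{c_0}\right)^{c_0 n} \left(\frac{3}{c_1}\right)^{c_0 n} \;=\; \left(\frac{6e}{c_0 c_1}\right)^{c_0 n} \;\leq\; \left(\frac{18}{c_0 c_1}\right)^{c_0 n},
\]
since $6e < 18$.

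It remains to verify that $\mathcal{N}$ is a $(3c_1)$-net for $\comp(c_0,c_1)$. Given $x \in \comp(c_0,c_1)$, choose a sparse $y$ with $\|x-y\| \leq c_1$. Then $\bigl|\|y\|-1\bigr| = \bigl|\|y\|-\|x\|\bigr| \leq c_1 < 1$, so in particular $\|y\| > 0$, and the normalization $\tilde y := y/\|y\|$ is well defined; note $\tilde y$ is a unit vector supported on $\supp(y)$, which fits inside some index set $J$ of size exactly $k$. Pick $z \in \mathcal{N}_J$ with $\|\tilde y - z\| \leq c_1$. Using $\|y - \tilde y\| = \bigl|\|y\| - 1\bigr| \leq c_1$, the triangle inequality gives
\[
\|x - z\| \;\leq\; \|x - y\| + \|y - \tilde y\| + \|\tilde y - z\| \;\leq\; c_1 + c_1 + c_1 \;=\; 3c_1.
\]

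There is no real obstacle here: the one thing to be careful about is extracting the constant $18$ cleanly, but the condition $n \geq 2/c_0$ is exactly what forces $k \geq c_0 n/2$ and lets both the binomial coefficient bound and the net cardinality bound combine into the stated form.
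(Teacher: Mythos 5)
Your proof is correct and follows essentially the same route as the paper's: cover the unit spheres of each $k$-dimensional coordinate subspace with $c_1$-nets (giving $\binom{n}{k}(3/c_1)^k$ points), then pass from a compressible $x$ to a nearby sparse $y$, to its normalization $\tilde y$, to a net point, exactly as in the paper. The constants and the role of $n \geq 2/c_0$ match as well.
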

\begin{proof}
We first construct a $c_1$-net for the set $S$ of sparse unit vectors.  Set $d := \lfloor c_0 n \rfloor$.  By Lemma \ref{lemma:net}, the unit sphere $S^{d-1}$ of $\mathbb{R}^d$ can be covered with at most $(3/c_1)^d$ Euclidean balls of radii $c_1$.  Therefore, the set $S$ can be covered with at most
$$ \binom{n}{d}\left( \frac{3}{c_1} \right)^d \leq \left( \frac{n e}{d} \right)^{c_0 n} \left( \frac{3}{c_1} \right)^{c_0 n} \leq \left( \frac{6 e}{c_1 c_0} \right)^{c_0 n} $$
Euclidean balls of radii $c_1$.  Here the last inequality uses the bound $d \geq \frac{1}{2} c_0 n$, which is a consequence of the assumption $n \geq 2/c_0$.  Let $\mathcal{N}$ be such a $c_1$-net of $S$.  We now claim that $\mathcal{N}$ is a $(3c_1)$-net of $\comp(c_0, c_1)$.  

Indeed, let $x \in \comp(c_0, c_1)$.  By definition, there exists a sparse vector $y \in \mathbb{R}^n$ such that
\begin{equation} \label{eq:sparseapprox}
	\|x - y\| \leq c_1.
\end{equation}
Among other things, \eqref{eq:sparseapprox} implies that
\begin{equation} \label{eq:sparseunit}
	|1 - \|y\|| = |\|x\| - \|y\|| \leq \|x - y \| \leq c_1. 
\end{equation}
Observe that $y / \|y\|$ is a sparse unit vector.  Thus, there exists $z \in \mathcal{N}$ such that 
\begin{equation} \label{eq:sparseapproxunit}
	\left\| z - \frac{y}{\|y\|} \right\| \leq c_1. 
\end{equation}
Hence, by \eqref{eq:sparseapprox}, \eqref{eq:sparseunit}, \eqref{eq:sparseapproxunit}, and the triangle inequality, 
\begin{align*}
	\|x - z\| &\leq \|x - y\| + \left\| y - \frac{y}{\|y\|} \right\| + \left\| \frac{y}{\|y\|} - z \right\| \\
		&\leq c_1 + \left| \|y\| - 1 \right| + c_1 \\
		&\leq 3 c_1.
\end{align*}
We conclude that $\mathcal{N}$ is a $(3c_1)$-net of $\comp(c_0, c_1)$, and the proof is complete.  
\end{proof}

We now complete the proof of Lemma \ref{lemma:complower}.  

\begin{proof}[Proof of Lemma \ref{lemma:complower}]
By Lemma \ref{lemma:fixvector}, there exist constants $C, c > 0$ such that, for any fixed unit vector $x \in \mathbb{R}^n$ and $\lambda \in \mathbb{R}$, 
\begin{equation} \label{eq:fixedvector}
	\Prob \left( \| (W - \lambda)x \| \leq 3 c \sqrt{n} \right) \leq C \exp(-3c n). 
\end{equation}
Set 
\begin{equation} \label{eq:def:c1}
	c_1 := \frac{c}{6 M}, 
\end{equation}
and take $c_0 > 0$ sufficiently small so that
\begin{equation} \label{eq:def:c0}
	c_0 \log \left( \frac{18}{c_0 c_1} \right) \leq c. 
\end{equation}
It suffices to assume that $n \geq 2/c_0$ (by increasing the constant $C$ in \eqref{eq:complower} if necessary).  

Let $\mathcal{N}_0$ be the $(3c_1)$-net of $\comp(c_0, c_1)$ given in Lemma \ref{lemma:compnet}.  In particular, 
\begin{equation} \label{eq:N0size}
	|\mathcal{N}_0| \leq \left( \frac{18}{c_0 c_1} \right)^{c_0 n}.
\end{equation}
Let $\mathcal{N}_M$ be a $(c\sqrt{n})$-net of $[-M \sqrt{n}, M \sqrt{n}]$.  $\mathcal{N}_M$ can be chosen so that 
\begin{equation} \label{eq:NMsize}
	|\mathcal{N}_M| \leq K,
\end{equation}
where $K$ is a constant depending only on $M$ and $c$.  

Assume the bad event in \eqref{eq:complower} occurs.  Then there exists $\lambda \in [-M \sqrt{n}, M \sqrt{n}]$ and $x \in \comp(c_0,c_1)$ such that
$$ \|(W - \lambda)x\| \leq c \sqrt{n} \quad \text{and} \quad \|W - \lambda\| \leq 2 M \sqrt{n}. $$
Hence, there exists $\lambda' \in \mathcal{N}_M$ and $x' \in \mathcal{N}_0$ such that
$$ \|x - x' \| \leq 3 c_1 \quad \text{and} \quad |\lambda - \lambda'| \leq c \sqrt{n}. $$
So, we obtain
\begin{align*}
	\| (W - \lambda') x' \| &\leq \| (W - \lambda) x' \| + |\lambda - \lambda'| \\
		&\leq \| (W - \lambda) x \| + 2 M \sqrt{n} \|x - x' \| + |\lambda - \lambda'| \\
		&\leq c \sqrt{n} + 6 M \sqrt{n} c_1 + c \sqrt{n} \\
		&\leq 3 c \sqrt{n}
\end{align*}
by our choice of $c_1$ in \eqref{eq:def:c1}.  Thus, it suffices to prove that
$$ \Prob \left( \inf_{x' \in \mathcal{N}_0} \inf_{\lambda' \in \mathcal{N}_M} \| (W - \lambda') x' \| \leq 3 c \sqrt{n} \right) \leq C' \exp(- c' n) $$
for some constants $C',c' > 0$.  To do so, we now apply the union bound.  Indeed, by \eqref{eq:fixedvector}, \eqref{eq:N0size}, and \eqref{eq:NMsize}, we obtain
\begin{align*}
	\Prob \left( \inf_{x' \in \mathcal{N}_0} \inf_{\lambda' \in \mathcal{N}_M} \| (W - \lambda') x' \| \leq 3 c \sqrt{n} \right) &\leq \sum_{x' \in \mathcal{N}_0} \sum_{\lambda' \in \mathcal{N}_M} \Prob \left( \| (W - \lambda') x ' \| \leq 3c \sqrt{n} \right) \\
		&\leq \left( \frac{18}{c_0 c_1} \right)^{c_0 n} K C \exp \left(-3 c n\right) \\
		&\leq K C \exp \left(-2 cn \right),
\end{align*}
where the last inequality follows from our choice of $c_0$ in \eqref{eq:def:c0}.  
\end{proof}

\section{Small ball probabilities via regularized LCD} \label{sec:regularizedLCD}

We now begin the proof of Theorems \ref{thm:vectors:deterministic} and \ref{thm:vectors:random}.  Let $\xi$ and $\zeta$ be real random variables.  Assume $\xi$ satisfies \eqref{eq:rvassump} for some $\eps_0, p_0, K_0 > 0$.  Let $W$ be an $n \times n$ Wigner matrix with atom variables $\xi$ and $\zeta$.  (Notice that no symmetry assumption is made on the atom variable $\xi$.)

In view of Lemma \ref{lemma:complower}, we fix the values of $c_0$ and $c_1$.  These values will remain fixed for the rest of the paper.  Let $x \in \incomp(c_0, c_1)$.  By Lemma \ref{lemma:incompressible}, at least $\frac{1}{2} c_0 c_1^2 n$ coordinates $x_k$ of $x$ satisfy \eqref{eq:incompressible}.  Following \cite{Vsym}, we define the constant
$$ c_{2} := \frac{1}{4} c_0 c_1^2. $$

For any vector $x \in \incomp(c_0, c_1)$, we assign a subset called $\spread(x) \subseteq [n]$ with
$$ |\spread(x)| = \lceil c_{2} n \rceil $$
such that \eqref{eq:incompressible} holds for all $k \in \spread(x)$.  At this point, we consider an arbitrary valid assignment of $\spread(x)$ to $x$.  The particular choice of assignment will be determined later.  In fact, for the proof of Theorem \ref{thm:vectors:random}, any choice will do; the proof of Theorem \ref{thm:vectors:deterministic} will require a more intelligent choice.  

We now define a regularized version of the LCD which allows us to consider only those coordinates in $\spread(x)$.  This new version of the LCD is designed to capture the amount of structure in the least structured part of the coefficients of $x$.  

\begin{definition}[Regularized LCD]
Let $\gamma \in (0, c_2)$ and $L \geq 1$.  Define the \emph{regularized LCD} of a vector $x \in \incomp(c_0, c_1)$ as
$$ \widehat{D}_L(x, \gamma) := \max \left\{ D_L (x_I / \|x_I\|) : I \subseteq \spread(x), |I| = \lceil \gamma n \rceil \right\}. $$
We let $I(x)$ denote the maximizing set $I$ in this definition.  
\end{definition}
\begin{remark}
Since the sets $I$ in this definition are subsets of $\spread(x)$, the bounds in \eqref{eq:incompressible} imply that
\begin{equation} \label{eq:xIbnd}
	\frac{\sqrt{\gamma} c_1}{\sqrt{2}} \leq \|x_I\| \leq \sqrt{\frac{ \lceil \gamma n \rceil}{c_0 n}}. 
\end{equation}
\end{remark}

\begin{lemma} \label{lemma:LCDlower}
For every $x \in \incomp(c_0, c_1)$ and every $\gamma \in (0,c_2)$ and $L \geq 1$, one has
$$ \widehat{D}_L(x, \gamma) \geq c \sqrt{\gamma n}. $$
Here $c \in (0,1)$ depends only on $c_0$ and $c_1$.  
\end{lemma}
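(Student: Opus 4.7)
The plan is to observe that $\widehat{D}_L(x,\gamma)$ is a maximum over subsets $I \subseteq \spread(x)$ of size $\lceil \gamma n \rceil$, so it will suffice to exhibit just one such subset for which $D_L(x_I/\|x_I\|) \geq c \sqrt{\gamma n}$. In fact, any valid choice of $I$ will work for this bound. This is a purely geometric statement: no actual LCD structure needs to be uncovered, only the trivial bound coming from the $\ell^\infty$-norm of a normalized restriction of $x$.

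First I would invoke Lemma \ref{lemma:LCDinfinity} applied to the unit vector $x_I/\|x_I\| \in S^{|I|-1}$, which yields
$$ D_L(x_I/\|x_I\|) \geq \frac{1}{2 \|x_I/\|x_I\|\|_\infty} = \frac{\|x_I\|}{2\|x_I\|_\infty}. $$
Next, since $I \subseteq \spread(x)$, each coordinate $x_k$ with $k \in I$ satisfies the upper bound in \eqref{eq:incompressible}, so
$$ \|x_I\|_\infty \leq \frac{1}{\sqrt{c_0 n}}. $$
For the lower bound on $\|x_I\|$, I would use the left inequality in \eqref{eq:xIbnd} from the remark following the definition of the regularized LCD, namely $\|x_I\| \geq \sqrt{\gamma} c_1/\sqrt{2}$, which is itself an immediate consequence of summing the lower bound in \eqref{eq:incompressible} over the $\lceil \gamma n \rceil$ coordinates of $I$.

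Combining these estimates gives
$$ D_L(x_I/\|x_I\|) \geq \frac{\sqrt{\gamma} c_1/\sqrt{2}}{2/\sqrt{c_0 n}} = \frac{c_1 \sqrt{c_0}}{2\sqrt{2}} \sqrt{\gamma n}, $$
and since $\widehat{D}_L(x,\gamma)$ dominates $D_L(x_I/\|x_I\|)$ for any particular choice of $I$, the conclusion follows with $c = c_1 \sqrt{c_0}/(2\sqrt{2})$, which depends only on $c_0$ and $c_1$ as required. There is no real obstacle here: the argument is a one-line consequence of Lemma \ref{lemma:LCDinfinity} combined with the spread-coordinate estimates from Lemma \ref{lemma:incompressible}. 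The point of the lemma is less that the proof is hard than that this very weak structural bound will serve as the base case in later arguments that need a lower bound on $\widehat{D}_L(x,\gamma)$ when no arithmetic information about $x$ is available.
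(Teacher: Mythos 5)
Your proof is correct and follows essentially the same route as the paper: fix any admissible $I \subseteq \spread(x)$, bound $\|x_I/\|x_I\|\|_\infty$ using the spread estimates \eqref{eq:incompressible} and \eqref{eq:xIbnd}, and apply Lemma \ref{lemma:LCDinfinity}. The only difference is that you make the constant $c = c_1\sqrt{c_0}/(2\sqrt{2})$ explicit, which the paper leaves implicit.
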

\begin{proof}
Let $I$ be a subset as in the definition of $\widehat{D}_L(x, \gamma)$.  Consider the coordinates of $x_I / \|x_I\|$.  From \eqref{eq:incompressible} and \eqref{eq:xIbnd}, we find
$$ \sup_{i \in I} \frac{|x_i|}{\|x_I\|} \leq \frac{C}{\sqrt{\gamma n}}, $$
where $C > 0$ depends only on $c_0$ and $c_1$.  Thus, by Lemma \ref{lemma:LCDinfinity}, we conclude that
$$ D_L(x_I / \|x_I\|) \geq \frac{ \sqrt{\gamma n} }{ 2 C }. $$
The conclusion now follows from the definition of $\widehat{D}_L(x, \gamma)$.  
\end{proof}

\subsection{Small ball probabilities via regularized LCD}
We now state a version of Corollary \ref{cor:small} for the regularized LCD.  

\begin{theorem}[Small ball probabilities via regularized LCD] \label{thm:smallregularized}
Let $\xi_1, \ldots, \xi_n$ be iid copies of a real random variable $\xi$ which satisfies \eqref{eq:rvassump} for some $\eps_0, p_0, K_0 > 0$.  Then there exists $C > 0$ (depending only on $c_0, c_1, c_2, \eps_0, p_0$, and $K_0$) such that the following holds.  Let $x = (x_1, \ldots, x_n) \in \incomp(c_0, c_1)$.  Consider a subset $J \subseteq [n]$ such that $I(x) \subseteq J$.  Consider also the sum $S_J = \sum_{k \in J} x_k \xi_k$.  Then, for every $\gamma \in (0, c_2)$, $L \geq p_0^{-1/2}$, and $ t \geq 0$, one has
$$ \Le(S_J, t) \leq C L \left( \frac{t}{\sqrt{\gamma}} + \frac{1}{\widehat{D}_L(x, \gamma)} \right). $$
\end{theorem}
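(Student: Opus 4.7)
The plan is to reduce the claim to Corollary \ref{cor:small} by restricting the sum to the maximizing set $I(x)$ coming from the definition of $\widehat{D}_L(x,\gamma)$, and then normalizing.

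First, since $I(x) \subseteq J$ and the $\xi_k$ are independent, I would apply the Restriction Lemma (Lemma \ref{lemma:restriction}) to obtain
$$\Le(S_J, t) \leq \Le\!\left(\sum_{k \in I(x)} x_k \xi_k,\, t\right).$$
The vector $x_{I(x)}$ is not a unit vector, but its normalization $y := x_{I(x)}/\|x_{I(x)}\|$ lies in $S^{I(x)-1}$, and by the very definition of the regularized LCD, $D_L(y) = \widehat{D}_L(x,\gamma)$. Pulling out the scalar $\|x_{I(x)}\|$ and rescaling the radius $t$ accordingly,
$$\Le\!\left(\sum_{k \in I(x)} x_k \xi_k,\, t\right) = \Le\!\left(\sum_{k \in I(x)} y_k \xi_k,\, \frac{t}{\|x_{I(x)}\|}\right).$$

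Next I would apply Corollary \ref{cor:small} to the unit coefficient vector $y$ with the iid atoms $\{\xi_k : k \in I(x)\}$. For every $L \geq p_0^{-1/2}$ this gives
$$\Le\!\left(\sum_{k \in I(x)} y_k \xi_k,\, s\right) \leq C L\left(s + \frac{1}{\widehat{D}_L(x,\gamma)}\right),$$
so substituting $s = t/\|x_{I(x)}\|$ yields
$$\Le(S_J, t) \leq C L \left( \frac{t}{\|x_{I(x)}\|} + \frac{1}{\widehat{D}_L(x,\gamma)} \right).$$

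Finally, I would absorb the normalizing factor by using the lower bound on $\|x_{I(x)}\|$ provided by \eqref{eq:xIbnd}, namely $\|x_{I(x)}\| \geq c_1\sqrt{\gamma}/\sqrt{2}$. This gives $1/\|x_{I(x)}\| \leq \sqrt{2}/(c_1\sqrt{\gamma})$, and after adjusting the constant $C$ (to depend also on $c_1$, and hence on $c_0, c_1, c_2$) we obtain exactly
$$\Le(S_J, t) \leq C L \left( \frac{t}{\sqrt{\gamma}} + \frac{1}{\widehat{D}_L(x,\gamma)} \right),$$
as desired. There is no real obstacle here: the statement is essentially a bookkeeping consequence of Corollary \ref{cor:small}, and the only point requiring any attention is that the factor $1/\sqrt{\gamma}$ on the right-hand side is produced precisely by the incompressibility lower bound $\|x_{I(x)}\| \gtrsim \sqrt{\gamma}$, which in turn is the reason the regularization is made through subsets of $\spread(x)$ of size exactly $\lceil \gamma n \rceil$.
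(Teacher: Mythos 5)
Your proposal is correct and follows exactly the same path as the paper: restrict the sum to $I(x)$ via Lemma \ref{lemma:restriction}, normalize $x_{I(x)}$ to land in the unit sphere, apply Corollary \ref{cor:small}, and finish with the lower bound on $\|x_{I(x)}\|$ from \eqref{eq:xIbnd}. Nothing is missing.
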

\begin{proof}
Let $I := I(x)$.  By Lemma \ref{lemma:restriction}, 
$$ \Le(S_J, t) \leq \Le(S_I, t). $$
Thus, by Corollary \ref{cor:small}, we obtain
$$ \Le(S_J, t) \leq \Le(S_I, t) \leq CL \left( \frac{t}{\|x_I\|} + \frac{1}{D_L(x_I/\|x_I\|)} \right) = CL \left( \frac{t}{\|x_I\|} + \frac{1}{\widehat{D}_L(x, \gamma)} \right). $$
Applying \eqref{eq:xIbnd}, we conclude that
$$ \Le(S_J, t) \leq CL \left( \frac{t\sqrt{2}}{c_1 \sqrt{\gamma}} + \frac{1}{\widehat{D}_L(x, \gamma)} \right), $$
and the proof is complete.  
\end{proof}

We will use Theorem \ref{thm:smallregularized} to verify the following bound on the small ball probabilities of $(W - \lambda)x$.  
\begin{theorem}[Small ball probabilities for $(W - \lambda) x$ via regularized LCD] \label{thm:smallregularizedW}
Let $\xi$ and $\zeta$ be real random variables.  Assume $\xi$ satisfies \eqref{eq:rvassump} for some $\eps_0, p_0, K_0 > 0$.  Let $W$ be an $n \times n$ Wigner matrix with atom variables $\xi$ and $\zeta$.  Let $x \in \incomp(c_0, c_1)$ and $\lambda \in \mathbb{R}$.  Then, for every $\gamma \in (0,c_2)$, $L \geq p_0^{-1/2}$, and $t \geq 0$, one has
$$ \Le( (W-\lambda)x, t \sqrt{n} ) \leq \left[ \frac{C L t}{\sqrt{\gamma}} + \frac{CL}{ \widehat{D}_L(x, \gamma)} \right]^{n - \lceil \gamma n \rceil}, $$
where the constant $C$ only depends on $c_0, c_1, c_2, \eps_0, p_0$, and $K_0$.  
\end{theorem}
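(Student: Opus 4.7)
The plan is to pass to a convenient subset of coordinates, decouple the relevant Wigner entries by conditioning on the ``complementary'' blocks of $W$, apply Theorem \ref{thm:smallregularized} coordinate-by-coordinate, and then tensorize. Set $B := I(x) \subseteq \spread(x)$, so $|B| = \lceil \gamma n \rceil$, and let $m := n - \lceil \gamma n \rceil$. Since the projection $\pi_{B^c} \colon \R^n \to \R^{B^c}$ is $1$-Lipschitz, $\Le(Z,s) \leq \Le(\pi_{B^c}(Z),s)$ for any random vector $Z$ and any $s \geq 0$, so it suffices to bound $\Le\bigl(Y, t\sqrt{n}\bigr)$, where $Y := ((W-\lambda)x)_{B^c}$.

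The key observation is that for $i \in B^c$,
\[
 Y_i \;=\; \sum_{j \in B} w_{ij} x_j \;+\; a_i, \qquad a_i := \sum_{j \in B^c} w_{ij} x_j - \lambda x_i.
\]
Let $\mathcal{F}$ be the $\sigma$-algebra generated by the $B \times B$ block of $W$ and the $B^c \times B^c$ block of $W$ (including all diagonal entries). Then each $a_i$ is $\mathcal{F}$-measurable, while the entries $\{w_{ij} : i \in B^c, j \in B\}$ are strictly off-diagonal, occupy distinct positions in the upper triangle of $W$, and hence are iid copies of $\xi$ independent of $\mathcal{F}$. Consequently, conditionally on $\mathcal{F}$, the coordinates $(Y_i)_{i \in B^c}$ are independent, and each $Y_i$ is a translate of $\sum_{j \in B} w_{ij} x_j$ by an $\mathcal{F}$-measurable constant that is harmless under $\Le$ (which is translation-invariant).

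Now Theorem \ref{thm:smallregularized}, applied with $J = B = I(x)$, gives
\[
 \Le\!\left(Y_i, t \relmiddle| \mathcal{F}\right) \;\leq\; \frac{CL}{\sqrt{\gamma}}\left(t + \frac{\sqrt{\gamma}}{\widehat{D}_L(x,\gamma)}\right)
\]
for every $i \in B^c$. I then apply the tensorization lemma (Lemma \ref{lemma:tensor}) to the conditionally independent vector $Y \in \R^{B^c}$ at scale $s\sqrt{m}$, and substitute $s = t\sqrt{n/m}$, using $n/m \leq (1-c_2)^{-1}$ to absorb the resulting dimensional factor into the constant $C$. This yields a bound on $\Le\bigl(Y, t\sqrt{n} \mid \mathcal{F}\bigr)$ of the form $\left[CLt/\sqrt{\gamma} + CL/\widehat{D}_L(x,\gamma)\right]^m$ which is deterministic (independent of $\mathcal{F}$). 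Finally, for each fixed shift $u$, write $\Prob(\|(W-\lambda)x - u\| \leq t\sqrt{n})$ as $\E[\Prob(\cdot \mid \mathcal{F})]$, bound the inner probability by the conditional Lévy function, and take a supremum over $u$ to conclude.

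I expect the only real obstacle to be the careful bookkeeping in the conditioning step: one must argue that, after freezing the $B \times B$ and $B^c \times B^c$ blocks, the random variables entering different coordinates $Y_i$ (namely the rows of the $B^c \times B$ block) are genuinely disjoint and jointly independent despite $W$ being symmetric. Everything else---applying Theorem \ref{thm:smallregularized}, Lemma \ref{lemma:tensor}, and the rescaling from $\sqrt{m}$ to $\sqrt{n}$---is then routine and only affects the absolute constant $C$.
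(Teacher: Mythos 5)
Your proof is correct and follows essentially the same approach as the paper: decompose $W-\lambda$ into blocks according to $I(x)$, condition on the diagonal blocks so the rows of $W_{I^c\times I}$ are iid, apply Theorem \ref{thm:smallregularized} coordinate-wise, tensorize via Lemma \ref{lemma:tensor}, and rescale from $\sqrt{|I^c|}$ to $\sqrt{n}$. The only cosmetic difference is that you phrase the reduction via the projection $\pi_{B^c}$ and a conditional L\'evy function, whereas the paper lower-bounds $\|(W-\lambda)x-u\|$ directly and conditions only on the $I^c\times I^c$ block (your extra conditioning on the $B\times B$ block is harmless).
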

\begin{proof}
The proof presented here is based on the arguments given in \cite[Section 6.3]{Vsym}.  Assume without loss of generality that $n \geq 2$, as the conclusion is trivial in the case when $n = 1$.  Our goal is to bound above the probability
$$ \Prob \left( \| (W - \lambda ) x - u \| \leq t \sqrt{n} \right) $$
for any arbitrary vector $u \in \mathbb{R}^n$.  

Let $I := I(x)$ be the maximizing set from the definition of $\widehat{D}_L(x, \gamma)$.  Following the proof of Lemma \ref{lemma:fixvector}, we decompose the matrix $W - \lambda$ into the sub-matrices $W_{I \times I} - \lambda$, $W_{I \times I^c}$, $W_{I^c \times I}$, and $W_{I^c \times I^c} - \lambda$; we similarly decompose the vectors $x$ and $u$.  Thus,
\begin{align*}
	\| (W - \lambda) x - u \|^2 &\geq \| W_{I^c \times I} x_I + (W_{I^c \times I^c}  - \lambda)x_{I^c} - u_{I^c} \|^2 = \| W_{I^c \times I} x_I + a \|,
\end{align*}
where
$$ a := (W_{I^c \times I^c} - \lambda) x_{I^c} - u_{I^c}. $$
Observe that $W_{I^c \times I^c}$ is independent of $W_{I^c \times I}$.  Thus, we condition on $W_{I^c \times I^c}$ and now treat $a = (a_j)_{j \in I^c}$ as a constant vector.  Therefore, we have
\begin{equation} \label{eq:Wlambdalow}
	\| (W - \lambda ) x - u \|^2 \geq \sum_{j \in [n] \setminus I} V_j^2, 
\end{equation}
where 
$$ V_j := \sum_{i \in I} w_{ji} x_i + a_j, \quad j \in I^c. $$

By assumption, the entries of $W_{I^c \times I}$ are iid copies of $\xi$.  From Theorem \ref{thm:smallregularized} (taking $J = I$), we obtain
$$ \Le(V_j, t) \leq CL \left( \frac{t}{\sqrt{\gamma}} + \frac{1}{\widehat{D}_L(x, \gamma)} \right), \quad j \in I^c. $$
Since $\{V_j : j \in I^c\}$ is an independent collection of random variables, we apply Lemma \ref{lemma:tensor} to obtain
$$ \Prob \left( \sqrt{ \sum_{j \in [n] \setminus I} V_j^2} \leq t \sqrt{ |I^c|} \right) \leq \left[ C' L \left( \frac{t}{ \sqrt{\gamma}} + \frac{1}{ \widehat{D}_L(x, \gamma)} \right) \right]^{|I^c|}, $$
where $C'$ depends only on $C$.  In view of \eqref{eq:Wlambdalow}, we conclude that
$$ \Prob \left( \| (W - \lambda) x - u \| \leq t \sqrt{ |I^c|} \right) \leq \left[ C' L \left( \frac{t}{ \sqrt{\gamma}} + \frac{1}{ \widehat{D}_L(x, \gamma)} \right) \right]^{|I^c|}. $$
The proof is now complete since $|I^c| = n - \lceil \gamma n \rceil \geq 1/4 n$.  
\end{proof}

\subsection{Additive structure}
With Theorem \ref{thm:smallregularizedW} in hand, we can now estimate the additive structure of the eigenvectors of a Wigner matrix.  

\begin{theorem}[Eigenvector structure] \label{thm:structure}
Let $\xi$ and $\zeta$ be real random variables.  Assume $\xi$ satisfies \eqref{eq:rvassump} for some $\eps_0, p_0, K_0 > 0$.  Let $W$ be an $n \times n$ Wigner matrix with atom variables $\xi$ and $\zeta$.  Let $M \geq 1$, and recall the event $\mathcal{B}_{W, M}$ defined in \eqref{eq:def:BWM}.  Let $\alpha > 0$.  Then there exist constants $C, c, c' > 0$ and $\gamma \in (0, c_2)$ (depending on $c_0, c_1, c_2, \eps_0, p_0, K_0, \alpha$, and $M$) such that, for every $p_{0}^{-1/2} \leq L \leq n^c$, one has
$$ \Prob \left( \exists \text{ eigenvector } v \text{ of } W : v \in \incomp(c_0, c_1), \widehat{D}_L(v, \gamma) \leq n^{\alpha} \text{ and } \mathcal{B}_{W,M} \right) \leq C n^{-c' n}. $$
\end{theorem}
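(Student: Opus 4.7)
This is a standard net-plus-small-ball-plus-union-bound argument in the spirit of Rudelson--Vershynin \cite{RVsing, RVlo, Vsym}. On the event $\mathcal{B}_{W,M}$, every eigenvalue of $W$ lies in $[-M\sqrt{n}, M\sqrt{n}]$. If $v$ is a unit eigenvector with eigenvalue $\lambda$, then restricting $(W-\lambda)v = 0$ to the rows indexed by $I^c$, where $I = I(v) \subseteq \spread(v)$ is the maximizer in the definition of $\widehat{D}_L(v,\gamma)$, gives $W_{I^c\times I}\, v_I = -(W_{I^c\times I^c} - \lambda)v_{I^c}$. Since $W_{I^c\times I}$ is independent of $W_{I^c\times I^c}$, after conditioning on the latter the right-hand side becomes a fixed vector, and the problem reduces to a small-ball estimate for $W_{I^c\times I}$ applied to a nearly-unit vector in $\mathbb{R}^I$ whose LCD is at most $n^\alpha$.

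\textbf{Net construction.} Dyadically decompose the bad event by $\widehat{D}_L(v,\gamma)\in[D,2D]$, where $D$ ranges over powers of $2$ in $[c\sqrt{\gamma n},\,n^\alpha]$ (the lower endpoint comes from Lemma \ref{lemma:LCDlower}); this gives $O(\log n)$ scales. Fix $\spread(v)$ and $I(v)$ at cost of at most $4^n$ subsets. For each such $I$, build a net $\mathcal{N}_{I,D}$ of unit vectors $y\in\mathbb{R}^I$ with $D_L(y)\le 2D$: by the definition of LCD every such $y$ lies within distance $\beta:=CL\sqrt{\log D}/D$ of some $p/\theta$ with $p\in\mathbb{Z}^I$, $\|p\|\le 3D$, and $\theta\in[L,2D]$. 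A lattice-point count together with a multiplicative discretization of $\theta$ yields $|\mathcal{N}_{I,D}|\le (CD)^{\lceil\gamma n\rceil}$. Net $[-M\sqrt{n}, M\sqrt{n}]$ at step $\rho:=M\sqrt{n}\beta$, producing at most $D/L$ points.

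\textbf{Approximation and small-ball bound.} Given a hypothetical bad eigenvector $v$ with eigenvalue $\lambda$, take $y\in\mathcal{N}_{I,D}$ with $\|v_I/\|v_I\|-y\|\le\beta$ and $\lambda'\in\mathcal{N}_\Lambda$ with $|\lambda-\lambda'|\le\rho$. Using $\|v_I\|\ge\sqrt{\gamma}c_1/\sqrt{2}$ (from the remark after the definition of the regularized LCD) and $\|W\|\le M\sqrt{n}$, the triangle inequality yields
$$\|W_{I^c\times I}\,y-u\|\le CM\sqrt{n}\,\beta/\sqrt{\gamma}$$
for some $u\in\mathbb{R}^{I^c}$ depending only on $(W_{I^c\times I^c},\lambda',v_{I^c},\|v_I\|)$, which is independent of $W_{I^c\times I}$. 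Conditioning on $W_{I^c\times I^c}$ and applying Corollary \ref{cor:small} to each coordinate $(W_{I^c\times I}\,y)_j$ followed by the tensorization Lemma \ref{lemma:tensor} gives, uniformly in $u$,
$$\Prob\bigl(\|W_{I^c\times I}\,y-u\|\le CM\sqrt{n}\beta/\sqrt{\gamma}\bigr)\le\left[\frac{C'L^2M\sqrt{\log D}}{D\sqrt{\gamma}}\right]^{(1-\gamma)n}.$$

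\textbf{Union bound and tuning constants.} Multiplying over the $O(\log n)$ dyadic scales, the $4^n$ choices of $(\spread(v),I(v))$, the net $\mathcal{N}_{I,D}$, and the eigenvalue net, the overall probability is of order
$$n^{O(1)}\cdot 4^n\cdot D^{\gamma n}\cdot\left(\frac{C''L^2M\sqrt{\log D}}{D\sqrt{\gamma}}\right)^{(1-\gamma)n}.$$
With $L\le n^c$ and $D\le n^\alpha$ the dominant part of the exponent of $n$ is $[\alpha(2\gamma-1)+2c(1-\gamma)]\,n$, and the main obstacle is selecting the parameters compatibly: we must choose $\gamma\in(0,1/2)$ small enough that the net entropy $D^{\gamma n}$ is beaten by the small-ball gain $D^{-(1-\gamma)n}$, and then pick $c>0$ small enough (in terms of $\alpha$ and $\gamma$) that the $L^{2(1-\gamma)n}$ factor is absorbed. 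With such choices the exponent is at most $-c'n\log n$ for some $c'>0$, yielding the claimed bound $Cn^{-c'n}$. The smaller dyadic scales $D\sim\sqrt{\gamma n}$ are handled automatically, since then the small-ball factor is already super-exponentially small on its own.
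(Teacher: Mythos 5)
Your proposal diverges from the paper's proof in one crucial respect, and that divergence introduces a genuine gap. In the opening paragraph you write that after conditioning on $W_{I^c\times I^c}$, the right-hand side $-(W_{I^c\times I^c}-\lambda)v_{I^c}$ ``becomes a fixed vector,'' and in the small-ball step you assert that the vector $u$ ``depends only on $(W_{I^c\times I^c},\lambda',v_{I^c},\|v_I\|)$, which is independent of $W_{I^c\times I}$.'' This is false: $v$ is an eigenvector of the \emph{full} matrix $W$, so $v_{I^c}$, $\|v_I\|$, and even the index set $I = I(v)$ all depend on $W_{I^c\times I}$ as well. Consequently $u$ depends on $W_{I^c\times I}$, and the bound ``uniformly in $u$'' coming from the L\'evy concentration function (Corollary \ref{cor:small} plus Lemma \ref{lemma:tensor}) is not applicable --- the L\'evy concentration function controls $\sup_u \Prob(\|Z-u\|\le t)$ for \emph{fixed} $u$, not $\Prob(\exists u(Z):\|Z-u(Z)\|\le t)$, and one can easily choose an adversarial $u(Z)$ so that the latter probability is $1$.

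The paper avoids this issue by netting the \emph{full} $n$-dimensional vector: it introduces the sub-level sets $S_D=\{x\in\incomp(c_0,c_1):\widehat D_L(x,\gamma)\le D\}$ and invokes Vershynin's covering bound (Lemma \ref{lemma:LCDnet}, from \cite[Proposition 7.4]{Vsym}) to get a $(2\beta)$-net of $S_D\setminus S_{D/2}$ of cardinality $\bigl[\tilde C D/(\gamma n)^{\tilde c}\bigr]^n D^{1/\gamma}$, rather than the trivial $(3/\beta)^n$. Because each net point $x'$ is deterministic, Theorem \ref{thm:smallregularizedW} can legitimately be applied: in its proof, $x'_{I^c}$ is a fixed deterministic vector, so the conditional small-ball argument is sound. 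The cost is that the net now has the full $D^n$ factor, which by itself would lose to the small-ball gain $D^{-(1-\gamma)n}$; the $(\gamma n)^{-\tilde c n}$ saving coming from Vershynin's lemma is exactly what rescues the union bound. Your approach, by netting only $v_I/\|v_I\|$ in dimension $\lceil\gamma n\rceil$, dodges the large $D^n$ factor but then has no control over $v_{I^c}$. And patching by also netting $v_{I^c}$ at the required scale $\sim\beta$ costs roughly $(1/\beta)^{(1-\gamma)n}\approx D^{(1-\gamma)n}$, which exactly cancels the small-ball gain --- so your route does not obviously recover. You should use the regularized-LCD net of the full vector as in the paper, or find a replacement for Lemma \ref{lemma:LCDnet} that exploits the LCD constraint to beat the trivial covering number.

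One smaller remark: in the lattice-point count you write $|\mathcal N_{I,D}|\le (CD)^{\lceil\gamma n\rceil}$. A volume estimate in $\mathbb R^{\lceil\gamma n\rceil}$ would actually give you an extra $(\gamma n)^{-\gamma n/2}$ factor, which is exactly the kind of saving that makes the paper's argument go; you appear to have discarded it. But even restoring it does not fix the fundamental issue about $u$ described above.
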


In order to prove Theorem \ref{thm:structure}, we will need a net for the set of vectors with small LCD.  We begin with a definition.  
\begin{definition}[Sub-level sets of LCD] \label{def:sublevelLCD}
Fix $\gamma \in (0, c_2)$.  For every value $D \geq 1$, we define the set
$$ S_D := \left\{ x \in \incomp(c_0, c_1) : \widehat{D}_L(x, \gamma) \leq D \right\}. $$
\end{definition}

We will make use of the following bound for the covering number of $S_D \setminus S_{D/2}$, which follows directly from \cite[Proposition 7.4]{Vsym}.

\begin{lemma}[Covering sub-level sets of regularized LCD] \label{lemma:LCDnet}
There exist $\tilde{C}, \tilde{c} > 0$ which depend only on $c_0, c_1$, and such that the following holds.  Let $\gamma \in (\tilde{C}/n, c_2/3)$ and $L \geq 1$.  For every $D \geq 2$, the set $S_D \setminus S_{D/2}$ has a $(2\beta)$-net $\mathcal{N}$ such that
\begin{equation} \label{eq:def:betaN}
	\beta := \frac{L \sqrt{\log (3D)} }{ \sqrt{\gamma} D}, \quad |\mathcal{N}| \leq \left[ \frac{ \tilde{C} D }{ (\gamma n)^{\tilde{c}} }\right]^n D^{1/\gamma}. 
\end{equation}
\end{lemma}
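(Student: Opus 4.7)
The plan is to prove Theorem \ref{thm:structure} by a union-bound argument that combines the net from Lemma \ref{lemma:LCDnet} for dyadic sub-level sets $S_{2D}\setminus S_D$, the small-ball estimate of Theorem \ref{thm:smallregularizedW} for $(W-\lambda)x$, and a discretization of the eigenvalue window $[-M\sqrt n,M\sqrt n]$. First I would reduce the problem to a uniform estimate: if $v$ is an eigenvector with eigenvalue $\lambda$, then $(W-\lambda)v=0$, and on $\mathcal{B}_{W,M}$ the eigenvalue $\lambda$ lies in $[-M\sqrt n,M\sqrt n]$. Hence the bad event is contained in
$$ \bigcup_{D} \Bigl\{\exists\, x\in(S_{2D}\setminus S_D)\cap\incomp(c_0,c_1),\ \exists\,\lambda\in[-M\sqrt n,M\sqrt n]\,:\,(W-\lambda)x=0\Bigr\}, $$
where $D$ runs over dyadic values between $c\sqrt{\gamma n}$ (by Lemma \ref{lemma:LCDlower}) and $n^{\alpha}$; there are only $O(\log n)$ such scales, so it suffices to control one scale $D$ and sum.

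Second, fix a dyadic scale $D$. Let $\mathcal{N}_D$ be the $(2\beta)$-net of $S_{2D}\setminus S_D$ from Lemma \ref{lemma:LCDnet}, with $\beta = L\sqrt{\log(6D)}/(\sqrt\gamma D)$, and let $\mathcal{M}_D$ be a $(\beta\sqrt n)$-net of $[-M\sqrt n,M\sqrt n]$ (of size $O(M/\beta)$, polynomial in $n$). For any bad pair $(x,\lambda)$ in the shell, there exist $x'\in\mathcal{N}_D$ with $\|x-x'\|\leq 2\beta$ and $\lambda'\in\mathcal{M}_D$ with $|\lambda-\lambda'|\leq\beta\sqrt n$, whence on $\mathcal{B}_{W,M}$
$$ \|(W-\lambda')x'\|\;\leq\;\|W-\lambda\|\,\|x-x'\|+|\lambda-\lambda'|\,\|x'\|\;\leq\;4M\beta\sqrt n+\beta\sqrt n\;\leq\;5M\beta\sqrt n. $$

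Third, since $x'\in S_{2D}\setminus S_D$ satisfies $\widehat D_L(x',\gamma)> D$, Theorem \ref{thm:smallregularizedW} applied with $t=5M\beta$ gives, for every fixed $(x',\lambda')$,
$$ \Prob\bigl(\|(W-\lambda')x'\|\leq 5M\beta\sqrt n\bigr)\;\leq\;\left[\frac{C L\,(5M\beta)}{\sqrt\gamma}+\frac{CL}{D}\right]^{(1-\gamma)n}\!\!\leq\,\left[\frac{C' L^{2}M\sqrt{\log D}}{\gamma\,D}\right]^{(1-\gamma)n}. $$
Multiplying by $|\mathcal{N}_D|\cdot|\mathcal{M}_D|$ and using Lemma \ref{lemma:LCDnet}, the contribution of scale $D$ to the bad-event probability is at most
$$ \left[\frac{\tilde C\,(2D)}{(\gamma n)^{\tilde c}}\right]^{\!n}\!(2D)^{1/\gamma}\cdot\frac{CM}{\beta}\cdot\left[\frac{C'L^{2}M\sqrt{\log D}}{\gamma D}\right]^{(1-\gamma)n}. $$
The $D^{n}$ from the net cardinality cancels the $D^{-(1-\gamma)n}$ from the small-ball bound to leave $D^{\gamma n}$; using $D\leq n^{\alpha}$ and $L\leq n^{c}$, the dominant scaling of the whole expression is of the form
$$ n^{-\tilde c\,n}\cdot n^{\alpha\gamma n}\cdot n^{2cn}\cdot(\text{polylog/polynomial in }n). $$

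Fourth, I would choose the parameters to absorb all the subdominant factors. Fix $\gamma\in(0,c_2)$ small enough that $\alpha\gamma<\tilde c/3$, and take $c>0$ small enough that $2c<\tilde c/3$; then the exponent of $n$ in the product is at most $-\tilde c/3\cdot n$, giving a per-scale bound of $n^{-c'n}$ for $c':=\tilde c/6$. Summing over the $O(\log n)$ dyadic scales yields the claimed bound $Cn^{-c'n}$. The main obstacle is the bookkeeping in this last step: one must match the $n$-th power from the net cardinality against the $(1-\gamma)n$-th power from the small-ball bound so that the $D$-dependence collapses to $D^{\gamma n}$, and then tune $\gamma$ in terms of $\alpha$ and $c$ in terms of $\tilde c$ so that the surviving $n^{(\alpha\gamma+2c-\tilde c)n}$ remains super-polynomially small. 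This is precisely the reason $\gamma$ must be chosen small (depending on $\alpha$) and $L$ only sub-polynomially large.
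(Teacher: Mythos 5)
There is a fundamental mismatch here: the statement you were asked to prove is Lemma \ref{lemma:LCDnet} itself --- the existence of a $(2\beta)$-net of the shell $S_D \setminus S_{D/2}$ with the cardinality bound $\bigl[\tilde C D/(\gamma n)^{\tilde c}\bigr]^n D^{1/\gamma}$ --- but your proposal never proves it. Instead you \emph{invoke} this net as a black box (``Let $\mathcal{N}_D$ be the $(2\beta)$-net of $S_{2D}\setminus S_D$ from Lemma \ref{lemma:LCDnet}'') and go on to prove Theorem \ref{thm:structure}, i.e.\ the union-bound argument combining the net, the shift net $\mathcal{M}_D$ of $[-M\sqrt n, M\sqrt n]$, Theorem \ref{thm:smallregularizedW}, and the dyadic decomposition over scales $D$ between $c\sqrt{\gamma n}$ and $n^\alpha$. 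That argument is essentially the paper's proof of Theorem \ref{thm:structure} and your bookkeeping (cancelling $D^n$ against $D^{-(1-\gamma)n}$, then choosing $\gamma$ small relative to $\alpha$ and $c$ small relative to $\tilde c$) is sound, but it leaves the actual target statement entirely unproved; you cannot establish a lemma by exhibiting how it would be used.

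What is genuinely needed for Lemma \ref{lemma:LCDnet} is a construction (or citation) showing that the sub-level-set shell admits a net \emph{much smaller} than the trivial $(1+2/\beta)^n \approx (3/\beta)^n$ bound of Lemma \ref{lemma:net}; the whole point is the gain $(\gamma n)^{\tilde c}$ in the denominator and the extra factor $D^{1/\gamma}$, which come from exploiting the arithmetic structure forced by $\widehat D_L(x,\gamma) \le D$: on the maximizing set $I(x)$ of size $\lceil \gamma n \rceil$, the normalized restriction $x_{I(x)}/\|x_{I(x)}\|$ lies close to a rational point $p/\theta$ with denominator $\theta \le D$, so that part of the sphere can be covered by discretizing $\theta$, the integer vector $p$, and the choice of the subset $I$, while the remaining coordinates are covered by an ordinary net; counting lattice points in a ball of radius $O(D)$ in dimension $\lceil \gamma n\rceil$ is what produces the improvement over the generic volumetric bound. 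The paper does not carry this out either --- it obtains the lemma directly from Proposition 7.4 of Vershynin \cite{Vsym} --- so an acceptable proof would either reproduce that lattice-counting construction or cite it; your proposal does neither.
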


The main advantage of Lemma \ref{lemma:LCDnet} is that it offers a substantially better bound than Lemma \ref{lemma:net} due to the presence of the term $(\lambda n)^{\tilde{c}}$.  Indeed, Lemma \ref{lemma:net} only offers the trivial bound $(3/\beta)^n$.  

We are now ready to prove Theorem \ref{thm:structure}.

\begin{proof}[Proof of Theorem \ref{thm:structure}]
The proof presented here is based on the arguments from \cite[Section 7.2]{Vsym}.  Let $\gamma \in (0, c_2/3)$ and $c \in (0, \alpha)$ be small parameters to be chosen later.  Without loss of generality, we shall assume $n$ is sufficiently large in terms of the parameters $c_0, c_1, c_2, \eps_0, p_0, K_0, \alpha$, and $M$.  This assumption is justified by simply increasing the constant $C$ appearing in the conclusion of the theorem.  For instance, we will assume that $n$ is sufficiently large so that $\gamma$ satisfies the assumptions of Lemma \ref{lemma:LCDnet} (i.e. $\gamma > \tilde{C}/n$).  

We first observe that it suffices to bound above the probability 
$$ \Prob \left( \inf_{x \in S_{n^\alpha} } \inf_{\lambda \in [-M\sqrt{n}, M \sqrt{n}]} \| (W - \lambda) x \| = 0 \text{ and } \mathcal{B}_{W, M} \right), $$
where $S_{n^\alpha}$ is defined in Definition \ref{def:sublevelLCD}.  We begin by bounding the probability of a simpler event.  Let $2 \leq D \leq n^{\alpha}$.  We will first obtain a bound for
\begin{equation} \label{eq:probSDD2}
	\Prob \left( \inf_{x \in S_D \setminus S_{D/2}} \inf_{\lambda \in [-M\sqrt{n}, M \sqrt{n}]} \| (W - \lambda) x \| = 0 \text{ and } \mathcal{B}_{W, M} \right).
\end{equation}
Indeed, choose a $(2\beta)$-net $\mathcal{N}$ of $S_{D} \setminus S_{D/2}$ according to Lemma \ref{lemma:LCDnet}.  By the assumptions on $L$ and $D$, we find
$$ \frac{n^{-\alpha}}{\sqrt{\gamma}} \leq | \beta| \leq \frac{n^{\alpha +1}}{\sqrt{\gamma}} $$
for $n$ sufficiently large.  Let $\mathcal{N}_M$ be a $(M \beta \sqrt{n})$-net of $[-M \sqrt{n}, M \sqrt{n}]$; $\mathcal{N}_M$ can be chosen so that
\begin{equation} \label{eq:NMcard}
	| \mathcal{N}_M | \leq \frac{K}{\beta} \leq K n^{\alpha}, 
\end{equation}
where $K$ is an absolute constant.

Assume the bad event in \eqref{eq:probSDD2} occurs.  Then there exist $x \in S_D \setminus S_{D/2}$ and $\lambda \in [-M \sqrt{n}, M \sqrt{n}]$ such that
$$ (W - \lambda)x = 0, \quad \|W - \lambda\| \leq 2M \sqrt{n}. $$
Thus, there exist $x' \in \mathcal{N}$ and $\lambda' \in \mathcal{N}_M$ such that
$$ \|x - x' \| \leq 2\beta, \quad |\lambda - \lambda'| \leq M \beta \sqrt{n}. $$
Hence, we obtain
\begin{align*}
	\| (W - \lambda') x' \| \leq \|(W - \lambda)x \| + 2 M \sqrt{n} \|x - x'\| + |\lambda - \lambda'| \leq 5 M \beta \sqrt{n}.
\end{align*}
Therefore, it will suffice to bound above the probability 
$$ \Prob \left( \inf_{x' \in \mathcal{N}} \inf_{\lambda' \in \mathcal{N}_M} \| (W - \lambda') x' \| \leq 5 M \beta \sqrt{n} \right). $$

By Theorem \ref{thm:smallregularizedW}, for any $x' \in \mathcal{N}$ and $\lambda' \in \mathbb{R}$, 
$$ \Prob \left( \| (W - \lambda')x' \| \leq 5 M \beta \sqrt{n} \right) \leq \left[ \frac{ CL \beta}{\sqrt{\gamma}} + \frac{CL}{D} \right]^{n - \lceil \gamma n \rceil}, $$
where we absorbed the factor $5M$ into the constant $C$.  Since the term $\beta/ \sqrt{\gamma}$ dominates the term $1/D$, we conclude that
$$ \Prob \left( \| (W - \lambda')x' \| \leq 5 M \beta \sqrt{n} \right) \leq \left[ \frac{C' n^{2c} \sqrt{\log (3 n^{\alpha} ) } }{ \sqrt{\gamma} D} \right]^{n - \lceil \gamma n \rceil}. $$
Thus, applying \eqref{eq:def:betaN}, \eqref{eq:NMcard}, and the union bound, we obtain
\begin{align*}
	\Prob &\left( \inf_{x' \in \mathcal{N}} \inf_{\lambda' \in \mathcal{N}_M} \| (W - \lambda')x' \| \leq 5 M \beta \sqrt{n} \right) \\
	&\qquad\qquad\leq |\mathcal{N} | |\mathcal{N}_M| \left[ \frac{C' n^{2c} \sqrt{\log (3 n^{\alpha} ) } }{ \sqrt{\gamma} D} \right]^{n - \lceil \gamma n \rceil} \\
	&\qquad\qquad\leq K n^{\alpha} \left[ \frac{ \tilde{C} C' n^{2c} \sqrt{\log (3n^\alpha)} }{ (\gamma n)^{\tilde{c}} \sqrt{\gamma} } \right]^n n^{\alpha/\gamma} \left[ \frac{n^{\alpha}}{C' n^{2c} \sqrt{\log (3n^{\alpha})} } \right]^{\lceil \gamma n \rceil}.
\end{align*}
Therefore, by taking $c$ and $\gamma$ sufficiently small and making simplifications, we obtain
$$ \Prob \left( \inf_{x' \in \mathcal{N}} \inf_{\lambda' \in \mathcal{N}_M} \| (W - \lambda')x' \| \leq 5 M \beta \sqrt{n} \right) \leq n^{-c' n} $$
for $n$ sufficiently large, where $c'$ depends only on $c_0, c_1, c_2, \eps_0, p_0, K_0, \alpha$, and $M$.  Primarily, we note that $c'$ does not depend on $D$.  Returning to \eqref{eq:probSDD2}, we conclude that
$$ \Prob \left( \inf_{x \in S_D \setminus S_{D/2}} \inf_{\lambda \in [-M\sqrt{n}, M \sqrt{n}]} \| (W - \lambda) x \| = 0 \text{ and } \mathcal{B}_{W, M} \right) \leq n^{-c'n}. $$

It remains to get rid of $S_{D/2}$ in the bound above.  Since $\beta$ decreases in $D$, we can apply the previous bound to $D/2$ instead of $D$ provided that $D/2 \geq 2$.  Hence, we have
$$ \Prob \left( \inf_{x \in S_{D/2} \setminus S_{D/4}} \inf_{\lambda \in [-M\sqrt{n}, M \sqrt{n}]} \| (W - \lambda) x \| = 0 \text{ and } \mathcal{B}_{W, M} \right) \leq n^{-c'n}. $$
We shall continue in this manner for $S_{D/4} \setminus S_{D/8}$, etc.  That is, we decompose
$$ S_{n^{\alpha}} = \bigcup_{k=0}^{k_0} S_{2^{-k}n^{\alpha}} \setminus S_{2^{-k-1}n^{\alpha}}, $$
where $k_0$ is the largest integer such that $2^{-k_0} n^{\alpha} \geq c'' \sqrt{\gamma n}$ and $c''$ is the constant from Lemma \ref{lemma:LCDlower}.  (Recall from Lemma \ref{lemma:LCDlower} that $S_D$ is empty whenever $D < c'' \sqrt{\gamma n}$.)  This implies that 
$$ k_0 \leq \log_2 \left( \frac{n^{\alpha}}{c'' \sqrt{\gamma n}} \right).  $$
Therefore, the union bound yields
$$ \Prob \left( \inf_{x \in S_{n^\alpha}} \inf_{\lambda \in [-M\sqrt{n}, M \sqrt{n}]} \| (W - \lambda) x \| = 0 \text{ and } \mathcal{B}_{W, M} \right) \leq k_0 n^{-c'n} \leq n^{-c''' n} $$
for $c'''$ chosen sufficiently small.  This completes the proof.  
\end{proof}

\section{Proof of Theorem \ref{thm:vectors:random}}

We now complete the proof of Theorem \ref{thm:vectors:random}.  The proof of Theorem \ref{thm:vectors:deterministic} is deferred until the next section.  

Recall that $c_0, c_1, c_2$ were fixed in Section \ref{sec:regularizedLCD}.  Let $b$ be a random vector in $\mathbb{R}^n$, independent of $W$, which satisfies Assumption \ref{assump:random}.  Our goal is to bound above
$$ \Prob \left( v^\mathrm{T} b = 0 \relmiddle| \mathcal{B}_{W,M} \right), $$
where $v$ is a unit eigenvector of $W$.  In fact, since $\mathcal{B}_{W,M}$ is assumed to hold with probability at least $1/2$, it suffices to bound above
$$ \Prob( v^\mathrm{T} b = 0 \text{ and } \mathcal{B}_{W,M} ). $$

Fix $\gamma$ as in Theorem \ref{thm:structure}, and take $L := \max\{p_0^{-1/2}, p_1^{-1/2} \}$.  Define $\mathcal{E}$ to be the event in which every unit eigenvector $v$ satisfies
\begin{equation} \label{eq:eigenvectorcommon}
	v \in \incomp(c_0, c_1), \quad D_L(v, \gamma) > n^{\alpha}. 
\end{equation}
By Lemma \ref{lemma:complower} and Theorem \ref{thm:structure}, it follows that
$$ \Prob(\mathcal{E}^c \cap \mathcal{B}_{W,M}) \leq C \exp(-cn). $$
Thus, for any unit eigenvector $v$, we have
\begin{align*}
	\Prob( v^\mathrm{T} b = 0 \text{ and } \mathcal{B}_{W,M} ) &\leq \Prob \left( v^\mathrm{T} b = 0 \relmiddle| \mathcal{E}^c \cap \mathcal{B}_{W,M} \right) \Prob( \mathcal{E}^c \cap \mathcal{B}_{W,M}) \\
	&\qquad\qquad + \Prob \left( v^\mathrm{T} b = 0 \relmiddle| \mathcal{E} \cap \mathcal{B}_{W,M} \right) \Prob(\mathcal{E} \cap \mathcal{B}_{W,M}) \\
	&\leq C \exp(-cn) + \Prob\left(v^\mathrm{T} b = 0 \relmiddle| \mathcal{E} \cap \mathcal{B}_{W,M} \right).
\end{align*}
Therefore, it remains to bound above 
$$ \Prob\left(v^\mathrm{T} b = 0 \relmiddle| \mathcal{E} \cap \mathcal{B}_{W,M} \right). $$
Fix a realization of $W$ in which $\mathcal{E} \cap \mathcal{B}_{W,M}$ holds.  This implies that the unit eigenvector $v$ satisfies \eqref{eq:eigenvectorcommon}.  Since a realization of $W$ (and hence $v$) is fixed, the only remaining randomness comes from the random vector $b$.  Also, since $b$ is independent of $W$, the fixed realization of $W$ does not effect $b$.  Hence, for this fixed unit vector $v$, we apply Theorem \ref{thm:smallregularized} (with $J = [n]$, $t=0$, and $x = v$) to obtain
$$ \Le(v^\mathrm{T} b, 0) \leq C' L n^{-\alpha}. $$
Since this is true for any realization of $W$ in which $\mathcal{E} \cap \mathcal{B}_{W,M}$ holds, we conclude that 
$$ \Prob\left(v^\mathrm{T} b = 0 \relmiddle| \mathcal{E} \cap \mathcal{B}_{W,M} \right) \leq C' L n^{-\alpha}, $$
and the proof is complete.

\section{Proof of Theorem \ref{thm:vectors:deterministic}} \label{sec:proof:deterministic}

This section is devoted to the proof of Theorem \ref{thm:vectors:deterministic}.  Recall that $c_0, c_1, c_2$ were fixed in Section \ref{sec:regularizedLCD}.  Define
$$ \delta := \frac{1}{8} c_0 c_1^2. $$

Recall that, for a deterministic vector $b$, which is $(K,\delta)$-delocalized, at least $n - \lfloor \delta n \rfloor$ coordinates of $b$ satisfy the properties listed in Definition \ref{def:delocalized}.  Thus, we let $Q_b \subseteq [n]$ be the coordinates of $b$ that do not satisfy the properties listed in Definition \ref{def:delocalized}.  Clearly, $|Q_b| \leq \lceil \delta n \rceil$.  

We now specify a choice of $\spread(x)$ for each $x \in \incomp(c_0, c_1)$.  Indeed, given a deterministic vector $b$, which is is $(K,\delta)$-delocalized, we take $\spread(x)$ to be a subset of $Q_b^c$ with 
$$ \spread(x) = \lceil c_2 n \rceil $$
and so that \eqref{eq:incompressible} holds for all $k \in \spread(x)$.  Observe that such a choice is always possible by our choice of $\delta$.  Additionally, all of our previous results hold for any choice of $\spread(x)$, and hence hold for this choice as well.  

The choice of $\spread(x)$ here depends on the vector $b$.  This will not be an issue because there will only be a single vector $b$, which is $(K,\delta)$-delocalized, in the proof below.  

Using our choice of $\spread(x)$, we now verify the following version of Theorem \ref{thm:smallregularized}.

\begin{theorem}[Small ball probabilities via regularized LCD] \label{thm:smalldelocalized}
Let $\xi_1, \ldots, \xi_n$ be iid copies of a real random variable $\xi$ which satisfies \eqref{eq:rvassump} for some $\eps_0, p_0, K_0 > 0$.  Let $K > 0$.  Then there exist constants $C, L_0 > 0$ (depending only on $c_0, c_1, c_2, \eps_0, p_0, K_0$ and $K$) such that the following holds.  Let $x = (x_1, \ldots, x_n) \in \incomp(c_0, c_1)$ and consider the sum $S := \sum_{k=1}^n b_k x_k \xi_k$, where $b = (b_1, \ldots, b_n) \in \mathbb{R}^n$ is $(K,\delta)$-delocalized.  Then, for every $\gamma \in (0, c_2)$, $L \geq L_0$, and $ t \geq 0$, one has
$$ \Le(S, t) \leq C L \left( \frac{t}{\sqrt{\gamma}} + \frac{1}{\widehat{D}_L(x, \gamma)} \right). $$
\end{theorem}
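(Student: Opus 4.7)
The plan is to mirror the proof of Theorem \ref{thm:smallregularized} while carefully handling the rational coefficients $b_k$. First, I would use Lemma \ref{lemma:restriction} to restrict $S$ to the sum $S_I := \sum_{k \in I} b_k x_k \xi_k$ over the set $I := I(x) \subseteq \spread(x)$ that maximizes $\widehat{D}_L(x, \gamma)$. Because the intelligent choice of $\spread(x)$ forces $\spread(x) \subseteq Q_b^c$, every $b_k$ for $k \in I$ equals a nonzero rational $p_k/q_k$ with $|p_k|, |q_k| \in [1, K]$.

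Next, I would rewrite $b_k x_k \xi_k = (1/q_k)(p_k x_k)\xi_k$ and set $a_k := 1/q_k$, $\tilde{x}_k := p_k x_k$. Since $a_k^{-1} = q_k \in \mathbb{Z}$ and $|a_k| \geq 1/K$, the sum $\sum_{k \in I} a_k \tilde{x}_k \xi_k$ satisfies the hypotheses of Theorem \ref{thm:small} after factoring out $\|\tilde{x}_I\|$ so that $\tilde{x}_I/\|\tilde{x}_I\|$ is a unit vector. Applying Theorem \ref{thm:small} yields
\[
\Le(S_I, t) \leq C L \left( \frac{t}{\|\tilde{x}_I\|} + \frac{1}{D_L(\tilde{x}_I/\|\tilde{x}_I\|)} \right),
\]
with $C$ and $L_0$ now depending on $K$ through $|a_k| \geq 1/K$. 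Controlling the deterministic term is easy: since $|p_k| \geq 1$, one has $\|\tilde{x}_I\| \geq \|x_I\|$, and \eqref{eq:xIbnd} gives $\|x_I\| \geq c_1\sqrt{\gamma/2}$, so $t/\|\tilde{x}_I\| \leq (\sqrt{2}/c_1)\,t/\sqrt{\gamma}$.

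The main obstacle is comparing $D_L(\tilde{x}_I/\|\tilde{x}_I\|)$ to $D_L(x_I/\|x_I\|) = \widehat{D}_L(x,\gamma)$. I would establish an LCD comparison lemma asserting
\[
D_L(\tilde{x}_I/\|\tilde{x}_I\|) \geq c_K\, D_L(x_I/\|x_I\|)
\]
for $L \geq L_0(K)$, which would complete the proof with $C$ absorbing $1/c_K$. The heuristic is that incompressibility of $x$ together with \eqref{eq:incompressible} forces the coordinates of $x_I/\|x_I\|$ to be of comparable magnitude, so coordinate-wise multiplication by nonzero integers of absolute value at most $K$ can change the LCD by at most a $K$-dependent factor. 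Making this rigorous is the most technical step; the concrete argument would introduce $M := \operatorname{lcm}\{|p_k| : k \in I\} \leq M_K$ (a constant depending only on $K$) to clear denominators: any witness $\theta$ of the LCD condition for $\tilde{x}_I/\|\tilde{x}_I\|$ gives integers $m_k$ approximating $\theta p_k x_k/\|\tilde{x}_I\|$, so after multiplying through by $M$ one obtains integers $Mm_k/p_k$ approximating $M\theta x_k/\|\tilde{x}_I\|$ with error inflated by at most $M$. Rescaling to $\phi := M\theta\|x_I\|/\|\tilde{x}_I\|$ then produces a lattice-closeness statement for $\phi x_I/\|x_I\|$ at comparable scale, and choosing $L_0(K)$ large enough absorbs the logarithmic factors. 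Combining the four ingredients above yields the claimed bound.
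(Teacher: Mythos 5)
Your opening moves match the paper: restrict to $I := I(x)$ via Lemma \ref{lemma:restriction} and aim to invoke Theorem \ref{thm:small} using the fact that $I \subseteq \spread(x) \subseteq Q_b^c$ guarantees $b_k = p_k/q_k$ with nonzero $p_k, q_k \in [-K,K] \cap \mathbb{Z}$. The divergence is in how you factor $b_k x_k$. You write $b_k x_k = (1/q_k)(p_k x_k)$, set $a_k := 1/q_k$ and absorb $p_k$ into a new vector $\tilde x_k := p_k x_k$. This does satisfy the hypotheses $a_k^{-1} \in \mathbb{Z}$, $|a_k| \geq K^{-1}$ of Theorem \ref{thm:small}, but the LCD that theorem produces is then $D_L(\tilde x_I/\|\tilde x_I\|)$, not $D_L(x_I/\|x_I\|) = \widehat D_L(x,\gamma)$, and you are forced to invent an LCD comparison lemma to bridge the gap.

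That comparison lemma is where the argument breaks down. Your proposed proof clears denominators by multiplying a witness $\theta$ for $\tilde x_I/\|\tilde x_I\|$ by $M := \operatorname{lcm}\{|p_k|\}$, producing $\phi$ with $\dist(\phi\, x_I/\|x_I\|, \mathbb{Z}^{|I|}) \lesssim M \cdot L\sqrt{\log_+(\theta/L)}$. For $\phi$ to be a witness at the \emph{same} level $L$ one needs $M\sqrt{\log_+(\theta/L)} \leq \sqrt{\log_+(\phi/L)}$ with $\phi \asymp M\theta$; squaring, this asks $M^2 \log_+(\theta/L) \leq \log M + \log_+(\theta/L) + O(1)$, which fails whenever $\theta/L$ is larger than a fixed $K$-dependent constant. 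Increasing $L_0$ does not rescue this, because the obstruction is the prefactor $M$ on the distance, not the additive constant in the logarithm, and the theorem's conclusion involves $\widehat D_L(x,\gamma)$ at the very same $L$ you start with. So as stated the comparison $D_L(\tilde x_I/\|\tilde x_I\|) \geq c_K D_L(x_I/\|x_I\|)$ is not justified by the sketch, and I do not see a simple repair.

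The paper sidesteps this entirely. Rather than splitting $b_k$ into numerator and denominator, it keeps $x_k$ as the coefficient vector and takes $a_k := b_k/C'$ where $C'$ is (say) $\operatorname{lcm}(1,\ldots,K)$. Then $a_k^{-1} = C' q_k/p_k \in \mathbb{Z}$ because $p_k \mid C'$, and $|a_k| \geq (KC')^{-1}$, so Theorem \ref{thm:small} applies with the rational coefficients $a_k$ and the unit vector $x_I/\|x_I\|$. Crucially, Theorem \ref{thm:small} was formulated precisely so that its bound depends only on $D_L$ of the coefficient \emph{vector}, not on the $a_k$, so you get $1/D_L(x_I/\|x_I\|) = 1/\widehat D_L(x,\gamma)$ directly, with no comparison lemma. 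The moral is that the whole point of allowing rational $a_k$ in Theorem \ref{thm:small} is to leave the LCD of $x$ untouched; your factorization gives that advantage away and then has to work hard (and, as far as I can tell, unsuccessfully) to win it back.
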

\begin{proof}
Let $I := I(x)$.  By Lemma \ref{lemma:restriction}, it follows that
$$ \Le(S, t) \leq \Le (S_I, t), $$
where $S_I := \sum_{k \in I} b_k x_k \xi _k$.  Since $I \subseteq \spread(x) \subseteq Q_b^c$, it follows that the coordinates of $b_I$ satisfy the properties listed in Definition \ref{def:delocalized}.  In particular, there exist constants $C', c' > 0$, which only depend on $K$, such that
$$ \left( \frac{b_i}{C'} \right)^{-1} \in \mathbb{Z}, \quad \left| \frac{b_i}{C'} \right| \geq c', \quad i \in I. $$
So by Theorem \ref{thm:small} (taking the coefficients $a_i := b_i / C'$), we obtain
\begin{align*}
	\Le(S_I, t) &\leq CL \left( \frac{t}{C' \|x_I\|} + \frac{1}{D_L(x_I/ \|x_I\|)} \right) \\
		&\leq CL \left( \frac{t \sqrt{2}}{C' c_1 \sqrt{\gamma}} + \frac{1}{ \widehat{D}_L(x, \gamma) } \right),
\end{align*}
where the last inequality follows from \eqref{eq:xIbnd} and the definition of the regularized LCD.  The proof is complete.  
\end{proof}

We will also require the following observation.

\begin{lemma} \label{lemma:symmetry}
Let $\xi$ and $\zeta$ be real random variables.  Assume $\xi$ is a symmetric random variable which satisfies \eqref{eq:rvassump} for some $\eps_0, p_0, K_0 > 0$.  Let $W$ be an $n \times n$ Wigner matrix with atom variables $\xi$ and $\zeta$.  Let $v = (v_i)_{i=1}^n$ be a unit eigenvector of $W$, and define 
\begin{equation} \label{eq:vpsi}
	v' := \left( \psi_1 v_1, \ldots, \psi_n v_n \right)^\mathrm{T}, 
\end{equation}
where $\psi_1, \ldots, \psi_n$ are iid Bernoulli random variables, which take values $\pm 1$ with equal probability, independent of $W$.  Then, for every $\alpha > 0$, there exists $C > 0$ (depending on $\eps_0, p_0, K_0$, and $\alpha$) such that
$$ \Prob( v \cdot b = 0 \text{ and } \mathcal{B}_{W,M}) \leq \Prob( v' \cdot b = 0 \text{ and } \mathcal{B}_{W,M} ) + C n^{-\alpha}. $$
\end{lemma}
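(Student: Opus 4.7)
The core idea is to introduce the extra randomness $\psi_1,\ldots,\psi_n$ not as an artifact to analyze but as a hidden symmetry of the Wigner ensemble. Specifically, let $D := \diag(\psi_1,\ldots,\psi_n)$ with the $\psi_i$ iid Bernoulli $\pm 1$ and independent of $W$, and consider the conjugated matrix $\tilde W := D W D$. Because $D^2 = I$, the diagonal of $\tilde W$ agrees entrywise with that of $W$. For the off-diagonal entries, $\tilde w_{ij} = \psi_i \psi_j\, w_{ij}$; conditionally on the sign $\psi_i\psi_j \in \{\pm 1\}$, the symmetry of $\xi$ guarantees that $\psi_i\psi_j w_{ij}$ has the same distribution as $w_{ij}$, and the independence of the upper-triangular entries is preserved. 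Consequently $\tilde W \stackrel{d}{=} W$, even jointly with $D$ (since $D$ is independent of $W$).

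The plan proceeds in three steps. First I would fix some measurable convention that associates to each symmetric matrix with simple spectrum its $k$-th unit eigenvector (say the one whose first nonzero coordinate is positive), so that $v = v(W)$ is a well-defined function of $W$. Using $\tilde W \stackrel{d}{=} W$, the pair $(\tilde W, v(\tilde W))$ has the same distribution as $(W, v(W))$, and in particular
\[
\Prob\bigl(v(W)\cdot b = 0,\ \mathcal{B}_{W,M}\bigr) = \Prob\bigl(v(\tilde W)\cdot b = 0,\ \mathcal{B}_{\tilde W,M}\bigr),
\]
where I will use that $\|\tilde W\| = \|W\|$ so $\mathcal{B}_{\tilde W,M} = \mathcal{B}_{W,M}$. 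Second, I would relate $v(\tilde W)$ back to $v(W)$: the identity $\tilde W (Dv) = D W D D v = D W v = \lambda (Dv)$ shows that $Dv(W)$ is a unit eigenvector of $\tilde W$ with the same eigenvalue. On the event $\mathcal{S}$ that $W$ has simple spectrum (which coincides with simplicity of $\tilde W$'s spectrum), eigenvectors are determined up to sign, so $v(\tilde W) = \pm D v(W)$; the zero-set of the inner product with $b$ is unaffected by this sign, whence $v(\tilde W)\cdot b = 0 \iff (Dv(W))\cdot b = 0 \iff v'\cdot b = 0$ with $v' = Dv(W)$ as in \eqref{eq:vpsi}. Third, I would invoke Theorem \ref{thm:simple} to bound $\Prob(\mathcal{S}^c) \leq C n^{-\alpha}$ and assemble the chain
\[
\Prob\bigl(v\cdot b = 0,\ \mathcal{B}_{W,M}\bigr) \leq \Prob\bigl(v\cdot b = 0,\ \mathcal{B}_{W,M},\ \mathcal{S}\bigr) + C n^{-\alpha} = \Prob\bigl(v'\cdot b = 0,\ \mathcal{B}_{W,M},\ \mathcal{S}\bigr) + C n^{-\alpha},
\]
and then drop the $\mathcal{S}$ conditioning to obtain the desired inequality.

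The main obstacle is bookkeeping around the sign ambiguity of eigenvectors: without a spectral-simplicity event, the definition of the ``$k$-th unit eigenvector'' is not single-valued, so one cannot even write the distributional identity $v(\tilde W) \stackrel{d}{=} v(W)$ cleanly, and the equivalence $v(\tilde W) = \pm D v(W)$ would fail in higher-dimensional eigenspaces (where $D$ need not preserve the eigenspace). Theorem \ref{thm:simple} resolves this cleanly at the cost of an $n^{-\alpha}$ error, which is exactly what the statement permits. A minor subtlety to mention is that the claim needs $v$ ranging over \emph{any} measurable rule selecting unit eigenvectors, not just a single canonical one; but since the events $\{v\cdot b = 0\}$ and $\{v'\cdot b = 0\}$ are invariant under a sign flip of $v$, any such rule can be pushed through the argument above by first conditioning on $\mathcal{S}$ and then using the canonical rule as a proxy.
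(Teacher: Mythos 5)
Your proposal is correct and follows essentially the same path as the paper's proof: conjugate $W$ by $D=\diag(\psi_1,\ldots,\psi_n)$, use the symmetry of $\xi$ to conclude $DWD\stackrel{d}{=}W$, observe that $Dv$ is an eigenvector of $DWD$ for the same eigenvalue, and invoke Theorem \ref{thm:simple} to control the non-simple-spectrum event. The only cosmetic difference is that the paper sidesteps the sign ambiguity of eigenvectors by passing through the (sign-invariant) orthogonal eigenprojections $P_\lambda = vv^\mathrm{T}$ and $P'_\lambda = v'{v'}^\mathrm{T}$, whereas you fix a canonical sign convention and then use the sign-invariance of the event $\{v\cdot b=0\}$; both are standard ways to handle the same bookkeeping issue.
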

\begin{proof}
Let $v = (v_i)_{i=1}^n$ be a unit eigenvector of $W$.  Consider the matrix $W' := ( \psi_i \psi_j w_{ij})_{i,j=1}^n$, and let $v'$ be the unit vector given in \eqref{eq:vpsi}.  Since $\xi$ is symmetric, it follows that $W'$ is equal in distribution to $W$.  A simple calculation reveals that 
\begin{equation} \label{eq:eigequiv}
	Wv = \lambda v \iff W' v' = \lambda v'. 
\end{equation}
In other words, the eigenvalues of $W$ and $W'$ are the same.  In particular, $W$ has simple spectrum if and only if $W'$ has simple spectrum.  

Define $\mathcal{A}_{W,M}$ to be the event where $\mathcal{B}_{W,M}$ holds and the spectrum of $W$ is simple.  We can similarly define $\mathcal{A}_{W',M}$ to be the event where $\mathcal{B}_{W',M}$ holds and the spectrum of $W'$ is simple.  In view of \eqref{eq:eigequiv}, however, it follows that $\mathcal{A}_{W,M} = \mathcal{A}_{W',M}$.  Hence, in what follows, we will write $\mathcal{A}_{W,M}$ to refer to both events.  

For a common eigenvalue $\lambda$ of $W$ and $W'$, let $P_\lambda$ denote the orthogonal projection onto the eigenspace of $W$ corresponding to $\lambda$, and let $P'_\lambda$ be the orthogonal projection onto the eigenspace of $W'$ corresponding to $\lambda$.  Since $W$ and $W'$ have the same distribution, $P_{\lambda}$ and $P'_\lambda$ have the same distribution.  Suppose $v$ is a unit eigenvector for $W$ corresponding to the eigenvalue $\lambda$.  From \eqref{eq:eigequiv}, it follows that $Wv = \lambda v$ and $W'v' = \lambda v'$.  Moreover, on the event $\mathcal{A}_{W,M}$, $P_{\lambda} = v v^\mathrm{T}$ and $P'_{\lambda} = v' {v'}^\mathrm{T}$.  Thus, we have
\begin{align*}
	\Prob \left( v \cdot b = 0 \relmiddle| \mathcal{A}_{W,M} \right)	& = \Prob \left( P_{\lambda} b = 0  \relmiddle| \mathcal{A}_{W,M} \right) \\
	&= \Prob \left( P'_{\lambda} b = 0  \relmiddle| \mathcal{A}_{W,M} \right) \\
	&=\Prob \left( {v'} \cdot b = 0 \relmiddle| \mathcal{A}_{W,M} \right).
\end{align*}
Rewriting this equality yields
\begin{equation} \label{eq:vb0awm}
	\Prob \left( v \cdot b = 0 \text{ and } \mathcal{A}_{W,M}  \right) = \Prob \left( {v'} \cdot b = 0 \text{ and } \mathcal{A}_{W,M} \right).
\end{equation}
Hence, by Theorem \ref{thm:simple} and \eqref{eq:vb0awm}, we conclude that
\begin{align*}
	\Prob \left( v \cdot b = 0 \text{ and } \mathcal{B}_{W,M} \right) &\leq \Prob \left( v \cdot b = 0 \text{ and } \mathcal{A}_{W,M} \right) + \Prob( \text{spectrum of } W \text{ is not simple} ) \\
	&\leq \Prob \left( {v'} \cdot b = 0 \text{ and } \mathcal{A}_{W,M} \right) + C n^{-\alpha} \\
	&\leq \Prob \left( {v'} \cdot b = 0 \text{ and } \mathcal{B}_{W,M} \right) + C n^{-\alpha},
\end{align*}
as desired.  
\end{proof}

We now complete the proof of Theorem \ref{thm:vectors:deterministic}.  Let $b$ be $(K,\delta)$-delocalized.  As discussed above, this fixes our assignment of $\spread(x)$ for each $x \in \incomp(c_0, c_1)$.   Our goal is to bound above
$$ \Prob \left( v^\mathrm{T} b = 0 \relmiddle| \mathcal{B}_{W,M} \right), $$
where $v = (v_i)_{i=1}^n$ is a unit eigenvector of $W$.  In fact, since $\mathcal{B}_{W,M}$ is assumed to hold with probability at least $1/2$, it suffices to bound above
$$ \Prob( v^\mathrm{T} b = 0 \text{ and } \mathcal{B}_{W,M} ). $$
By Lemma \ref{lemma:symmetry}, it suffices to bound above
$$ \Prob \left( \sum_{k=1}^n v_k \psi_k b_k = 0 \text{ and } \mathcal{B}_{W,M} \right), $$
where $\psi_1, \ldots, \psi_n$ are iid Bernoulli random variables, which take values $\pm 1$ with equal probability, independent of $W$.  

Fix $\gamma$ as in Theorem \ref{thm:structure}.  We will apply Theorem \ref{thm:smalldelocalized} to the sum $\sum_{k=1}^n v_k \psi_k b_k$ (taking $x_k = v_k$ and $\xi_k = \psi_k$).  Let $L_0$ be the constant from Theorem \ref{thm:smalldelocalized}, and set $L := \max\{p_0^{-1/2}, L_0, 2\}$.  Define $\mathcal{E}$ to be the event in which every unit eigenvector $v$ satisfies \eqref{eq:eigenvectorcommon}.  
By Lemma \ref{lemma:complower} and Theorem \ref{thm:structure}, it follows that
$$ \Prob(\mathcal{E}^c \cap \mathcal{B}_{W,M}) \leq C \exp(-cn). $$
Thus, for any unit eigenvector $v = (v_i)_{i =1}^n$, we have
\begin{align*}
	\Prob \left( \sum_{k=1}^n v_k \psi_k b_k = 0 \text{ and } \mathcal{B}_{W,M} \right) &\leq \Prob \left( \sum_{k=1}^n v_k \psi_k b_k = 0 \relmiddle| \mathcal{E}^c \cap \mathcal{B}_{W,M} \right) \Prob( \mathcal{E}^c \cap \mathcal{B}_{W,M}) \\
	&\quad + \Prob \left( \sum_{k=1}^n v_k \psi_k b_k = 0 \relmiddle| \mathcal{E} \cap \mathcal{B}_{W,M} \right) \Prob(\mathcal{E} \cap \mathcal{B}_{W,M}) \\
	&\leq C \exp(-cn) + \Prob\left( \sum_{k=1}^n v_k \psi_k b_k = 0 \relmiddle| \mathcal{E} \cap \mathcal{B}_{W,M} \right).
\end{align*}
Therefore, it remains to bound above
$$ \Prob\left(\sum_{k=1}^n v_k \psi_k b_k = 0 \relmiddle| \mathcal{E} \cap \mathcal{B}_{W,M} \right). $$
Fix a realization of $W$ in which $\mathcal{E} \cap \mathcal{B}_{W,M}$ holds.  This implies that the unit eigenvector $v$ satisfies \eqref{eq:eigenvectorcommon}.  Since a realization of $W$ (and hence $v$) is fixed, the only remaining randomness comes from the random variables $\psi_1, \ldots, \psi_n$.  Moreover, the fixed realization of $W$ does not effect $\psi_1, \ldots, \psi_n$ since these random variables are independent of $W$.  Hence, for this fixed unit vector $v$, we apply Theorem \ref{thm:smalldelocalized} (with $x_k = v_k$ and $\xi_k = \psi_k$) to obtain
$$ \Le \left(\sum_{k=1}^n v_k \psi_k b_k, 0 \right) \leq C' L n^{-\alpha}. $$
Since this is true for any realization of $W$ in which $\mathcal{E} \cap \mathcal{B}_{W,M}$ holds, we conclude that 
$$ \Prob\left(\sum_{k=1}^n v_k \psi_k b_k = 0 \relmiddle| \mathcal{E} \cap \mathcal{B}_{W,M} \right) \leq C' L n^{-\alpha}, $$
and the proof is complete.

\section{Conclusion}
In the previous sections, we proved some results on controllability of random systems with delocalized input vectors.  In particular, we confirmed a conjecture of Godsil \cite{G} concerning controllability of graphs and showed that the relative number of controllable graphs compared to the total number of simple graphs on $n$ vertices approaches one as $n$ tends to infinity.  

Along the way, we proved some controllability results for Wigner matrices with non-degenerate symmetric entries.  This involved studying the additive structure of the eigenvectors and applying Littlewood--Offord theory.  

Many research directions are left open for future investigations including the study of controllability properties of random systems with non-localized input vectors and a version of Conjecture \ref{conj:main} for dependent structures (such as Laplacian matrices).  Another direction of future research involves the study of random systems formed from Wigner matrices whose entries are not symmetric.

\end{document}